\newcommand{\hyref}[2]{\hyperref[#2]{#1~\ref*{#2}}}
\newcommand{\lineinsertion}[3]{#1(#2#3)}
\newcommand{\tack}{\triangleright}
\definecolor{prussian}{RGB}{0,49,83}
\definecolor{hunter}{RGB}{53,94,59}
\newcommand{\OLD}[1]{}
\newcommand{\uLambda}{{\underline{\Lambda}}}
\newcommand{\smuLambda}{{\smash{\text{\resizebox{!}{1.2ex}{$\underline{\Lambda}$}}}}}
\newcommand{\SOD}{{\rm (\ddag)}}
\newcommand{\SOT}{{\rm (\dag)}}
\newcommand{\fixedwidthtabular}{ \noindent \begin{tabular}{@{} p{0.08\textwidth} @{} p{0.92\textwidth} @{} } }
\newcommand{\assumption}[2]{ \medskip\noindent
                             \begin{tabular}{@{} p{0.077\textwidth} @{} p{0.92\textwidth} @{} }
                               #1 & #2 
                             \end{tabular}\medskip }
\newcommand{\bib}[6]{{\bibitem{#2} #3, {\emph{#4},} #5#6.}}
\newcommand{\arXiv}[1]{{\href{http://arxiv.org/abs/#1}{\texttt{arXiv:#1}}}}
\newcommand{\poset}{\mathcal P}
\newcommand{\posett}{\mathcal P\!}
\newcommand{\posetaut}{\mathcal{GP}}
\newcommand{\posetautt}{\mathcal{GP}\!}
\newcommand{\TTT}{\mathsf{T}\!}  
\newcommand{\SSS}{\mathsf{S}}    
\newcommand{\Naka}{\nu}          
\newcommand{\Db}{\mathcal{D}^b}  
\renewcommand{\AA}{\mathcal{A}}
\newcommand{\CC}{\mathcal{C}}
\newcommand{\DD}{\mathcal{D}}
\newcommand{\EE}{\mathcal{E}}
\newcommand{\II}{\mathcal{I}}
\newcommand{\JJ}{\mathcal{J}}
\newcommand{\OO}{\mathcal{O}}
\renewcommand{\SS}{\mathcal{S}}
\newcommand{\XX}{\mathcal{X}}
\newcommand{\YY}{\mathcal{Y}}
\newcommand{\ZZ}{\mathcal{Z}}
\newcommand{\UU}{\mathcal{U}}
\newcommand{\IP}{\mathbb{P}}
\newcommand{\IZ}{\mathbb{Z}}
\newcommand{\IN}{\mathbb{N}}
\newcommand{\kk}{\mathbf{k}}
\newcommand{\A}[1]{A_{#1}}
\newcommand{\categoryfont}{\mathsf}
\newcommand{\modules}[1]{#1\operatorname{\categoryfont{-mod}}}                
\newcommand{\stmodules}[1]{#1\operatorname{\categoryfont{-\underline{mod}}}}  
\newcommand{\projectives}[1]{#1\!\operatorname{\categoryfont{-proj}}}           
\DeclareMathOperator{\Modules}{\categoryfont{Mod}}
\DeclareMathOperator{\add}{\categoryfont{add}}
\DeclareMathOperator{\thick}{\categoryfont{thick}}
\newcommand{\ind}{\categoryfont{ind}}
\newcommand{\Perf}{\mathsf{Perf}}
\newcommand{\Kb}{\mathsf{K}^b}  
\newcommand{\recollto}{%
        \mathrel{\vcenter{\mathsurround0pt
                \ialign{##\crcr
                        \noalign{\nointerlineskip}$\,\,\scriptstyle\leftarrow$\crcr
                        \noalign{\nointerlineskip}$\longrightarrow$\crcr
                        \noalign{\nointerlineskip}$\,\,\scriptstyle\leftarrow$\crcr
                }%
        }}%
}
\newcommand{\weakrectoup}{%
        \mathrel{\vcenter{\mathsurround0pt
                \ialign{##\crcr
                        \noalign{\nointerlineskip}$\,\,\scriptstyle\leftarrow$\crcr
                        \noalign{\nointerlineskip}$\longrightarrow$\crcr
                        \noalign{\nointerlineskip}$\phantom{\scriptstyle\leftarrow}$\crcr
                }%
        }}%
}
\newcommand{\recoll}[3]{#1 \recollto #2 \recollto #3}                
\newcommand{\wrecoll}[3]{#1 \,\raisebox{-0.2ex}{$\displaystyle\weakrectoup$}\, #2 \,\raisebox{-0.2ex}{$\displaystyle\weakrectoup$}\, #3}       
\newcommand{\jincl}{\jmath}  
\newcommand{\iincl}{\iota}   
\newcommand{\sod}[1]{{\langle #1 \rangle}}
\newcommand{\genby}[1]{{\langle #1 \rangle}}
\newcommand{\orth}{^\perp}
\newcommand{\op}{^\text{op}}
\newcommand{\lorth}{{}^\perp}
\newcommand{\inv}{^{-1}}
\newcommand{\blank}{-}        
\newcommand{\hf}{{\text{hf}}} 
\newcommand{\coloneqq}{\mathrel{\mathop:}=}
\newcommand{\eqqcolon}{=\mathrel{\mathop:}}  
\DeclareMathOperator{\Coh}{Coh}
\DeclareMathOperator{\coker}{coker}
\DeclareMathOperator{\RHom}{RHom}
\DeclareMathOperator{\Hom}{Hom}
\DeclareMathOperator{\End}{End}
\DeclareMathOperator{\Aut}{Aut}
\DeclareMathOperator{\Ext}{Ext}
\DeclareMathOperator{\id}{id}
\DeclareMathOperator{\supp}{supp}
\DeclareMathOperator{\chara}{char}
\DeclareMathOperator{\CB}{CB}
\DeclareMathOperator{\CI}{CI}
\DeclareMathOperator{\projdim}{projdim}
\DeclareMathOperator{\injdim}{injdim}
\DeclareMathOperator{\gldim}{gl{.}dim}
\newcommand{\xyinjar}{\ar@{^{(}->}}
\newcommand{\xysurjar}{\ar@{->>}}
\newcommand{\arrd}{ \ar@{-}[r] \ar@{=}[d] }
\newcommand{\isom}{ \text{{\hspace{0.48em}\raisebox{0.8ex}{${\scriptscriptstyle\sim}$}}}
                    \hspace{-0.65em}{\rightarrow}\hspace{0.3em}} 
\newcommand{\into}{\hookrightarrow}
\newcommand{\onto}{\twoheadrightarrow}
\newtheorem{theorem}{Theorem}[section]
\newtheorem*{theorem*}{Theorem}
\newtheorem{proposition}[theorem]{Proposition}
\newtheorem*{proposition*}{Proposition}
\newtheorem{lemma}[theorem]{Lemma}
\newtheorem*{lemma*}{Lemma}
\newtheorem*{conjecture}{Conjecture}
\newtheorem{corollary}[theorem]{Corollary}
\theoremstyle{definition}
\newtheorem{definition}[theorem]{Definition}
\newtheorem{remark}[theorem]{Remark}
\newtheorem*{remark*}{Remark}
\newtheorem*{question*}{Question}
\newtheoremstyle{bolded}
   {8.0pt plus 2.0pt minus 4.0pt}{8.0pt plus 2.0pt minus 4.0pt}
   {}{}  
   {\bfseries}{}%
   { }    
   {\thmname{#1}\thmnumber{\@ifnotempty{#1}{ }\@upn{#2}}%
     \thmnote{ {\bfseries(#3)}}\bfseries{.}}%
\theoremstyle{bolded}
\newtheorem*{example*}{Example}
\newtheorem*{examples*}{Examples}
\newtheorem{example}[theorem]{Example}
\newcommand{\tikzcanonicalalgebra}{
\begin{tikzpicture}[yscale=0.6,xscale=1.0]

\node (source) at (0,0)     {0};
\node (sink)   at (6,0)     {1};
\node (a1-1)   at (1,1)     {$\bullet$};
\node (a1-2)   at (2.2,1.5)   {$\bullet$};
\node (a1-3)   at (3.8,1.5)   {$\bullet$};
\node (a1-4)   at (5,1)     {$\bullet$};
\node (a2-1)   at (1.5,0)   {$\bullet$};
\node (a2-2)   at (3,0)     {$\bullet$};
\node (a2-3)   at (4.5,0)   {$\bullet$};
\node (a3-1)   at (1,-1)  {$\bullet$};
\node (a3-2)   at (2,-1.35)  {$\bullet$};
\node (a3-3)   at (3,-1.5)  {$\bullet$};
\node (a3-4)   at (4,-1.35)  {$\bullet$};
\node (a3-5)   at (5,-1)  {$\bullet$};

\draw [->] (source) edge [bend left=10]   (a1-1);
\draw [->] (a1-1)   edge [bend left=10]   (a1-2);
\draw [->] (a1-2)   edge [bend left= 7]   (a1-3);
\draw [->] (a1-3)   edge [bend left=10]   (a1-4);
\draw [->] (a1-4)   edge [bend left=10]   (sink);

\draw [->] (source) edge (a2-1);
\draw [->] (a2-1)   edge (a2-2);
\draw [->] (a2-2)   edge (a2-3);
\draw [->] (a2-3)   edge (sink);

\draw [->] (source) edge [bend right=10]  (a3-1);
\draw [->] (a3-1)   edge [bend right= 5]  (a3-2);
\draw [->] (a3-2)   edge [bend right= 3]  (a3-3);
\draw [->] (a3-3)   edge [bend right= 3]  (a3-4);
\draw [->] (a3-4)   edge [bend right= 5]  (a3-5);
\draw [->] (a3-5)   edge [bend right=10]  (sink);
\end{tikzpicture}
}
\newcommand{\tikzcircular}{
\begin{tikzpicture}[scale=0.5]
\tikzstyle{every node}=[font=\small]

\foreach \x in {1,...,7} {
 \node[shape=circle,draw,inner sep=1pt] at ({160-(\x-1)*360/7}:3) {\x};
 \draw [->] ({160-(\x-1)*360/7-10}:3) arc ({160-(\x-1)*360/7-10}:{160-\x*360/7+10}:3) node at ({160-(\x-0.3)*360/7+10}:3+0.4)  {$a_\x$};
}

\end{tikzpicture}
\hspace{1em}
\begin{tikzpicture}[scale=0.5]
\tikzstyle{every node}=[font=\small]

\foreach \x in {1,...,7} {
 \node[shape=circle,draw,inner sep=1pt] at ({160-(\x-1)*360/7}:3) {\x};
 \draw [->] ({160-(\x-1)*360/7-10}:3) arc ({160-(\x-1)*360/7-10}:{160-\x*360/7+10}:3);
}

\foreach \x in {1,...,6} {
 \draw [densely dotted] ({160-(\x-0.5)*360/7-2}:3-0.2) [bend right] edge ({160-(\x+0.5)*360/7+2}:3-0.2);
}

\end{tikzpicture}
\hspace{1em}
\begin{tikzpicture}[scale=0.5]
\tikzstyle{every node}=[font=\small]

\foreach \x in {1,...,7} {
 \node[shape=circle,draw,inner sep=1pt] at ({160-(\x-1)*360/7}:3) {\x};
 \draw [->] ({160-(\x-1)*360/7-10}:3) arc ({160-(\x-1)*360/7-10}:{160-\x*360/7+10}:3);
}

\foreach \x in {5} {
 \draw [densely dotted] ({160-(\x-0.5)*360/7-2}:3-0.2) [bend right] edge ({160-(\x+1.5)*360/7+2}:3-0.2);
}

\end{tikzpicture}
}
\newcommand{\tikzdiscreteX}{
\begin{tikzpicture}[scale=0.8]

\def\b{24}
\def\h{5}
\def\tikzsize{\scriptstyle}

\clip (\h+0.7,-0.2) rectangle (\b-3.2,\h-0.7);

\foreach \x in {3,7,...,\b} {
 \filldraw[fill=gray!20,draw=gray!20,line width=15pt] (\x+1,1) -- (\x+\h,\h) -- (\x+\h+2,\h) -- (\x+2,0) -- (\x+1,1) -- (\x+\h,\h);
}

\foreach \x in {8,12,...,\b} {
 \filldraw[fill=gray!20,draw=gray!20,line width=15pt] (\x-1,1) -- (\x-\h,\h) -- (\x-\h-2,\h) -- (\x-2,0) -- (\x-1,1) -- (\x-\h,\h);
}

\foreach \x in {11} {
 \filldraw[fill=gray!30,draw=gray!30,line width=15pt] (\x+1,1) -- (\x+\h,\h) -- (\x+\h+2,\h) -- (\x+2,0) -- (\x+1,1) -- (\x+\h,\h);
}

\foreach \x in {12} {
 \filldraw[fill=gray!30,draw=gray!30,line width=15pt] (\x-1,1) -- (\x-\h,\h) -- (\x-\h-2,\h) -- (\x-2,0) -- (\x-1,1) -- (\x-\h,\h);
}

\foreach \x in {0,...,\b} {
 \foreach \y in {0,0.5,...,\h} {
\filldraw[fill=gray!50, draw=gray!50] (\x+\y,\y) circle (2pt);
}}

\foreach \x in {7,11,...,\b} {
 \filldraw[fill=black,draw=black] (\x,0)       circle (2pt);
 \filldraw[fill=black,draw=black] (\x+1,0)     circle (2pt);
 \filldraw[fill=black,draw=black] (\x+0.5,0.5) circle (2pt);
 \filldraw[fill=black,draw=black] (\x+0.5,1.5) circle (2pt);
 \filldraw[fill=black,draw=black] (\x+2.5,3.5) circle (2pt);
}

\filldraw[fill=black,draw=black]                         (11,1)     circle      (2pt)     node[left]  {$\tikzsize E$};
\filldraw[fill=black,draw=black]                         (12,1)     circle      (2pt)     node[right] {$\tikzsize \tau\inv E$};
\filldraw[fill=black,draw=black,yshift=-2pt,xshift=-2pt] (11.5,1.5) rectangle ++(4pt,4pt) node[above] {$\tikzsize X$};
\filldraw[fill=black,draw=black,yshift=-2pt,xshift=-2pt] (15.5,1.5) rectangle ++(4pt,4pt) node[above] {$\tikzsize X[2]$};
\filldraw[fill=black,draw=black,yshift=-2pt,xshift=-2pt] ( 7.5,1.5) rectangle ++(4pt,4pt);
\filldraw[fill=black,draw=black,yshift=-2pt,xshift=-2pt] (19.5,1.5) rectangle ++(4pt,4pt);

\end{tikzpicture}
}
\newcommand{\tikzdiscreteZ}{
\begin{tikzpicture}[scale=0.8]

\def\b{20}
\def\h{5}

\clip (\h+1.2,-0.2) rectangle (\b-1.2,\h+0.2);

\foreach \x in {1,5,...,\b} {
 \filldraw[fill=gray!20,draw=gray!20,line width=15pt] (\x,0) -- (\x+\h,\h) -- (\x+\h+2,\h) -- (\x+2,0) -- (\x,0) -- (\x+\h,\h);
}

\foreach \x in {0,...,\b} {
 \foreach \y in {0,0.5,...,\h} {
\filldraw[fill=gray!50,draw=gray!50] (\x+\y,\y) circle (2pt);
}}

\foreach \x in {0,4,...,\b} {
 \foreach \y in {0,0.5,...,\h} {
\filldraw[fill=black,draw=black] (\x+\y,\y) circle (2pt);
}}

\end{tikzpicture}
}
\newcommand{\tikzexample}{
\begin{tikzpicture}[scale=0.5]
\tikzstyle{every node}  = [circle, draw, fill=black!80, inner sep=0pt, minimum width=4pt]

\pgfmathsetmacro{\dynkin} { 0.5}
\pgfmathsetmacro{\cycle}  {11}
\pgfmathsetmacro{\columns}{18}

\node  (extDynkin_top)  at  (\dynkin + 2.5, 1)  {} ;
\node  (extDynkin_1)    at  (\dynkin      , 0)    {} ;
\node  (extDynkin_2)    at  (\dynkin + 1  , 0)    {} ;
\node  (extDynkin_3)    at  (\dynkin + 2  , 0)    {} ;
\node  (extDynkin_4)    at  (\dynkin + 3  , 0)    {} ;
\node  (extDynkin_5)    at  (\dynkin + 4  , 0)    {} ;
\node  (extDynkin_6)    at  (\dynkin + 5  , 0)    {} ;

\draw  (extDynkin_top) -- (extDynkin_1) ;
\draw  (extDynkin_top) -- (extDynkin_2) ;
\draw  (extDynkin_top) -- (extDynkin_3) ;
\draw  (extDynkin_top) -- (extDynkin_4) ;
\draw  (extDynkin_top) -- (extDynkin_5) ;
\draw  (extDynkin_top) -- (extDynkin_6) ;

\node[draw=none,rectangle,fill=black!0] at (\dynkin + 2.5,-2) {$\poset(\kk\tilde D_4) = \poset(\Lambda(2,3,3))$} ;
\node[draw=none,rectangle,fill=black!0] at (\dynkin + 2.5,-3) {Examples \ref{ex:hereditary}, \ref{ex:DDA_spherical_subcategory} } ;


\node  (square_top)    at  (\cycle + 1, 1)  {} ;
\node  (square_left)   at  (\cycle    , 0)  {} ;
\node  (square_right)  at  (\cycle + 2, 0)  {} ;
\node  (square_bottom) at  (\cycle + 1,-1)  {} ;

\draw  (square_top) -- (square_left) -- (square_bottom) -- (square_right) -- (square_top) ;

\node[draw=none,rectangle,fill=black!0] at (\cycle + 1,-3) {Example \ref{ex:poset_cycle}} ;


\pgfmathsetmacro{\left}  {\columns}
\pgfmathsetmacro{\middle}{\columns + 2.5}	
\pgfmathsetmacro{\right} {\columns + 5}

\node  (height_a3)    at  (\left,   1)  {} ;
\node  (height_a2)    at  (\left,   0)  {} ;
\node  (height_a1)    at  (\left,  -1)  {} ;

\node  (height_b4)    at  (\middle, 2)  {} ;
\node  (height_b3)    at  (\middle, 1)  {} ;
\node  (height_b2)    at  (\middle, 0)  {} ;
\node  (height_b1)    at  (\middle,-1)  {} ;

\node  (height_c5)    at  (\right, 3)  {} ;
\node  (height_c4)    at  (\right, 2)  {} ;
\node  (height_c3)    at  (\right, 1)  {} ;
\node  (height_c2)    at  (\right, 0)  {} ;
\node  (height_c1)    at  (\right,-1)  {} ;

\draw (height_a1) -- (height_a2) -- (height_a3) ;
\draw (height_b1) -- (height_b2) -- (height_b3) -- (height_b4) ;
\draw (height_c1) -- (height_c2) -- (height_c3) -- (height_c4) -- (height_c5) ;

\node[draw=none,rectangle,fill=black!0] at (\left   + 0.2,-2) {$\poset(\Lambda_3)$} ;	
\node[draw=none,rectangle,fill=black!0] at (\middle + 0.2,-2) {$\poset(\Lambda_4)$} ;	
\node[draw=none,rectangle,fill=black!0] at (\right  + 0.2,-2) {$\poset(\Lambda_5)$} ;	

\node[draw=none,rectangle,fill=black!0] at (\columns + 2.5,-3) {Corollary \ref{cor:increasing_heights}} ;

\end{tikzpicture}
}
\newcommand{\tikztubes}{
\begin{tikzpicture}[scale=0.8]
\node (ht1) at (0,0)   {};
\node (ht2) at (0,1)   {};
\node (ht3) at (0,2)   {};
\node (ht4) at (0,3)   {};
\node       at (0,3.3) {$\vdots$};

\filldraw[fill=black,draw=black] (ht1) [xshift=-2pt,yshift=-2pt] rectangle ++(4pt,4pt);
\filldraw[fill=black,draw=black] (ht2) circle (2pt);
\filldraw[fill=black,draw=black] (ht3) circle (2pt);

\draw [->] (ht1) edge [bend left] (ht2);
\draw [->] (ht2) edge [bend left] (ht1);
\draw [->] (ht2) edge [bend left] (ht3);
\draw [->] (ht3) edge [bend left] (ht2);
\draw [->] (ht3) edge [bend left] (ht4);
\draw [->] (ht4) edge [bend left] (ht3);

\foreach \x in {0,...,3} {
 \node (et\x-0) at ({2*\x+3},0) {};
 \filldraw[fill=black,draw=black] (et\x-0) circle (2pt);
 \node (et\x-2) at ({2*\x+3},2) {};
 \filldraw[fill=black,draw=black] (et\x-2) [xshift=-2pt,yshift=-2pt] rectangle ++(4pt,4pt);
}
\foreach \x in {1,...,3} {
 \node (et\x-1) at ({2*\x+2},1) {};
 \filldraw[fill=black,draw=black] (et\x-1) circle (2pt);
 \node (et\x-3) at ({2*\x+2},3) {};
}
\node         at (4,3.3) {$\vdots$};
\node         at (6,3.3) {$\vdots$};
\node         at (8,3.3) {$\vdots$};
\node (et0-4) at (3,3.4) {};
\node (et3-4) at (9,3.4) {};

\draw [->] (et0-0) edge (et1-1);
\draw [->] (et1-0) edge (et2-1);
\draw [->] (et2-0) edge (et3-1);
\draw [->] (et1-1) edge (et1-0);
\draw [->] (et2-1) edge (et2-0);
\draw [->] (et3-1) edge (et3-0);

\draw [->] (et0-2) edge (et1-1);
\draw [->] (et1-2) edge (et2-1);
\draw [->] (et2-2) edge (et3-1);
\draw [->] (et1-1) edge (et1-2);
\draw [->] (et2-1) edge (et2-2);
\draw [->] (et3-1) edge (et3-2);

\draw [->] (et0-2) edge (et1-3);
\draw [->] (et1-2) edge (et2-3);
\draw [->] (et2-2) edge (et3-3);
\draw [->] (et1-3) edge (et1-2);
\draw [->] (et2-3) edge (et2-2);
\draw [->] (et3-3) edge (et3-2);


\draw [dotted] (et0-0) edge (et0-4);
\draw [dotted] (et3-0) edge (et3-4);
\end{tikzpicture}
}
\newcommand{\Top}{2}
\newcommand{\Left}{3}
\newcommand{\Right}{1}
\newcommand{\Arup}{\alpha_2}
\newcommand{\Ardown}{\alpha_1}
\title[Spherical subcategories in representation theory]{Spherical subcategories\\in representation theory}
\author{Andreas Hochenegger}
\author{Martin Kalck}
\author{David Ploog}
\begin{document}

\begin{abstract}
We introduce a new invariant for triangulated categories: the poset of spherical subcategories ordered by inclusion.
This yields several numerical invariants, like the cardinality and the height of the poset.
We explicitly describe spherical subcategories and their poset structure for derived categories of certain finite-dimensional algebras. 
\end{abstract}

\begingroup\renewcommand\thefootnote{}%
\footnote{MSC 2010: 16E35, 16G20, 18E30}

\footnote{Keywords: spherical object, spherelike object, spherical subcategory, spherelike poset, derived invariant, cluster-tilting, finite-dimensional algebra, quiver} 
\addtocounter{footnote}{-2}\endgroup
\maketitle

{\small \tableofcontents}

\thispagestyle{empty}
\vspace{-0.5cm}

\addtocontents{toc}{\protect\setcounter{tocdepth}{-1}}  
\section*{Introduction} 
\addtocontents{toc}{\protect\setcounter{tocdepth}{1}}   

\noindent
Our previous work \cite{HKP} associates a natural triangulated subcategory to an object of a $\kk$-linear triangulated category with two dimensional graded endomorphism algebra. Such an object is called \emph{spherelike} and the associated subcategory is called \emph{spherical subcategory}. It is the unique maximal subcategory in which the object becomes spherical in the sense of Seidel \& Thomas \cite{Seidel-Thomas}, who introduced them to construct symmetries predicted by Kontsevich's Homological Mirror Symmetry Conjecture.

In this paper, we extend this study in several ways.
Firstly, we observe that spherelike objects sometimes distinguish derived categories of algebras. For example, the derived categories of the following gentle 2-cycle algebras

\medskip\noindent
\resizebox{\textwidth}{!}{%
$\Lambda_1 = \kk \Big( \xymatrix@C=1.3em{
          1 \ar@<0.5ex>[r]^a \ar@<-0.5ex>[r]_b &
          2 \ar@<0.5ex>[r]^a \ar@<-0.5ex>[r]_b & 3 }\Big) / (a^2,b^2)
   , \qquad
\Lambda_2 = \kk \bigg( \raisebox{3ex}{\xymatrix@C=1.6em@R=2ex{
        & \Top \ar@/_0.5pc/[dl]_{\Ardown} \\
 \Left \ar@<-0.5ex>[rr]_\beta \ar@<0.5ex>[rr]^\gamma & & \Right \ar@/_0.5pc/[ul]_{\Arup} }} \bigg) / (\Arup \beta, \gamma \Ardown,\Ardown\Arup)
$}

\smallskip
\noindent
have different spherelike objects, hence are not equivalent; see \hyref{Subsection}{sec:derived-invariant}. All previously known derived invariants for these algebras coincide. The non-equivalence of the derived categories was already shown independently by Bobi\'nski in \cite{Bobinski} and the second author in \cite{Kalck}.
Moreover, examples in algebraic geometry suggest that these invariants carry geometric information: the behaviour under blow-ups is given in \cite[Prop.~5.2]{HKP}, and the spherical subcategories behave interestingly, see e.g\ \cite[Ex.~5.8]{HKP} for ruled surfaces over elliptic curves. Moreover, there is a rich theory of divisors $D$ in projective surfaces whose structure sheaf $\OO_D$ is spherelike; see \cite{Hochenegger-Ploog}.

Secondly, we observe that a spherical subcategory can be properly contained in another spherical subcategory. This yields a partial order on the set of spherical subcategories and indeed an invariant of triangulated categories. Several coarser invariants like the cardinality and the height of the poset are derived from this. We illustrate how these invariants capture the complexity of the category in several algebraic examples. 
 
Thirdly, we develop a general machinery to construct spherical and spherelike objects in representation theory. This builds on work of Keller \& Reiten \cite{Keller-Reiten} on cluster-tilting theory, which in turn was developed to study cluster algebras. More precisely, $d$-cluster-tilting subcategories in stably $d$-Calabi--Yau categories give rise to $(d+1)$-Calabi--Yau objects in a certain functor category. For $d=1$ or $2$ these objects turn out to be spherical, see \hyref{Proposition}{prop:cluster-sphericals}. Examples include the classical cluster categories of Buan, Marsh, Reineke, Reiten \& Todorov \cite{BMRRT} and the constructions of Gei{\ss}, Leclerc \& Schr{\"o}er, see e.g.\ \cite{Geiss-Leclerc-Schroer}.
We work out the relation to cluster-tilting theory in \hyref{Remark}{rem:cluster-tilting-connection} for the specific algebra $\Lambda_1$ mentioned above.

We study two techniques --- \emph{insertion} and \emph{tacking} --- to modify an algebra in such a way that spherical objects lose the Calabi--Yau property, i.e.\ only become properly spherelike. 
In both constructions, we extend the quiver of an algebra $\Lambda$ by a quiver $\Gamma$, yielding an algebra $\Lambda'$ and an embedding $\Db(\Lambda)\into\Db(\Lambda')$. Moreover, the emerging recollements are similar to blowing-up of varieties in the geometric situation \cite{HKP}; see \hyref{Remark}{rem:algebraic-blowup}.

If $F\in\Db(\Lambda)$ is a spherical object, under explicitly given conditions, $F$ will become properly spherelike in $\Db(\Lambda')$ and then we can compute $\Db(\Lambda')_F$, the spherical subcategory of $F$ in $\Db(\Lambda')$:

\begin{theorem*}[Theorems \ref{thm:insertion-sphericals} \& \ref{thm:tacking-sphericals}]
Let $\Lambda'$ be an algebra which is obtained from $\Lambda$ by tacking on or inserting a quiver $\Gamma$. If $F\in\Db(\Lambda)$ is a spherical object such that $F\in\Db(\Lambda')$ is properly spherelike, then there exists a subquiver $\Gamma'$ of $\Gamma$ such that
 $ \Db(\Lambda')_F = \Db(\Lambda) \oplus \Db(\kk\Gamma') $.
\end{theorem*}

Note that the resulting spherical subcategories are always derived categories of finite-dimensional algebras. While the spherical subcategories of the theorem are simpler than in the geometric case (being direct sums), this result helps computing the posets. As one instance of this, we prove that spherelike posets can get arbitrarily complicated: in \hyref{Lemma}{lem:arbitrary_posets}, we show that any finite poset occurs as a subposet of the spherelike poset of some hereditary algebra.

Moreover, we show that for Vossieck's derived-discrete algebras \cite{Vossieck}, there are spherelike objects such that the spherical subcategory is not the derived category of a finite-dimensional algebra or a projective variety; see \hyref{Proposition}{prop:spherelikes_DDA}. In \hyref{Example}{ex:non-commutative_curve}, we present a spherelike object with the same properties for the algebra $\Lambda_1$ that can be seen as coming from a non-commutative nodal curve.

We present some examples of posets that can occur in spherelike posets, drawing their Hasse diagrams. The left-hand example is the spherelike poset of both a tame hereditary algebra and a derived-discrete algebra; the middle is an instance of \hyref{Lemma}{lem:arbitrary_posets}; the right-hand example is part of a family of hereditary algebras whose spherelike posets contain the given chains.

\noindent
\tikzexample

\begin{question*} 
Is the height of the spherical poset $\poset(\DD)$ bounded?
\end{question*}

\noindent
We note that the rank of the Grothendieck group $K(\DD)$ is a bound for the height of $\poset(\DD)$ in all examples where we can compute this poset completely. One may wonder whether this gives an upper bound in general.
We note that even if true, this bound can be arbitrarily bad, as e.g.\ derived categories of Dynkin quivers always have empty spherelike posets.

\addtocontents{toc}{\protect\setcounter{tocdepth}{1}}   

\subsection*{Conventions on categories}
Throughout, we fix an algebraically closed field $\kk$ and all algebras and varieties are defined over $\kk$. Additive categories are assumed to be $\kk$-linear, and subcategories to be full.

The shift (or translation, or suspension) functor of triangulated categories is denoted by $[1]$. All triangles in triangulated categories are meant to be distinguished. Also, we will generally denote triangles abusively by $A\to B\to C$, hiding the degree increasing morphism $C\to A[1]$.
We write $\Hom^\bullet(A,B) \coloneqq \bigoplus_{i\in\IZ} \Hom(A,B[i])[-i]$ for the homomorphism complexes in triangulated categories; these have zero differential. 

All functors between triangulated categories are meant to be exact. We denote derived functors with the same symbol as the functors between abelian categories. In particular, we write $\otimes_\Lambda$ and $\Hom_\Lambda$ instead of $\smash{\otimes_\Lambda^L}$ and $\RHom_\Lambda$, respectively. Often, we suppress the index $\Lambda$ from $\Hom_\Lambda$.

\subsection*{Conventions on algebras}
Let $\Lambda$ be a finite-dimensional basic algebra. We will denote by $\Db(\Lambda)$ the bounded derived category of finitely generated left $\Lambda$-modules. We denote by $Q(\Lambda)$ the quiver of $\Lambda$, whose vertex set is
  $Q_0(\Lambda) \coloneqq \{ S \in \modules{\Lambda} \text{ simple}\}/\!\cong $
and whose arrows are given by extensions between simples. In other words, $\Lambda$ is given as the path algebra of $Q(\Lambda)$ bound by a (non-unique) ideal. For any $x\in Q_0(\Lambda)$, the associated simple, projective, injective modules are denoted by $S(x), P(x), I(x)$, respectively.
For an idempotent $e\in\Lambda$, we tersely write $\Lambda/e$ for the quotient of $\Lambda$ by the two-sided ideal $\Lambda e\Lambda$ generated by $e$.

The support of a $\Lambda$-module $M$ is
 $\{x\in Q_0(\Lambda) \mid \Hom(P(x),M)\neq0\}$.
Interpreting $M$ as a representation of $Q(\Lambda)$ bound by some ideal, $\supp(M)$ is the set of vertices $x\in Q_0(\Lambda)$ with $\dim(M_x)>0$. The support of an object in $\Db(\Lambda)$ is defined as the union of the supports of all cohomology modules.

The $\kk$-dual of a homomorphism space will be denoted $\Hom(\blank,\blank)^*$. 
If an abstract triangulated category has a Serre functor, we denote it by $\SSS$.
For a finite-dimensional algebra $\Lambda$ of finite global dimension, the Serre functor of $\Db(\Lambda)$ is given by the Nakayama functor $\Naka=\Lambda^* \otimes_\Lambda \blank$, and from \hyref{Section}{sec:quiver-constructions} onwards, we write $\Naka$ instead of $\SSS$. We also use the Auslander--Reiten (AR) translation $\tau=\Naka[-1]$ of $\Db(\Lambda)$. 

$A_n$ denotes the linearly oriented quiver of Dynkin type $A$ with $n$ vertices.

\subsection*{Conventions on recollements}
Repeatedly in this article, we will discuss decompositions of triangulated categories. For this, we use the language of recollements, see e.g.\ \cite{BBD} and \cite{Cline-Parshall-Scott} as general references. Here we follow roughly the survey \cite[\S 2]{Kalck}. Given a triangulated category $\DD$ and a full triangulated sub\-category $\CC$ of $\DD$ such that the inclusion of
 $\CC\orth \coloneqq \{ D\in\DD \mid \Hom^\bullet(\CC,D)=0 \}$ 
has both adjoints, then there is a canonical equivalence
 $\CC \isom \DD/\CC\orth$,
which gives rise to a recollement
\[ \recoll{\CC\orth}{\DD}{\CC} .  \]

We also need a slightly more general notion: if $\CC\into\DD$ has only a right adjoint then $\DD\onto\DD/\CC$ also has only a right adjoint. This right adjoint is explicitly given by the inverse of the canonical equivalence $\CC\orth\isom\DD/\CC$ and the inclusion $\CC\orth\into\DD$. We call this a \emph{weak recollement} and we will write $\wrecoll{\CC\orth}{\DD}{\CC}$.

For readers familiar with the language of semi-orthogonal decompositions, we point out that a weak recollement $\wrecoll{\CC\orth}{\DD}{\CC}$ is equivalent to a weak semi-orthogonal decomposition $\DD = \sod{\CC\orth,\CC}$. If $\CC, \CC\orth$ and $\DD$ have Serre functors, these are recollements and semi-orthogonal decompositions, respectively. See \cite[\S1.1]{Orlov-LG} for details.

\section{Spherelike objects and their spherical subcategories} \label{sec:prelim}

\noindent
We recall some notions and results from our previous article \cite{HKP}.
Let $\DD$ be a Hom-finite triangulated category, and $F$ an object of $\DD$.
\begin{itemize}[leftmargin=1.5em]
\item $F\in\DD$ is said to \emph{have a Serre dual} $\SSS F$, if the cohomological functor
 $\Hom(F,\blank)^* \colon \DD\op \to \modules{\kk}$
is represented by $\SSS F$, where $(\blank)^*$ is the duality of $\kk$-vector spaces. More precisely, $\Hom(F,\blank)\cong\Hom(\blank,\SSS F)^*$. \newline
(If $\DD$ has a Serre functor, then the two meanings of $\SSS F$ coincide. But it can happen that an object has a Serre dual object, even though there is no global Serre functor.)
\item $F$ is called \emph{$d$-Calabi--Yau} if $F[d]$ is a Serre dual for $F$.
\item $F$ is called \emph{$d$-spherelike} if $\Hom^\bullet(F,F) \cong \kk\oplus\kk[-d]$.
\item $F$ is called \emph{$d$-spherical} if it is $d$-spherelike and $d$-Calabi--Yau.
\item $F$ is called \emph{properly $d$-spherelike} if it is $d$-spherelike but not $d$-Calabi--Yau.
\end{itemize}
If the number $d$ is clear from the context, or not relevant, it will be dropped from the notation.

\begin{remark}
There is a dichotomy regarding the algebra structure of $\Hom^\bullet(F,F)$, since a two-dimensional $\kk$-algebra over an algebraically closed field $\kk$ is either isomorphic to $\kk[x]/x^2$ or to $\kk\times\kk$.

The case $\Hom^\bullet(F,F) \cong \kk\times\kk$ can only occur for $d=0$. If $E_1,E_2\in\DD$ are exceptional objects, i.e.\ $\Hom^\bullet(E_i,E_i)=\kk$, such that $\Hom^\bullet(E_1,E_2)=\Hom^\bullet(E_2,E_1)=0$, then $F\coloneqq E_1\oplus E_2$ has endomorphism algebra $\kk\times\kk$. If $\DD$ is idempotent complete, e.g.\ if $\DD$ is the derived category of an abelian category, then all spherelike objects with endomorphism algebra $\kk\times\kk$ are of this kind. We call such spherelike objects \emph{disconnected}; if $\DD$ is idempotent complete, then such an object is decomposable. Correspondingly, spherelike objects $F$ with $\Hom^\bullet(F,F) \cong \kk[x]/x^2$ are called \emph{nilpotent}. We will only be interested in nilpotent spherelike objects.
\end{remark}

\subsection{Spherelike objects as derived invariants}
\label{sec:derived-invariant}

The (non-)existence of a $d$-spherelike object in $\Db(\Lambda)$ is a triangulated invariant of an algebra $\Lambda$. Here, we show how this can be used to distinguish the derived type of algebras. Consider the following three gentle algebras:
\[ \Lambda_1  =
       \kk \Big( \xymatrix@C=1.3em{
          1 \ar@<0.5ex>[r]^a \ar@<-0.5ex>[r]_b &
          2 \ar@<0.5ex>[r]^a \ar@<-0.5ex>[r]_b & 3 }\Big) / (a^2,b^2)
   , \qquad
   \Lambda'_2 =
       \kk \bigg( \raisebox{3ex}{\xymatrix@C=1.6em@R=2ex{
        & \Top \ar@/_0.5pc/[dl]_{\Ardown} \\
 \Left \ar@<-0.5ex>[rr]_\beta \ar@<0.5ex>[rr]^\gamma & & \Right \ar@/_0.5pc/[ul]_{\Arup} }} \bigg) / (\Arup \beta, \gamma \Ardown)
\]
and $\Lambda_2 = \Lambda'_2 / (\Ardown \Arup)$.

\begin{proposition}
\label{prop:non-equivalence}
The triangulated categories $\Db(\Lambda_1)$, $\Db(\Lambda_2)$ and $\Db(\Lambda'_2)$ are pairwise non-equivalent. 
\end{proposition}

\begin{remark}
\label{rem:distinguish}
This has been shown by Bobi\'nski \cite{Bobinski} in greater generality, following work of \cite{Amiot}. That proof gives no direct anwer as to how the categories differ. In \cite{Kalck}, the second author gives an independent proof of \hyref{Proposition}{prop:non-equivalence} which illuminates the difference between the categories somewhat. This inspired the use of spherelike objects shown here.

The algebras $\Lambda_1$ and $\Lambda_2$ have equivalent Euler pairings, and the same Avella-Alaminos--Gei\ss\ invariant; see \cite[\S4]{Kalck} for details. Therefore, no previously known derived invariant distinguishes $\Db(\Lambda_1)$ and $\Db(\Lambda_2)$. The existence of $d$-spherelike objects does: using spherelike posets of \hyref{Section}{sec:poset}, we can concisely state $\posett_{-1}(\Lambda_1) = \varnothing \neq \posett_{-1}(\Lambda_2)$ and $\posett_2(\Lambda_1)  = \varnothing \neq \posett_2(\Lambda'_2)$.
\end{remark}

\begin{proof}
We start by showing that $\Db(\Lambda_2)$ contains $(-1)$-spherelike objects.
Consider the idempotent $e = e_\Top + e_\Left$ and the corresponding corner algebra
\[ e\Lambda_2 e = \kk\Big( \xymatrix@C=2.5em{ \Left \ar@<-0.5ex>[r]_{[\Arup\gamma]} & \Top \ar@<-0.5ex>[l]_{\Ardown} } \Big) / (\Ardown [\Arup\gamma], [\Arup\gamma] \Ardown) . \]
By a direct calculation, the complexes $P(\Top)\to P(\Left)$ and $P(\Left)\to P(\Top)$ are shown to be $(-1)$-spherelike objects of $\Db(e\Lambda_2 e)$. 
There is a fully faithful functor
  $\Lambda_2 e \otimes_{e\Lambda_2 e} \blank \colon \Kb(\projectives{e\Lambda_2 e}) \into \Db(\Lambda_2)$,
and hence the two complexes also give $(-1)$-spherelike objects in $\Db(\Lambda_2)$.
The same reasoning can be applied to the idempotent $e'=e_\Right+e_\Top$ and the complexes $P(\Right) \to P(\Top)$ and $P(\Top) \to P(\Right)$.

Now we claim that $\Db(\Lambda_1)$ does not contain $(-1)$-spherelike objects.
With $\Lambda_1$ of finite global dimension, we can use Happel's equivalence between the derived category and the stable module category over the repetitive algebra: $H\colon \Db(\Lambda_1) \isom \stmodules{\hat\Lambda_1}$. These modules, in turn, can be understood using the combinatorics of strings and bands. Assume that $F\in\Db(\Lambda_1)$ is a $(-1)$-spherelike object. Since band modules have self-extensions, we know that $H(F)$ is a string module. However, the $(-1)$-extension of $H(F)$ forces a 1-extension; this can be shown along the lines of the proof of \cite[Prop.~1.4]{Kalck}. 

Regarding $\Db(\Lambda'_2)$, one can observe that the Euler form of $\Lambda'_2$ is not equivalent to that of $\Lambda_1$ and $\Lambda_2$; for the latter, see \cite[Rem.~1.5]{Kalck}. In particular, the Euler form for $\Lambda_1$ cannot represent 2 (because $(x_1-x_2+x_3)^2\neq2$) whereas the one for $\Lambda'_2$ can, for example by the simple modules $S(\Right)$ and $S(\Top)$. We remark that these are even 2-spherelike and that $P(\Left)$ is 0-spherelike.
\end{proof}

\begin{remark} 
It follows from \cite[Prop.~4.7]{BPP} that every component in the Avella-Alaminos--Gei\ss\ invariant gives rise to a spherelike object. For $\Lambda_1$, $\Lambda_2$, $\Lambda'_2$, these are 3-spherelike objects. In particular, knowing all spherelike objects and the action of the Serre functor on them allows to recover the Avella-Alaminos--Gei\ss\ invariant. There is one exception, namly gentle algebras derived equivalent to path algebras of Dynkin type $A$, but these derived categories are completely characterized by the rank of the Grothendieck group and the absence of indecomposable spherelike objects.
\end{remark}

So an understanding of spherelike objects gives a way to distinguish derived categories. Unfortunately, this does not seem to be as combinatorial as the Avella-Alaminos--Gei\ss{} invariant.

\begin{conjecture}
Let $A$ and $B$ be two connected finite-dimensional gentle algebras. Then the following two statements are equivalent:
\begin{enumerate}
\item $A$ and $B$ are derived equivalent;
\item 
\begin{enumerate}
\item The Euler-forms of $A$ and $B$ are equivalent (in particular, the Grothendieck groups of $A$ and $B$ have the same rank);
\item For every $d \in \IZ$, there are bijections of the sets
\[
\{X \in \Db(A) \mid \text{$X$ is $d$-spherelike} \} 
\longleftrightarrow
\{Y \in \Db(B) \mid \text{$Y$ is $d$-spherelike} \} ,
\]
which commute with the actions of the Serre functors. 
\end{enumerate}
\end{enumerate}
\end{conjecture}

\subsection{Spherical subcategories}

The central idea of \cite{HKP} was to associate a triangulated subcategory to any $d$-spherelike object $F\in\DD$ with a Serre dual in the following fashion: in case $d\neq0$, there is a unique morphism (up to scalars) $F \to \SSS F[-d]$ and we denote its cone by $Q_F$; this object is called the \emph{asphericality} of $F$. (See the Appendix of \cite{HKP} for the case $d=0$.) 
Evidently, $Q_F=0$ if and only if $F$ is $d$-spherical. Next, the \emph{spherical subcategory} $\DD_F$ of $F$ is defined to be the left orthogonal complement of the asphericality:
\begin{equation*}
 \DD_F \coloneqq {}\orth Q_F = \{A\in\DD \mid \Hom^\bullet(A,Q_F)=0\} 
   \quad\text{with}\quad  F \to \SSS F[-d] \to Q_F .
\end{equation*}
The terminology is justified by the results of \cite[Thms.~4.4~\&~4.6]{HKP}:
\begin{itemize}
\item $F\in\DD_F$, and $F$ is a $d$-spherical object of $\DD_F$.
\item If $\UU\subseteq\DD$ is a full triangulated subcategory such that $F\in\UU$, and $F$ is a $d$-spherical object of $\UU$, then $\UU\subseteq\DD_F$.
\end{itemize}
In other words, there is a \emph{unique maximal} triangulated subcategory in which $F$ becomes spherical, and this subcategory is $\DD_F$.
By contrast, Keller, Yang \& Zhou in \cite{Keller-Yang-Zhou} study the minimal subcategory of $\DD$ in which $F$ becomes spherical --- this is the triangulated category $\sod{F}$ generated by $F$.

\begin{remark}
We mention that the functor $\SSS[-d]$ used above to define the asphericality also plays an important role in representation theory \cite{Iyama}: it is the AR-translation for $d=1$ and Iyama's $d$-AR-translation for $d>1$.
\end{remark}

We introduce two conditions, \SOT\ and \SOD, which ensure either better tractability of spherical subcategories or better behaviour of twist functors associated to spherelike objects:

\assumption{\SOT}{
$F$ is $d$-spherical in a subcategory $ \CC \subseteq \DD$ such that the inclusion $\CC \into \DD$ has a right adjoint and $\DD$ a Serre functor.
}

By \cite[Thm.~4.7]{HKP}, condition \SOT\ allows to compute the spherical subcategory without recourse to the asphericality:

\begin{theorem} \label{thm:projection_functor}
If \SOT\ holds then $F$ is $d$-spherelike as an object of $\DD$, and there is a weak recollement
$\wrecoll{\CC\orth\cap {}\orth F}{\DD_F}{\CC}$.
\end{theorem}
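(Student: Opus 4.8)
\emph{The first assertion is formal.} Since $\CC\subseteq\DD$ is full and triangulated, $\Hom^\bullet_\DD(F,F)=\Hom^\bullet_\CC(F,F)\cong\kk\oplus\kk[-d]$, so $F$ is $d$-spherelike in $\DD$; as $\DD$ carries a Serre functor $\SSS$ by \SOT, the asphericality $Q_F$ and the spherical subcategory $\DD_F={}\orth Q_F$ are defined. The plan for the second assertion is to first locate $\CC$ inside $\DD_F$, then produce the weak recollement abstractly from the formalism recalled in the conventions, and finally identify its left-hand term.

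Since $F$ is $d$-spherical in $\CC$, the maximality of $\DD_F$ recalled above gives $\CC\subseteq\DD_F$. Let $\rho\colon\DD\to\CC$ be the right adjoint of $\CC\into\DD$ provided by \SOT. I would check that its restriction to $\DD_F$ is right adjoint to $\CC\into\DD_F$: for $c\in\CC$ and $X\in\DD_F$, fullness together with $\rho X\in\CC\subseteq\DD_F$ gives $\Hom_{\DD_F}(c,X)=\Hom_\DD(c,X)\cong\Hom_\CC(c,\rho X)=\Hom_{\DD_F}(c,\rho X)$. Hence $\CC\into\DD_F$ admits a right adjoint, and the weak-recollement formalism recalled in the conventions (applied with $\DD_F$ in place of $\DD$) produces a weak recollement $\wrecoll{\CC\orth_{\DD_F}}{\DD_F}{\CC}$, whose left-hand term is the right orthogonal of $\CC$ computed inside $\DD_F$, namely $\CC\orth_{\DD_F}=\{X\in\DD_F\mid\Hom^\bullet(\CC,X)=0\}=\CC\orth\cap\DD_F$ (the last equality again by fullness).

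It then remains to prove $\CC\orth\cap\DD_F=\CC\orth\cap\lorth F$, for which I would use the defining triangle $F\to\SSS F[-d]\to Q_F$ and Serre duality in $\DD$. Fix $X\in\CC\orth$. As $F\in\CC$, this forces $\Hom^\bullet(F,X)=0$; by Serre duality $\Hom^\bullet(X,\SSS F[-d])$ is the graded dual (up to shift) of $\Hom^\bullet(F,X)$, and so vanishes as well. Applying $\Hom^\bullet(X,\blank)$ to the triangle then yields $\Hom^\bullet(X,Q_F)\cong\Hom^\bullet(X,F)[1]$, so for such $X$ the condition $X\in\DD_F$ (that is, $\Hom^\bullet(X,Q_F)=0$) is equivalent to $X\in\lorth F$ (that is, $\Hom^\bullet(X,F)=0$). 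Intersecting this equivalence with $\CC\orth$ gives $\CC\orth\cap\DD_F=\CC\orth\cap\lorth F$, as required.

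I expect the last paragraph to be the only genuine obstacle: one must feed the triangle into $\Hom^\bullet(X,\blank)$ in the correct variable and track the Serre-duality shift so that the two a priori independent vanishing conditions $\Hom^\bullet(F,X)=0$ and $\Hom^\bullet(X,F)=0$ become equivalent precisely once $X$ lies in $\CC\orth$. The remaining steps --- the spherelike property, the inclusion $\CC\subseteq\DD_F$, and the restriction of the adjoint --- are routine.
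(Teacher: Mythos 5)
Your proof is correct. Note that the paper does not actually prove this statement — it imports it verbatim from \cite[Thm.~3.7]{HKP} — but your argument (the maximality property of $\DD_F$ to get $\CC\subseteq\DD_F$, restriction of the right adjoint to exhibit the weak recollement, and the identification $\CC\orth\cap\DD_F=\CC\orth\cap\lorth F$ via the defining triangle of $Q_F$ together with Serre duality) is precisely the intended one and reconstructs the cited proof faithfully.
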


In the algebraic examples computed via the theorem in this article, the decomposition happens to be of the simplest type: $\DD_F = (\CC\orth\cap {}\orth F) \oplus \CC $.

\assumption{\SOD}{
$\DD$ has a Serre functor and $\DD_F\into\DD$ has a right adjoint.
}

To any object $F\in\DD$, one wants to associate a twist functor $\TTT_F\colon\DD\to\DD$, as the following cone:
 $\Hom^\bullet(F,\blank)\otimes F \to \id \to \TTT_F$.
Under mild assumptions on $\DD$, this is possible: $\DD$ has to be enhanced (equivalently, algebraic), idempotent complete such that $\dim\Hom^\bullet(A,B)<\infty$ for all objects $A,B\in\DD$. The existence of twist functors in this setting is explained in \cite[\S3.1]{HKP}.

This functor is an autoequivalence if and only if $F$ is spherical. If the condition \SOD\ is satisfied, $\TTT_F$ is still conservative, by \cite[Prop.~4.9]{HKP}, i.e.\ if the twist of a map is an isomorphism, then the map is an isomorphism. Even though we make no use of the twists in this article whatsoever, we check that condition \SOD\ is met in all our examples. 

\medskip

We state a simple observation which explains why there are no negatively-spherical objects in well-behaved algebraic or geometric situations. Recall that a finite-dimensional algebra $\Lambda$ is called \emph{Iwanaga--Gorenstein} if $\Lambda$ has finite injective dimension as a left and a right $\Lambda$-module.

\begin{lemma} \label{lem:negative-spheres}
Let either $\DD = \Db(\Lambda)$ for a finite-dimensional algebra $\Lambda$ of finite global dimension, or $\DD=\Db(X)$ for a smooth, projective variety $X$. If $F\in\DD$ is a $d$-Calabi--Yau object, then $d\geq0$.

Moreover, the same statements hold for $\DD=\Kb(\projectives{\Lambda})$ if $\Lambda$ is an Iwanaga--Gorenstein algebra, and for $\DD=\Perf(X)$ if $X$ is a Gorenstein variety.
\end{lemma}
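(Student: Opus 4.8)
The plan is to read the inequality off the cohomological behaviour of the Serre functor, which in all four cases is right exact for the standard $t$-structure. Throughout let $F\neq0$ and write $b\coloneqq\max\{i\mid H^i(F)\neq0\}$ for its top nonvanishing cohomological degree, computed in the ambient derived category ($\Db(\Lambda)$ in the algebra cases, $\Db(\Coh X)$ in the geometric ones). Since $F$ is $d$-Calabi--Yau it has a Serre dual with $\SSS F\cong F[d]$, and $F[d]$ has top cohomological degree $b-d$. Hence it suffices to show that $\SSS F$ has top cohomological degree \emph{at most} $b$: the resulting inequality $b-d\le b$ gives $d\ge0$.

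First I treat $\DD=\Db(\Lambda)$ with $\gldim\Lambda<\infty$, where $\SSS=\Naka=D\Lambda\otimes_\Lambda\blank$ is the left-derived Nakayama functor. Being a left-derived tensor functor, $\Naka$ is right exact: for a module $M$ one has $H^{-j}(\Naka M)=\operatorname{Tor}^\Lambda_j(D\Lambda,M)$, which vanishes for $j<0$, so $\Naka$ sends $\DD^{\le0}$ into $\DD^{\le0}$ and, commuting with shifts, does not raise the top cohomological degree of any object. Thus $\SSS F\in\DD^{\le b}$ and we are done. For $\DD=\Db(X)$ with $X$ smooth projective of dimension $n$, the Serre functor is $\SSS=(\blank)\otimes\omega_X[n]$. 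Twisting by the line bundle $\omega_X$ is exact and preserves the cohomological degree of every object ($H^i(F\otimes\omega_X)=H^i(F)\otimes\omega_X$), while the shift $[n]$ lowers the top degree by $n\ge0$; hence $\SSS F$ has top degree $b-n\le b$ (in fact one reads off $d=n$).

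For the two remaining cases the only extra point is to identify the Serre dual of a perfect object and to make sure it stays inside $\DD$. On $\DD=\Kb(\projectives{\Lambda})$, Serre duality for perfect complexes still represents $\Hom(F,\blank)^*$ by $\Naka F$, so the $d$-Calabi--Yau condition again reads $\Naka F\cong F[d]$; the Iwanaga--Gorenstein hypothesis guarantees $\Kb(\projectives{\Lambda})=\Kb(\text{inj})$, so that $\Naka$ preserves this subcategory and $\SSS F$ genuinely lies in $\DD$. Right exactness of $\Naka$ needs no finiteness assumption, so the degree count goes through verbatim. Likewise, on $\DD=\Perf(X)$ with $X$ Gorenstein the dualizing object $\omega_X$ is a line bundle; tensoring with it preserves $\Perf(X)$, represents the Serre dual as $F\otimes\omega_X[n]$, and the same computation applies.

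The argument is essentially forced once the Serre functor is pinned down, so I expect no serious obstacle in the first two cases. The one place that needs genuine care is the passage to $\Kb(\projectives{\Lambda})$ and $\Perf(X)$: there one must check that the Serre dual of $F$ is indeed computed by the Nakayama functor (respectively the line-bundle twist) and that the Iwanaga--Gorenstein (respectively Gorenstein) hypothesis is exactly what keeps $\SSS F$ inside the subcategory. Apart from that, the whole proof is the observation that a right-exact equivalence cannot move cohomology upward, so the shift $[d]$ it induces is by a non-negative amount.
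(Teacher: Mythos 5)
Your proposal is correct and follows essentially the same route as the paper: both arguments bound the top nonvanishing cohomological degree of $\SSS F$ using right-exactness of the Nakayama functor (computed via a projective resolution, i.e.\ as a left-derived tensor) respectively exactness of the line-bundle twist, and compare with the top degree of $F[d]$ to force $d\geq 0$. Your closing observation that the whole proof is ``a right-exact functor cannot move cohomology upward'' is precisely the remark the authors make after the lemma.
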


\begin{remark} 
More generally, let $L\colon\Db(\AA)\to\Db(\AA)$ be the left-derived functor of a right exact endofunctor of an abelian category $\AA$ with enough projectives. 
The proof shows that $LX \cong X[d]$ implies $d \geq 0$.
\end{remark}

\begin{proof}
Assume $d<0$ and let $F\in\DD = \Db(\Lambda)$ be a non-zero object with $\nu F\cong F[d]$. Set $m\in\IZ$ to be the maximal non-zero cohomology of $F$, i.e.\ $F\in\DD^{\leq m}(\Lambda)$ but $F\notin \DD^{<m}(\Lambda)$.
Computing $\nu F$ using a projective resolution, we see that again $\nu F \in \DD^{\leq m}(\Lambda) \subset \DD^{<m-d}(\Lambda)$. On the other hand, $F[d]\in\DD^{\leq m-d}(\Lambda)$ and $F[d]\notin\DD^{<m-d}(\Lambda)$, a contradiction.

For $\DD=\Db(X)$, the Serre functor is given by $\SSS(F) = F\otimes\omega_X[\dim X]$. Since tensoring with a line bundle is exact, we see that $\SSS(F)\cong F[d]$ if and only if $d=\dim X$ and $F\cong F\otimes\omega_X$.

The generalisation to Gorenstein algebras and varieties follows with the same proof, observing that $\Kb(\projectives{\Lambda})$ and $\Perf(X)$ have Serre functors, again given by the Nakayama functor and $\blank\otimes\omega_X[\dim X]$, respectively.
\end{proof}

These similarities notwithstanding, there is an important difference between the algebraic and the geometric cases: an algebra can have $d$-spherical objects for different $d$. One example is the derived-discrete algebra $\Lambda(1,2,0)$; see \hyref{Section}{sec:DDA}. This cannot happen in $\Db(X)$ if $X$ is a connected and smooth variety of dimension $n$: the Serre functor of $\Db(X)$ is $\omega_X\otimes(\blank)[n]$ and, crucially, the tensor product by the line bundle $\omega_X$ is an exact functor on $\Coh(X)$; hence any Calabi--Yau object in $\Db(X)$ is necessarily $n$-CY.

\begin{lemma} \label{lem:smooth_spherical_subcategories}
Let either $\DD = \Db(\Lambda)$ for a finite-dimensional algebra $\Lambda$ of finite global dimension, or $\DD=\Db(X)$ for a smooth, projective variety $X$. If $F\in\DD$ is a spherelike object such that $\DD_F$ is equivalent to $\Db(\Lambda')$ for a finite-dimensional algebra $\Lambda'$ or to $\Db(X')$ for a projective variety $X'$, then $\Lambda'$ has finite global dimension and $X'$ is smooth, respectively.
\end{lemma}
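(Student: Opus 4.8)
The plan is to characterise both target properties---finiteness of $\gldim\Lambda'$ and smoothness of $X'$---by a single homological condition on the triangulated category $\DD_F$, namely boundedness of all graded morphism spaces, and then to observe that this condition is automatically inherited from $\DD$ along the full embedding $\DD_F\subseteq\DD$. Concretely, call a $\Hom$-finite triangulated category $\mathcal T$ \emph{$\Hom$-bounded} if $\Hom^\bullet(A,B)$ is finite-dimensional (equivalently $\Ext^i(A,B)=0$ for $i\gg0$) for all $A,B\in\mathcal T$. The lemma will then follow from the chain of implications: $\DD$ is $\Hom$-bounded $\Rightarrow$ $\DD_F$ is $\Hom$-bounded $\Rightarrow$ $\Db(\Lambda')$ resp.\ $\Db(X')$ is $\Hom$-bounded $\Rightarrow$ $\gldim\Lambda'<\infty$ resp.\ $X'$ smooth.

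First I would record the two characterisations. For a finite-dimensional algebra one has $\gldim\Lambda'<\infty$ if and only if $\Db(\Lambda')$ is $\Hom$-bounded: if $g\coloneqq\gldim\Lambda'<\infty$ then $\Ext^i$ between modules vanishes for $i>g$, and a standard amplitude/truncation estimate extends this to all bounded complexes; conversely, if $\gldim\Lambda'=\infty$ then some simple module $S$ has infinite projective dimension, so $\Ext^i(S,S')\neq0$ for infinitely many $i$ and some simple $S'$ (there being only finitely many simples), whence $\Hom^\bullet(S,S')$ is infinite-dimensional. For a projective variety the analogous statement reads: $X'$ is smooth if and only if $\Db(X')$ is $\Hom$-bounded. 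The forward direction is finiteness of homological dimension on a smooth variety; for the converse, if $x\in X'$ is a singular closed point then $\OO_{X',x}$ is not regular, so by the Auslander--Buchsbaum--Serre criterion its residue field $\kk$ has infinite projective dimension over $\OO_{X',x}$, and $\Ext^i_{X'}(\OO_x,\OO_x)=\Ext^i_{\OO_{X',x}}(\kk,\kk)\neq0$ for infinitely many $i$, so the skyscraper $\OO_x$ witnesses that $\Db(X')$ is not $\Hom$-bounded.

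With these in hand the conclusion is immediate. By hypothesis $\DD$ is either $\Db(\Lambda)$ with $\gldim\Lambda<\infty$ or $\Db(X)$ with $X$ smooth projective, so by the forward directions $\DD$ is $\Hom$-bounded. Since $\DD_F={}\orth Q_F$ is a full triangulated subcategory, $\Hom^\bullet_{\DD_F}(A,B)=\Hom^\bullet_{\DD}(A,B)$ for $A,B\in\DD_F$, so $\DD_F$ is $\Hom$-bounded as well; transporting along the given triangulated equivalence $\DD_F\cong\Db(\Lambda')$ (resp.\ $\Db(X')$), which preserves graded morphism spaces, shows that $\Db(\Lambda')$ (resp.\ $\Db(X')$) is $\Hom$-bounded, and the converse directions then force $\gldim\Lambda'<\infty$ (resp.\ $X'$ smooth).

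The only genuine content lies in the converse directions of the two characterisations, i.e.\ in exhibiting an explicit pair of objects with unbounded graded morphisms when $\Lambda'$ has infinite global dimension or $X'$ is singular; the simple modules and the skyscraper $\OO_x$ do exactly this. I would deliberately avoid the tempting alternative of producing a Serre functor on $\DD_F$ and invoking that $\Db(\Lambda')$ (resp.\ $\Db(X')$) admits a Serre functor precisely when $\Lambda'$ has finite global dimension (resp.\ $X'$ is smooth): restricting the Serre functor of $\DD$ to $\DD_F$ would require the inclusion $\DD_F\into\DD$ to be admissible---essentially condition \SOD---which is not among the hypotheses and can genuinely fail for left orthogonals of a single object, so the $\Hom$-boundedness argument is both shorter and more robust.
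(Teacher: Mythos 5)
Your proof is correct and follows essentially the same route as the paper: the paper phrases your ``$\Hom$-boundedness'' as Orlov's characterisation of the perfect subcategory via homologically finite objects, notes that this property is inherited by the full triangulated subcategory $\DD_F$, and likewise witnesses its failure by simple modules (resp.\ skyscrapers at singular points, for which it cites Orlov rather than arguing via Auslander--Buchsbaum--Serre as you do). The paper even ends with the same observation you make, that only fullness of $\DD_F\subseteq\DD$ is used, so no admissibility or Serre-functor hypothesis is needed.
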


\begin{proof}
Consider the full subcategory of $\DD$ consisting of homologically finite objects
 $ \DD_\hf \coloneqq \{D\in\DD \mid s(\Hom^\bullet(D,D')) \text{ is finite for all } D'\in\DD\} $
where, for a complex $V^\bullet\in\Db(\kk)$ of vector spaces, $s(V^\bullet) \coloneqq \{i\in\IZ \mid H^i(V^\bullet)\neq0\}$.

By \cite[Prop.~1.11]{Orlov}, $\Perf(Y) \cong \Db(Y)_\hf$ for possibly singular varieties. Likewise, if $A$ is a finite-dimensional algebra of arbitrary global dimension, then $\Perf(A) = \Db(A)_\hf$. The inclusion $\Perf(A)\subseteq\Db(A)_\hf$ is easy; and if $M\in\Db(A)$ but $M\notin\Perf(A)$, then we can replace $M$ by a bounded-above, minimal projective resolution $P^\bullet\to M$. Let $S\coloneqq \bigoplus_{i\in Q_0(A)} S(i)$ be the sum of all simple $A$-modules. Then there are sufficiently negative numbers $i\ll0$ with cohomology modules $H^i(P^\bullet)\neq 0$, as $M\cong P^\bullet$ is not perfect. For each $0\neq H^i(P^\bullet)$, there is some non-zero map $H^i(P^\bullet) \to S$, giving rise to a map $P^\bullet \to S[i]$ which is not null-homotopic, hence $M\notin\Db(A)_\hf$.

With $\DD=\DD_\hf$ by our assumption on $X$ and $\Lambda$, the subcategory $\DD_F$ contains only homologically finite objects as well, hence the claim.
Note this proof only uses that $\DD_F$ is a triangulated subcategory of $\DD$.
\end{proof}

\begin{remark}
In \hyref{Propostion}{prop:spherelikes_DDA} we will meet spherical subcategories of $d$-spherelike objects with $d<0$, hence these categories are not of the form $\Db(X)$ or $\Db(\Lambda)$ by the above lemmata.
In \hyref{Section}{sec:noncomm-curve}, another spherical subcategory with this property is presented but for a $3$-spherelike object.
\end{remark}

\section{A new triangulated invariant: the spherelike poset}
\label{sec:poset}

\noindent
Let $\DD$ be a $\kk$-linear Hom-finite triangulated category. We define the sets
\begin{align*}
 \poset(\DD)    &\coloneqq \{ \DD_F \mid F\in\DD \text{ spherelike, nilpotent and has a Serre dual}\}, \\
 \posett_d(\DD) &\coloneqq \{ \DD_F \in \poset(\DD) \mid F\in\DD \text{ $d$-spherelike}\} .
\end{align*}
The set $\poset(\DD)$ is partially ordered by inclusion and called the \emph{spherelike poset} of $\DD$. The subposet $\posett_d(\DD)$ is called the \emph{$d$-spherelike poset}. We also write $\poset(\Lambda)=\poset(\Db(\Lambda))$ and $\poset(X)=\poset(\Db(X))$, if $\DD$ is the bounded derived category of an algebra $\Lambda$ or a variety $X$, respectively.
If there is a nilpotent spherical object $F\in\DD$, then $\DD=\DD_F\in\poset(\DD)$ is the maximal element.

\begin{remark}
A fully faithful functor $\iota\colon \DD\into\DD'$ maps spherelike objects to spherelike objects. However, the assignment $\DD_F \mapsto {\DD'}_{\iota(F)}$ does not induce a well-defined map of sets $\poset(\DD)\to\poset(\DD')$ in general.

An example is given by spherical objects $F_1,F_2\in\DD$ such that $\iota(F_1)\in\DD'$ remains spherical but $\iota(F_2)\in\DD'$ becomes properly spherelike. A concrete instance of this is provided by derived categories of derived-discrete algebras $\DD=\Db(\Lambda(1,2,0))$ and $\DD'=\Db(\Lambda(1,4,0))$; see \hyref{Section}{sec:DDA}.

Note that $\DD_F \mapsto \iota(\DD_F)$ will in general not work either: while any functor preserves inclusions of subcategories, it can happen that $\iota(\DD_F)\notin\poset(\DD')$. Indeed, the very same example
 $\iota\colon \DD=\Db(\Lambda(1,2,0)) \into \DD'=\Db(\Lambda(1,4,0))$
shows this: $\iota(\DD_{F_1}) = \iota(\DD_{F_2}) = \iota(\DD) \subsetneq {\DD'}_{\iota(F_1)}, {\DD'}_{\iota(F_2)}$
by \hyref{Proposition}{prop:spherelikes_DDA}.
\end{remark}

Nonetheless, the next lemma shows that spherical subcategories are well-behaved with respect to autoequivalences.

\begin{lemma} \label{lem:sphericals_under_equivalences}
Let $\varphi\colon \DD\isom\DD'$ be an equivalence of triangulated categories and let $F\in\DD$ be a spherelike object having a Serre dual.
Then $\varphi(F)\in\DD'$ is spherelike with $\DD'_{\varphi(F)} = \varphi(\DD_F)$.

In particular, the spherelike poset is an invariant of $k$-linear Hom-finite triangulated categories.
\end{lemma}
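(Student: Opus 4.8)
The plan is to exploit the definition of the spherical subcategory $\DD_F$ as the left orthogonal complement of the asphericality $Q_F$, and to check that an equivalence $\varphi$ transports every ingredient of this definition compatibly. First I would verify that $\varphi(F)$ is spherelike: since $\varphi$ is an equivalence, it induces isomorphisms $\Hom^\bullet(F,F)\cong\Hom^\bullet(\varphi(F),\varphi(F))$ as graded $\kk$-algebras, so the condition $\Hom^\bullet(F,F)\cong\kk\oplus\kk[-d]$ passes to $\varphi(F)$, and indecomposability (the $\kk[x]/x^2$ versus $\kk\times\kk$ dichotomy) is preserved too.

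The key step is to show $\varphi$ commutes with the Serre dual, up to natural isomorphism. An equivalence of Hom-finite triangulated categories intertwines Serre functors: if $\SSS$ and $\SSS'$ denote the (partially defined) Serre duals on $\DD$ and $\DD'$, then from the defining natural isomorphism $\Hom(F,\blank)\cong\Hom(\blank,\SSS F)^*$ and the fact that $\varphi$ induces isomorphisms on all Hom-spaces, one deduces that $\SSS F$ exists if and only if $\SSS'\varphi(F)$ does, and in that case $\varphi(\SSS F)\cong\SSS'\varphi(F)$. Concretely, $\Hom(\varphi(F),\blank)\cong\Hom(F,\varphi\inv(\blank))\cong\Hom(\varphi\inv(\blank),\SSS F)^*\cong\Hom(\blank,\varphi(\SSS F))^*$, which identifies $\varphi(\SSS F)$ as the Serre dual of $\varphi(F)$. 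Applying $\varphi$ to the defining triangle $F\to\SSS F[-d]\to Q_F$ and using that $\varphi$ is exact then gives a triangle $\varphi(F)\to\SSS'\varphi(F)[-d]\to\varphi(Q_F)$, whence the asphericality of $\varphi(F)$ is $Q_{\varphi(F)}\cong\varphi(Q_F)$.

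It remains to match the orthogonal complements. Because $\varphi$ is fully faithful and essentially surjective, $\Hom^\bullet(A,Q_F)=0$ holds if and only if $\Hom^\bullet(\varphi(A),\varphi(Q_F))=0$, and every object of $\DD'$ is of the form $\varphi(A)$. Hence
\[
\DD'_{\varphi(F)} = {}\orth Q_{\varphi(F)} = {}\orth\varphi(Q_F) = \varphi\bigl({}\orth Q_F\bigr) = \varphi(\DD_F),
\]
which is the desired identity. The final "in particular" clause then follows: an equivalence $\varphi$ induces a bijection between spherelike objects respecting the properties cut out in the definition of $\poset$, and by the displayed equation it carries spherical subcategories to spherical subcategories; since it preserves inclusions in both directions, it restricts to an isomorphism of posets $\poset(\DD)\cong\poset(\DD')$.

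I expect the only genuine subtlety to lie in the Serre-dual step, since the Serre dual is only partially defined and one must be careful that existence of $\SSS F$ is genuinely equivalent to existence of $\SSS'\varphi(F)$ rather than merely implied in one direction; everything else reduces to the formal observation that an equivalence induces isomorphisms on all graded Hom-spaces and is exact, so it commutes with cones and orthogonality. The main obstacle is therefore bookkeeping rather than conceptual.
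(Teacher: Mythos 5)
Your proposal is correct and follows essentially the same route as the paper: apply $\varphi$ to the defining triangle $F\to\SSS F[-d]\to Q_F$, identify $\varphi(Q_F)\cong Q_{\varphi(F)}$ using that equivalences intertwine Serre duals, and then transport the left orthogonal. The only difference is that you spell out the Serre-dual compatibility $\Hom(\varphi(F),\blank)\cong\Hom(\blank,\varphi(\SSS F))^*$ explicitly, which the paper compresses into the phrase ``equivalences commuting with Serre functors''; this is a welcome clarification rather than a divergence.
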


\begin{proof}
We start by showing that the asphericality objects behave well under equivalences: $\varphi(Q_F)\cong Q_{\varphi(F)}$, so $\varphi(\DD_F)$ is indeed in $\poset(\DD')$. To begin with, we have assumed that there exists a Serre dual object $\SSS F$ for $F$. Then $\varphi F$ has a Serre dual object as well, and in fact, $\SSS(\varphi F) \cong \varphi(\SSS F)$, as follows from
  $\Hom(\varphi F,A)^* = \Hom(F,\varphi\inv A)^* = \Hom(\varphi\inv A,\SSS F) = \Hom(A,\varphi F)$.
Now we can apply $\varphi$ to the defining triangle $F\to\SSS F[-d]\to Q_F$, to get:
\[ \xymatrix{
 \varphi(F) \ar[r] \ar@{=}[d] & \varphi(\SSS F)[-d] \ar[r] \ar[d]^\cong & \varphi(Q_F) \ar[r] \ar@{-->}[d] & \varphi(F)[1] \ar@{=}[d] \\
 \varphi(F) \ar[r]            & \SSS(\varphi F)[-d] \ar[r]             & Q_{\varphi(F)} \ar[r]              & \varphi(F)[1]            
} \]
where the vertical isomorphism and commutativity of the left-hand square follow from the above computation with $A = \varphi(\SSS F)[-d]$, and all those Hom spaces having dimension 1.
The dashed arrow exists by the axioms of triangulated categories, and is an isomorphism by the five lemma.
Hence $\DD'_{\varphi(F)} = {}\orth Q_{\varphi(F)} = {}\orth (\varphi(Q_F)) = \varphi({}\orth Q_F) = \varphi(\DD_F)$.
\end{proof}

By the lemma, an equivalence $\varphi\colon \DD \isom \DD'$ induces a well-defined map of posets
 $\poset(\DD)\to\poset(\DD'), \DD_F\mapsto {\DD'}_{\varphi(F)} = \varphi(\DD_F)$.
Hence it makes sense to look at the spherelike poset up to auto\-equivalences; we define the \emph{stable spherelike poset} of $\DD$ to be
\[ \posetaut(\DD) \coloneqq \poset(\DD) / \! \Aut(\DD) . \]
Analogously, the \emph{stable $d$-spherelike poset} is $\posetautt_d(\DD) \coloneqq \posett_d(\DD) / \! \Aut(\DD)$.

Having introduced the poset $\poset(\DD)$ as a triangulated invariant of $\DD$, we obtain further numerical invariants: the \emph{cardinality}, the \emph{height} and the \emph{width} of $\poset(\DD)$ and its variants $\posetautt_d(\DD)$ etc. All of these take values in $\IN\cup\{\infty\}$. We recall that the height of a poset is the maximum among lengths of chains, i.e.\ subsets consisting of pairwise comparable elements. Dually, the width of a poset is the maximal number of elements of antichains, i.e.\ subsets of pairwise incomparable elements.

\hyref{Lemma}{lem:arbitrary_posets} shows that spherelike posets can become very complicated, already for hereditary algebras.
\hyref{Corollary}{cor:increasing_heights} exemplifies this with a series of algebras of increasing heights, and \hyref{Example}{ex:poset_cycle} uses the method of the lemma to create a spherelike poset containing a cycle. Moreover, \hyref{Example}{ex:infinite_width} has an algebra of infinite width, even up to autoequivalences.

\begin{example}[Spherelike posets are non-additive invariants]
Let $\DD$ and $\DD'$ be two triangulated categories. For the spherelike poset of the direct sum $\DD\oplus\DD'$, two different cases occur: if neither $\DD$ nor $\DD'$ contain spherical objects, then
 $\poset(\DD\oplus\DD') \cong \poset(\DD) \amalg \poset(\DD')$.
On the other hand, if $\DD$ or $\DD'$ contain spherical objects, then the Hasse diagram of $\poset(\DD\oplus\DD')$ looks like
\[ \xymatrix@C=-0.5em@R=2ex{
      & \DD\oplus\DD' \ar@{-}[dl] \ar@{-}[dr] \\
 \poset(\DD)\setminus\{\DD\} & & \poset(\DD')\setminus\{\DD'\}
} \]
where (at most) one of the two set differences may be trivial. This kind of non-additivity is atypical for triangulated invariants such as K-theory.
\end{example}

As a first concrete example, we mention that if $\DD$ is a Calabi--Yau category, i.e.\ the Serre functor is isomorphic to a shift, then either $\poset(\DD)=\{\DD\}$ (if $\DD$ contains some spherical object) or else $\poset(\DD)=\varnothing$. This applies to cluster categories, or $\Perf(\Lambda)$ for a symmetric algebra $\Lambda$, or to $\Db(X)$ for smooth, projective varieties $X$ with trivial canonical bundle.

If $C$ is a smooth projective curve, then $\poset(C)=\posett_1(C)=\{\Db(C)\}$. The spherical poset becomes more interesting for the generalisation to weighted projective lines, see \hyref{Section}{sec:canonical_algebras}, and for surfaces, see \hyref{Example}{ex:infinite_width}.

\begin{example}[Hereditary algebras] \label{ex:hereditary}
Let $\Lambda \coloneqq \kk Q$ be the path algebra of an acyclic quiver. Then $\Lambda$ is hereditary, i.e.\ $\gldim(\Lambda)=1$ and therefore every object in $\Db(\Lambda)$ is isomorphic to its (graded) cohomology. If $Q$ is Dynkin, then every indecomposable object in $\modules{\Lambda}$ is exceptional and therefore there are no nilpotent spherelike objects in $\Db(\Lambda)$; in particular, $\poset(\Lambda)=\varnothing$.

If $Q$ is Euclidean, i.e.\ of type $\tilde A_n, \tilde D_n, \tilde E_n$, then $\Lambda$ is tame hereditary. For types $\tilde D_n$ and $\tilde E_n$, $\modules{\Lambda}$ has three non-homogeneous tubes of ranks $p,q,r$ and $p+q+r-2=n$ and assuming $p\leq q\leq r$, we have
\[ \poset(\Lambda) = \{\DD > \DD_{X_1},\ldots,\DD_{X_p},\DD_{Y_1},\ldots,\DD_{Y_q},\DD_{Z_1},\ldots,\DD_{Z_r}\}, \]
where $X_i, Y_j, Z_k$ are the indecomposable modules of quasi-lengths $p,q,r$ in the exceptional tubes, respectively. These modules are properly 1-spherelike. See \hyref{Section}{sec:canonical_algebras} for a description of the Auslander--Reiten quiver of tubes, in the setting of torsion sheaves over a weighted projective line.
 In type $\tilde A_{p,q}$, the category $\modules{\Lambda}$ has up to two non-homogeneous tubes of ranks $p$ and $q$ and $\poset(\kk\tilde A_{p,q})$ looks similar, with the spherical subcategories $\DD_{Z_k}$ omitted and special cases for $p\leq1$ or $p=q=1$.

The modules $X_i$ are in the same $\tau$-orbit, and hence identified in the poset $\posetaut(\Lambda)$. The same is true for the modules $Y_j$ and $Z_k$, respectively.  Moreover, all quasi-simple modules in homogenous tubes are 1-spherical. In particular, $\poset(\Lambda)$ and $\posetaut(\Lambda)$ have height 2, except for $Q=\tilde A_1$. In \hyref{Section}{sec:canonical_algebras}, we treat the more general Geigle--Lenzing weighted projective lines, which include the categories considered above for the weight sequences $(p,q,r)$.

If $Q$ is an $n$-Kronecker quiver with $n \geq 3$, then using the Euler form one can check that $\Db(\Lambda)$ has no spherelike objects at all. We don't have a description of the poset for general wild hereditary algebras. However, if $Q$ contains a full Euclidean subquiver, then $\poset(\Lambda)\neq\varnothing$. Moreover, there are acyclic quivers $Q$ such that the corresponding poset has height $\geq n$ for an arbitrary integer $n$, see \hyref{Corollary}{cor:increasing_heights}.
\end{example}

\section{Two quiver constructions} \label{sec:quiver-constructions}

\noindent
We present two general constructions that can be applied to quiver algebras. In one of them, we insert a linearly oriented quiver of Dynkin type $\A{n}$ at a specified vertex. In the other, we tack an arbitrary quiver without relations to an algebra at a sink. In both cases, we get recollements for the derived categories of the resulting algebras. In particular, a spherical object for the original algebra gives rise to a spherelike object for the amalgamated algebra. These constructions will turn up in our examples.

\subsection*{Idempotent calculus}
It is well-known that idempotents are sources of recollements. For easier reference, we introduce the following notion:

\begin{definition}
  Let $\Lambda$ be a finite-dimensional algebra. An idempotent element $e\in\Lambda$ is called \emph{recollant} if
\begin{itemize}
\item $\Ext^k_\Lambda(\Lambda/e,\Lambda/e) = 0$ for $k>0$;
\item $\projdim {}_\Lambda(\Lambda/e)< \infty$;
\item $\projdim (\Lambda/e)_\Lambda< \infty$ or $\projdim (\Lambda e)_{e\Lambda e}< \infty$.
\end{itemize}
\end{definition}

\begin{proposition}[{\cite[Thm.\ 1, Prop.\ 2]{Miyachi}}]  \label{prop:recollement}
Let $\Lambda$ be a finite-dimensional algebra, and let $e\in\Lambda$ be a recollant idempotent. Then there is a recollement
\[ \xymatrix@C=10ex{
\Db(\Lambda/e) \ar[r]|{\,i_*=i_!\,} &
\Db(\Lambda) \ar@/_3ex/[l]|{\,i^*\,} \ar@/^3ex/[l]|{\,i^!\,} \ar[r]|{\,j^!=j^*\,} &
\Db(e\Lambda e) \ar@/_3ex/[l]|{\,j_!\,} \ar@/^3ex/[l]|{\,j_*\,}
} \]
where the involved functors are

\vspace{0.5ex}
\scalebox{0.80}{$
  \begin{array}{@{} r@{\:}r@{\:}l @{\hspace{3em}} r@{\:}r@{\:}l @{}}
                   & i^* = & \Lambda/e \otimes_\Lambda\blank &     \jincl \coloneqq & j_! = & \Lambda e \otimes_{e\Lambda e}\blank      \\
  \iincl \coloneqq & i_* = & \Hom_{\Lambda/e}(\Lambda/e,\blank) = \Lambda/e \otimes_{\Lambda/e} \blank = i_! &
     \pi \coloneqq & j^! = & \Hom_{\Lambda}(\Lambda e,\blank)   =  e \Lambda \otimes_{\Lambda} \blank = j^*                             \\      
                   & i^! = & \Hom_\Lambda(\Lambda/e,\blank)    &                      & j_* = & \Hom_{e\Lambda e}(e \Lambda,\blank)     \\

\end{array}$}
\end{proposition}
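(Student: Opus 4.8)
\emph{Proof plan.} The statement is a derived enhancement of the classical Cline--Parshall--Scott recollement of module categories (in the form established by Miyachi), so the plan is to lift the evident module-level functors to the bounded derived categories and then verify the recollement axioms one at a time.

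First I would record the two adjoint triples. On the open side, the bimodules ${}_\Lambda(\Lambda e)_{e\Lambda e}$ and ${}_{e\Lambda e}(e\Lambda)_\Lambda$ furnish, via the usual tensor--Hom formulas, the adjunctions $j_!=\Lambda e\otimes_{e\Lambda e}\blank \dashv j^*=e\Lambda\otimes_\Lambda\blank \dashv j_*=\Hom_{e\Lambda e}(e\Lambda,\blank)$, and the identification $j^!\cong j^*$ (that is, $\Hom_\Lambda(\Lambda e,\blank)\cong e\Lambda\otimes_\Lambda\blank$) holds because $\Lambda e$ is a finitely generated projective left $\Lambda$-module with $e\Lambda\cong\Hom_\Lambda(\Lambda e,\Lambda)$. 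On the closed side, restriction of scalars $i_*$ along $\Lambda\onto\Lambda/e$ is exact and carries the extension/coextension adjunctions $i^*=\Lambda/e\otimes_\Lambda\blank \dashv i_* \dashv i^!=\Hom_\Lambda(\Lambda/e,\blank)$. At this stage the three projective-dimension hypotheses enter, and their role is purely to guarantee that the one-sided derived functors preserve boundedness: $\projdim{}_\Lambda(\Lambda/e)<\infty$ makes $i^!=\RHom_\Lambda(\Lambda/e,\blank)$ land in $\Db(\Lambda/e)$, while the third bullet --- in either of its two forms, which suffices by a left--right (opposite-algebra) symmetry --- supplies the corresponding boundedness for $i^*$, $j_!$ and $j_*$; note that $i_*$ and $j^*$ are exact, hence unconditionally defined.

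Next I would settle orthogonality and full faithfulness. The vanishing $j^*i_*=0$ is immediate: since $e\in\Lambda e\Lambda$, the element $e$ acts as zero on every $\Lambda/e$-module, so $j^*i_*M=e\cdot M=0$, and this persists after deriving as $j^*$ is exact. Full faithfulness of $j_!$ and $j_*$ is formal from $e\Lambda\otimes_\Lambda\Lambda e\cong e\Lambda e$ (no higher Tor, as $\Lambda e$ is projective), which yields $j^*j_!\cong\id$ and $j^*j_*\cong\id$. The essential point is full faithfulness of $i_*$, which I would obtain by checking on the generator $\Lambda/e$ of $\Db(\Lambda/e)$: one has $\Hom_{\Db(\Lambda)}(i_*\Lambda/e,\, i_*\Lambda/e[k])=\Ext^k_\Lambda(\Lambda/e,\Lambda/e)$, and the hypothesis $\Ext^k_\Lambda(\Lambda/e,\Lambda/e)=0$ for $k>0$ matches this with $\Hom_{\Db(\Lambda/e)}(\Lambda/e,\Lambda/e[k])$; a standard thick-subcategory (d\'evissage) argument then propagates full faithfulness from the generator to all of $\Db(\Lambda/e)$.

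Finally I would glue the two sides. The inclusion $\im i_*\subseteq\ker j^*$ is exactly $j^*i_*=0$. For the reverse, any $X$ with $j^*X=0$ has $e\cdot H^\bullet(X)=0$, so all cohomology modules of $X$ are $\Lambda/e$-modules; by its truncation triangles $X$ then lies in the triangulated subcategory generated by $\Lambda/e$-modules, which --- now that $i_*$ is fully faithful, so that its essential image is triangulated --- is precisely $\im i_*$. Hence $i_*$ is an equivalence $\Db(\Lambda/e)\isom\ker j^*$, and the defining gluing triangles follow formally from the adjunctions together with this identification. I expect the genuine obstacle to be exactly this equivalence $\Db(\Lambda/e)\isom\ker j^*$, specifically the full faithfulness of $i_*$: that $\Db(\Lambda/e)\to\Db(\Lambda)$ maps \emph{isomorphically} onto the complexes supported off $e$ is a real condition, failing without the $\Ext$-vanishing (which is what forces $\Ext$ over $\Lambda/e$ to agree with $\Ext$ over $\Lambda$). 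The remaining delicate point is the bookkeeping --- including the left--right symmetry behind the ``or'' in the third hypothesis --- needed to see that all six derived functors genuinely preserve the bounded derived categories.
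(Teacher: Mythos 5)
The paper offers no proof of this proposition --- it is quoted from Miyachi with a citation --- so there is no internal argument to compare against, and I am judging your plan on its own terms. Most of it is sound and standard: the two adjoint triples coming from the bimodules ${}_\Lambda(\Lambda e)_{e\Lambda e}$ and ${}_{e\Lambda e}(e\Lambda)_\Lambda$, the exactness of $j^*$ and $i_*$, the vanishing $j^*i_*=0$, full faithfulness of $j_!$ and $j_*$ from $e\Lambda\otimes_\Lambda\Lambda e\cong e\Lambda e$ with no higher Tor, and the identification of $\ker j^*$ with the complexes whose cohomology lies in $\modules{\Lambda/e}$ via truncation triangles are all correct.

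The gap sits exactly at the step you yourself single out as the essential point: full faithfulness of $i_*$. Your d\'evissage starts from the single object $\Lambda/e$, but $\Lambda/e$ generates $\Db(\Lambda/e)$ under thick closure only when $\gldim(\Lambda/e)<\infty$; in general $\thick(\Lambda/e)=\Kb(\projectives{\Lambda/e})\subsetneq\Db(\Lambda/e)$, and nothing in the hypotheses bounds $\gldim(\Lambda/e)$. What is actually needed is $\Ext^k_{\Lambda/e}(M,N)\cong\Ext^k_\Lambda(M,N)$ for \emph{all} $\Lambda/e$-modules $M,N$ and all $k$; granted that, d\'evissage via truncation triangles (from modules, which do generate $\Db(\Lambda/e)$) finishes the job. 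The standard route to it is: first upgrade the hypothesis to $\Ext^k_\Lambda(\Lambda/e,N)=0$ for all $k>0$ and every $\Lambda/e$-module $N$, by embedding $\Ext^k_\Lambda(\Lambda/e,N)\into\Ext^{k+j}_\Lambda(\Lambda/e,\Omega^j N)$ along $\Lambda/e$-syzygy sequences and letting $k+j$ exceed $\projdim{}_\Lambda(\Lambda/e)$; then any $\Lambda/e$-projective resolution of $M$ is $\Ext_\Lambda(\blank,N)$-acyclic and computes both Ext groups at once. Note this uses $\projdim{}_\Lambda(\Lambda/e)<\infty$ in an essential way for \emph{full faithfulness}, contradicting your assertion that the projective-dimension hypotheses serve ``purely to guarantee boundedness''. (A smaller quibble: the ``or'' in the third bullet is not explained by an opposite-algebra symmetry --- the two alternatives concern different modules over different rings; the reason either one suffices is that once five of the six functors are known to preserve $\Db$, the gluing triangles force the sixth to do so as well.)
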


We use the following notation throughout the article:
\begin{align*}
\jincl \coloneqq
\Lambda e\otimes_{e\Lambda e}(\blank)     & \colon \Db(e\Lambda e) \into \Db(\Lambda) , \\
\iincl \coloneqq
\Hom_{\Lambda/e}(\Lambda/e,\blank)        & \colon \Db(\Lambda/e) \into \Db(\Lambda) , \\
\pi \coloneqq
\Hom_{\Lambda}(\Lambda e,\blank)
 =  e \Lambda \otimes_{\Lambda} (\blank)  & \colon \Db(\Lambda) \to \Db(e\Lambda e) .
\end{align*}

\begin{remark}
The recollement exists in greater generality, if we replace the left-hand category $\Db(\Lambda/e)$ by $\smash{\Db_{\Lambda/e}(\Lambda)}$, i.e.\ the full subcategory of $\Db(\Lambda)$ whose objects have cohomology in $\modules{\Lambda/e}$; see \cite[Sec.~2]{Cline-Parshall-Scott}.
More precisely, the objects in $\Db_{\Lambda/e}(\Lambda)$ are supported off $e$ which means that
\[ M\in\Db_{\Lambda/e}(\Lambda) \iff \supp(e)\cap\supp(M)=\varnothing , \]
where $\supp(e) \coloneqq \{x \in Q_0(\Lambda) \mid e \cdot S(x) = S(x) \}$.

Finally, we want to note that $\smash{\Db_{\Lambda/e}(\Lambda) \cong \thick_\Lambda(\modules {\Lambda/e})}$, so in the above situation the image of $\iincl$ is the subcategory $\thick_\Lambda(\modules {\Lambda/e})$. On the other hand, the image of $\jincl$ is $\thick_\Lambda(\Lambda e)$.
\end{remark}

\begin{proposition}[{\cite[Lem.~2.1]{Wiedemann}}]
\label{prop:recoll-finite-gldim}
Let $\Lambda$ be a finite-dimensional algebra, and let $e\in\Lambda$ be a recollant idempotent.
Then $\Lambda$ has finite global dimension if and only if $\Lambda/e$ and $e\Lambda e$ have.
\end{proposition}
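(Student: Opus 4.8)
The plan is to recast finiteness of global dimension categorically and then transport it across the recollement. Recall that a finite-dimensional algebra $A$ satisfies $\gldim A<\infty$ if and only if $\Db(A)=\Perf(A)$, where $\Perf(A)=\Kb(\projectives{A})=\thick_A(A)$ is the thick subcategory generated by the free module $A$. The mechanism of the proof is the elementary observation that a triangulated functor $F\colon\Db(A)\to\Db(A')$ satisfies $F(\Perf A)\subseteq\Perf(A')$ whenever $F(A)\in\Perf(A')$, since then $F\big(\thick_A(A)\big)\subseteq\thick_{A'}\big(F(A)\big)\subseteq\Perf(A')$.

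Write $C=\Lambda/e$ and $B=e\Lambda e$. I would first record how the four relevant adjoint functors of the recollement act on free modules: $i^*(\Lambda)=C$, $j_!(B)=\Lambda e$, $i_*(C)=\Lambda/e$ and $j^*(\Lambda)=e\Lambda$. The first is the free $C$-module and the second is a projective $\Lambda$-module, so $i^*$ and $j_!$ preserve perfect complexes unconditionally. The module $\Lambda/e$ is perfect over $\Lambda$ exactly by the hypothesis $\projdim_\Lambda(\Lambda/e)<\infty$ of \hyref{Proposition}{prop:recollement}, so $i_*$ preserves perfect complexes as well. The only subtle functor is $j^*$, which preserves perfect complexes if and only if $e\Lambda\in\Perf(B)$, i.e. if and only if $e\Lambda$ has finite projective dimension as a left $B$-module.

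This last point is the main obstacle, and I would dispose of it by observing that the hypotheses already force $\projdim_B(e\Lambda)<\infty$. Indeed, the recollement of \hyref{Proposition}{prop:recollement} supplies the functor $j_*=\Hom_{B}(e\Lambda,\blank)\colon\Db(B)\to\Db(\Lambda)$; since its target is the \emph{bounded} derived category, $\Hom_B(e\Lambda,N)$ must be bounded for every $N\in\Db(B)$. Testing against the simple $B$-modules, this means $\Ext^i_B(e\Lambda,\blank)$ vanishes for $i\gg0$, which is precisely $\projdim_B(e\Lambda)<\infty$. Hence $j^*$ too preserves perfect complexes. (This is the point at which the third hypothesis of \hyref{Proposition}{prop:recollement}, guaranteeing that $j_*$ lands in the bounded derived category, is used.)

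With all four functors preserving perfect complexes, both implications follow from the gluing triangles and the full faithfulness of $i_*$ and $j_!$. If $\gldim C<\infty$ and $\gldim B<\infty$, then for any $M\in\Db(\Lambda)$ the triangle $j_!j^*M\to M\to i_*i^*M$ presents $M$ as an extension of $i_*i^*M\in i_*(\Perf C)\subseteq\Perf(\Lambda)$ by $j_!j^*M\in j_!(\Perf B)\subseteq\Perf(\Lambda)$; thus $M\in\Perf(\Lambda)$ and $\gldim\Lambda<\infty$. Conversely, suppose $\gldim\Lambda<\infty$, so that $\Db(\Lambda)=\Perf(\Lambda)$. For $M\in\Db(C)$ the isomorphism $M\cong i^*i_*M$ lands in $i^*(\Perf\Lambda)\subseteq\Perf(C)$, giving $\gldim C<\infty$; and for $N\in\Db(B)$ the isomorphism $N\cong j^*j_!N$ lands in $j^*(\Perf\Lambda)\subseteq\Perf(B)$, giving $\gldim B<\infty$. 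I expect the verification that $j^*$ preserves perfect complexes (equivalently, that $\projdim_B(e\Lambda)<\infty$) to be the one nonformal ingredient; the remainder is bookkeeping with the six recollement functors and the two gluing triangles.
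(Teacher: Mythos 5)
The paper does not actually prove this statement --- it is imported verbatim from Wiedemann \cite{Wiedemann} --- so there is no internal proof to compare against, and your argument has to stand on its own. It does. The reduction ``$\gldim A<\infty$ iff $\Db(A)=\Perf(A)=\thick_A(A)$, then check that all six recollement functors preserve perfect complexes and use the two gluing triangles together with $i^*i_*\cong\id$, $j^*j_!\cong\id$'' is the standard proof of this lemma, and you have correctly isolated the only non-formal point: that $j^*$ preserves perfects, i.e.\ that $e\Lambda$ is perfect as a left $e\Lambda e$-module. Your extraction of this from the setup is legitimate: \hyref{Proposition}{prop:recollement} asserts that $j_*=\Hom_{e\Lambda e}(e\Lambda,\blank)$ is a functor between \emph{bounded} derived categories, so $\Ext^i_{e\Lambda e}(e\Lambda,S)$ vanishes for $i\gg0$ uniformly over the finitely many simple $e\Lambda e$-modules $S$, and since $e\Lambda$ is finite-dimensional its minimal projective resolution then terminates. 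One small caveat of phrasing: this finiteness is not ``forced by the three listed hypotheses'' in isolation but by the conclusion of Miyachi's theorem (the existence of $j_*$ at the bounded level), which you are entitled to invoke because the proposition assumes that recollement; if one wanted a proof from the raw hypotheses alone, one would first have to reprove that part of Miyachi's result. The remaining bookkeeping --- $i^*(\Lambda)=\Lambda/e$, $j_!(e\Lambda e)=\Lambda e$ projective, $i_*(\Lambda/e)$ perfect precisely by the hypothesis $\projdim{}_\Lambda(\Lambda/e)<\infty$, and the correct orientation of the triangle $j_!j^*M\to M\to i_*i^*M$ --- is all accurate, so the proposal is a complete and correct substitute for the citation.
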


Recall some basic homological notions: an object $E$ of a triangulated category $\DD$ is \emph{exceptional} if $\Hom^\bullet(E,E)=\kk$. A sequence of objects $E_1,\ldots,E_r$ is an \emph{exceptional sequence} if each $E_i$ is exceptional and $\Hom^\bullet(E_i,E_j)=0$ if $i>j$. The exceptional sequence is \emph{full} if it generates $\DD$; this is denoted by $\DD=\sod{E_1,\ldots,E_r}$. If the sequence is not full, then $\sod{E_1,\ldots,E_r}$ denotes the thick subcategory of $\DD$ generated by it.
The next fact is well-known.

\begin{lemma} \label{lem:exseq}
Let $\Lambda$ be a finite-dimensional algebra such that the quiver $Q(\Lambda)$ has no oriented cycles.
Then there is an ordering of $Q_0(\Lambda)=\{x_1,\ldots,x_t\}$ such that there is a full exceptional sequence $\Db(\Lambda) = \sod{ S(x_1),\ldots,S(x_t) }$.
\end{lemma}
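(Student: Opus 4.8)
The plan is to exhibit the simple modules, in a suitable order, as a full exceptional collection. Write $\Lambda \cong \kk Q/I$ with $Q = Q(\Lambda)$ and $I$ an admissible ideal, as in the conventions on algebras; admissibility is what guarantees that the vertices of $Q$ are the simples and the arrows record $\Ext^1$ between them. Since $Q$ has no oriented cycles, its vertices admit a topological ordering $Q_0(\Lambda) = \{x_1, \ldots, x_t\}$ in which every arrow $x_a \to x_b$ satisfies $a < b$; consequently every directed path of positive length from $x_a$ to $x_b$ forces $a < b$. I claim this ordering works.

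The homological input I would establish is that higher extensions are controlled by directed paths: if $\Ext^n_\Lambda(S(x), S(y)) \neq 0$ for some $n \geq 1$, then there is a directed path from $x$ to $y$ in $Q$ of positive length (in particular $x \neq y$). To see this I would track the supports of the syzygies $\Omega^n S(x)$. For $n=1$ one has $\Omega S(x) = \operatorname{rad} P(x) = (\operatorname{rad}\Lambda)\, e_x$, whose support consists of the vertices $y$ with $e_y (\operatorname{rad}\Lambda) e_x \neq 0$, i.e.\ those admitting a positive-length path $x \to \cdots \to y$. Inductively, since $\Omega^{n+1} S(x) \subseteq \operatorname{rad} P(\Omega^n S(x))$ and the projective cover of a module is supported exactly on the support of its top, the support of $\Omega^n S(x)$ lies among the vertices reachable from $x$ by a path of length $\geq n$. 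As $\Ext^n_\Lambda(S(x), S(y)) \neq 0$ means that $S(y)$ occurs in the top of $\Omega^n S(x)$, such a $y$ must be reachable from $x$, which proves the claim. This path-control statement is the main technical point, and the place where conventions (left modules, and the orientation of arrows relative to $\Ext^1$) have to be pinned down carefully.

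With this in hand the two defining properties follow immediately. For exceptionality of each $S(x)$: $\End_\Lambda(S(x)) = \kk$ because $\Lambda$ is basic and $\kk$ is algebraically closed, while $\Ext^n_\Lambda(S(x), S(x)) = 0$ for all $n \geq 1$, since otherwise the claim would produce a positive-length path from $x$ to $x$, i.e.\ an oriented cycle, contradicting acyclicity; hence $\Hom^\bullet(S(x), S(x)) \cong \kk$. For semiorthogonality: if $i > j$, then $\Hom(S(x_i), S(x_j)) = 0$ because the simples are distinct, and $\Ext^n(S(x_i), S(x_j)) = 0$ for $n \geq 1$ because a positive-length path $x_i \to \cdots \to x_j$ would force $i < j$; thus $\Hom^\bullet(S(x_i), S(x_j)) = 0$.

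It remains to check fullness. Every module in $\modules{\Lambda}$ has a finite composition series and therefore lies in $\thick_\Lambda(S(x_1), \ldots, S(x_t))$, and every object of $\Db(\Lambda)$ is built by finitely many triangles from shifts of its cohomology modules; hence the simples generate $\Db(\Lambda)$ as a triangulated category. Combining generation with exceptionality and semiorthogonality gives the full exceptional collection $\Db(\Lambda) = \sod{S(x_1), \ldots, S(x_t)}$. (Should the chosen orientation of arrows relative to $\Ext^1$ make the nonvanishing run the other way, one simply reverses the ordering to match the convention used for $\sod{-}$.)
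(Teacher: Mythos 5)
Your proof is correct and follows essentially the same route as the paper's: order the simples topologically using acyclicity, observe they are exceptional and semiorthogonal, and note that they generate. The only difference is that you supply a detailed syzygy-support argument for the key fact that $\Ext^k(S(x),S(y))\neq 0$ forces a directed path $x\to\cdots\to y$, which the paper simply records as well-known.
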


\begin{proof}
It is well-known that $\Hom(S(x),S(y)) \cong \delta_{xy} \cdot \kk$.
Moreover for $k>0$, $\Ext^k(S(x),S(y))$ vanishes if there is no path $x \to \cdots \to y$. Since there are no oriented cycles in $Q(\Lambda)$, we can order the simples to form an exceptional sequence. It is clear that they generate the whole category, so it is a full exceptional sequence.
\end{proof}

\begin{proposition}
\label{prop:idempotent-sod}
Let $\Lambda$ be a finite-dimensional algebra having finite global dimension with a recollant idempotent $e\in\Lambda$, such that the quiver $Q(\Lambda/e)$ has no oriented cycles.
Let $F \in \Db(e\Lambda e)$ be a $d$-spherical object.
If the simple modules of the full exceptional sequence
  $\Db(\Lambda/e) = \sod{ S(x_1),\ldots,S(x_t) }$
satisfy
\[ 
\Hom^\bullet_{e\Lambda e}(F,\pi \Naka S(x_i)) = 0 \text{ for } i=1,\ldots,t-1, 
\]
then the following dichotomy holds:
\begin{enumerate}
\item If $\Hom^\bullet_{e\Lambda e}(F,\pi \Naka S(x_t)) = 0$, then
      $\jincl F \in \Db(\Lambda)$ is $d$-spherical.
\item If  $\Hom^\bullet_{e\Lambda e}(F,\pi \Naka S(x_t)) \neq 0$, 
      then $\jincl F \in \Db(\Lambda)$ is properly $d$-spherelike, 
      and its spherical subcategory $\Db(\Lambda)_{\jincl F}$ has the
      weak recollement
\[  
\wrecoll{ \sod{ S(x_1),\ldots,S(x_{t-1}) } }
        { \Db(\Lambda)_{\jincl F} }
        { \Db(e \Lambda e) } .
\]
\end{enumerate}
\end{proposition}

\begin{proof}
By our assumption on $\Lambda$, the conditions of \hyref{Proposition}{prop:recollement} are met, yielding a recollement
$\recoll{ \Db(\Lambda/e) }{ \Db(\Lambda) }{ \Db(e\Lambda e) }$.
And since $Q(\Lambda/e)$ has no oriented cycles, \hyref{Lemma}{lem:exseq} provides a full exceptional sequence $\Db(\Lambda/e) = \sod{ S(x_1),\ldots,S(x_t) }$.
By abuse of notation, we will denote the simple $\Lambda$-modules $\iota S(x_i)$ again by $S(x_i)$. 

By \hyref{Theorem}{thm:projection_functor}, the recollement for $\Db(\Lambda)$ restricts to
\begin{equation} 
\label{prop:eq:recollement-adjunction-duality}
    \wrecoll{ \sod{ S(x_1),\ldots,S(x_t) } \cap \lorth\jincl F }
            { \Db(\Lambda)_{\jincl F} }
            { \Db(e \Lambda e) }
. 
\end{equation}
Now the functor $\pi = \Hom_\Lambda(\Lambda e, \blank)$ is right adjoint to $\jincl = \Lambda e \otimes_{e \Lambda e} \blank$, so
\[  \Hom^\bullet_\Lambda(S(x_i),\jincl F) = \Hom^\bullet_\Lambda(\jincl F,\Naka S(x_i))^* = \Hom^\bullet_{e\Lambda e}(F,\pi \Naka S(x_i))^* .  \]
Hence the final hypothesis translates to
 $\Hom^\bullet_\Lambda(S(x_i),\jincl F) = 0$ for $i<t$.

If also $\Hom^\bullet_\Lambda(S(x_t),\jincl F)$ vanishes, 
then the weak recollement of $\Db(\Lambda)_{\jincl F}$ in \eqref{prop:eq:recollement-adjunction-duality} is just the restriction of the recollement of $\Db(\Lambda)$. 
In particular, $\Db(\Lambda)_{\jincl F} = \Db(\Lambda)$, so $\jincl F$ is $d$-spherical in $\Db(\Lambda)$.

Assume now that $\Hom^\bullet_\Lambda(S(x_t),\jincl F) \neq 0$.
We will show that
\[
\sod{S(x_1),\ldots,S(x_{t-1})} \subseteq \sod{ S(x_1),\ldots,S(x_t)} \cap \lorth\jincl F
\]
is an equality which implies, in particular, that $\jincl F$ is properly $d$-spherelike.
For the reverse inclusion, consider $M \in \sod{S(x_1),\ldots,S(x_t)} \cap \lorth\jincl F$.
Its left mutation at $S(x_t)$ is $M'\in \sod{S(x_1),\ldots,S(x_{t-1})}$ defined by the triangle $M' \to \Hom^\bullet(S(x_t),M)\otimes S(x_t) \to M$. Applying $\Hom^\bullet(\blank,\jincl F)$ to this triangle yields
$0 = \Hom^\bullet(S(x_t),M) \otimes \Hom^\bullet(S(x_t),\jincl F),$
because of $M\in \lorth\jincl F$ and $M'\in \sod{S(x_1),\ldots,S(x_{t-1})} \subseteq \lorth\jincl F$.
The assumption $\Hom^\bullet(S(x_t),\jincl F) \neq 0$ finally forces
$\Hom^\bullet(S(x_t),M) = 0$, so that $M\in\sod{S(x_1),\ldots,S(x_{t-1})}$.

Plugging the equality
  $\sod{S(x_1),\ldots,S(x_{t-1})} = \sod{ S(x_1),\ldots,S(x_t)} \cap \lorth\jincl F$
into the recollement \eqref{prop:eq:recollement-adjunction-duality} proves the second part of the proposition.
\end{proof}

Many of our examples use this duality-adjunction argument, and therefore deal with computing $\pi\Naka$ on the simples killed by $e$.

\addtocontents{toc}{\protect\setcounter{tocdepth}{2}}  
\subsection{$\A{n}$-insertion}

Let $\Lambda=\kk Q/I$ be a finite-dimensional algebra, given by a quiver $Q$ bound by an admissible ideal $I$. We fix a vertex $x\in Q_0$ and a number $n\in\IN$. The \emph{$\A{n}$-insertion} in $\Lambda=\kk Q/I$ at the vertex $x$ is the path algebra $\lineinsertion{\Lambda}{n}{x} \coloneqq \kk Q(nx)/I(nx)$ where
\begin{itemize}
\item $Q(nx)_0 \coloneqq \bigl(Q_0 \setminus \{x\}\bigr) \cup \{x_0,\ldots,x_n\}$ on vertex sets;
\item the vertex $x$ is replaced by the quiver $x_0 \to \cdots \to x_n$ of type $\A{n+1}$;
\item arrows going into $x$ become arrows going into $x_0$, and arrows going out of $x$ become arrows going out of $x_n$. 
\end{itemize}
The relations $\rho(nx) \in I(nx)$ are obtained from the relations $\rho \in I$ by the following modification procedure
\begin{itemize}
\item if a relation is not incident with the vertex $x$, it remains unchanged;
\item if a relation passes $x$, extend it by $\xi \coloneqq x_0 \to \cdots \to x_n$;
\item if a relation starts in $x$, let it start in $x_n$;
\item if a relation ends in $x$, let it end in $x_0$.
\end{itemize}
More precisely, an element $\rho \in I$ can be written as $\rho= \sum_i \lambda_i p_{1,i}\cdots p_{m_{i},i}$ where the $p_{j,i}$ are \emph{non-trivial} paths in $Q$ which do not contain subpaths of the form $ \to x \to$ and all of which but $p_{1,i}$ (the last one of each summand) end in $x$. Then
\[ \rho(nx) \coloneqq \sum_i \lambda_i p_{1,i} \xi p_{2 i} \xi \cdots \xi p_{m_{i}-1,i}  \xi p_{m_{i},i}  . \]
It suffices to apply this procedure to a set of generators of $I$.

\begin{remark}
This construction only depends on the $\kk$-algebra $\Lambda$, i.e.\ 
an isomorphism of bound path algebras $\varphi\colon\Lambda = \kk Q/I \isom \kk Q'/I' = \Lambda'$ gives rise to an isomorphism $\tilde\varphi\colon \kk[Q(nx)]/I(nx) \isom \kk[Q'(nx')]/I'(nx')$, where $x' \coloneqq \varphi(x)$.

To see this, by \cite[\S II.3]{ASS} the quivers $Q$ and $Q'$ can be identified and $\varphi(I)=I'$. So we can define $\tilde\varphi$ by extending $\varphi$, sending $x_i \to x_{i+1}$ to $x'_i \to x'_{i+1}$. 
As a zero relation $\tilde\rho = \rho(nx) \in I(nx)$ is then sent to $\tilde\varphi(\tilde\rho) = \phi(\rho)(nx') \in I'(nx')$, the morphism $\tilde\varphi$ is well-defined.
Moreover, a $\tilde\rho' \in I'(nx')$ is of the form $\rho'(nx')$ with $\rho' = \varphi(\rho') \in I'$, hence $\varphi(\rho(nx))=\tilde\rho'$. So $\tilde\varphi$ is an algebra homomorphism, which is an injection of finite dimensional $\kk$-vector spaces of the same dimension, hence an isomorphism of algebras.
\end{remark}
 
This remark justifies the notation $\Lambda(nx)$.
Note that $\modules{\Lambda(nx)}$ has $n$ additional isomorphism classes of simple modules.

\begin{example}
We consider the Auslander algebra $\Lambda$ of $\kk[x]/x^2$ given by

 $\xymatrix{0 \ar@<0.6ex>[r]|{\,a\,} & 1 \ar@<0.6ex>[l]|{\,b\,} }$
with the relation $ba=0$. Inserting $\A{2}$ at 0 yields the quiver shown left, whereas inserting $\A{2}$ at 1 yields the quiver on the right:
\[ \begin{array}{c}
\xymatrix@R=2ex{
 \bullet \ar[dr]^a \\
 \bullet \ar[u]  & 1 \ar[ld]^b  \\ 
 \bullet \ar[u]
} \\
\text{with }  ba=0
\end{array} 
\qquad \qquad
\begin{array}{c}
 \xymatrix@R=2ex{
           & \bullet \ar[d]^{\alpha}   \\
 0 \ar[ru]^a & \bullet \ar[d]^{\beta} \\ 
           & \bullet \ar[ul]^b
}\\
\text{with } b\beta\alpha a =0
\end{array} 
\]
\end{example}

\begin{example}
The derived-discrete algebra $\Lambda(r,n,m)$ of \hyref{Section}{sec:DDA} is an $\A{n-r-1}$-insertation of $\Lambda(r,r+1,m)$ at the successor of the trivalent vertex.
\end{example}

We choose the idempotent $e \coloneqq 1 - e_{x_0} - e_{x_1} -\ldots - e_{x_{n-1}} \in \lineinsertion{\Lambda}{n}{x}$,
and we will consider the induced functor
\[  \jincl \coloneqq \lineinsertion{\Lambda}{n}{x} e\otimes_\Lambda\blank \colon 
                     \Db(\Lambda) \to \Db(\lineinsertion{\Lambda}{n}{x}) .  \]
This choice of idempotent is convenient in our computations below. However, the theory also works for any other choice of idempotent
$f_i \coloneqq 1 - e_{x_0} - e_{x_1} -\ldots - e_{x_{i-1}} - e_{x_{i+1}} - \ldots - e_{x_n}$ with $i=0, \ldots, n-1$, which yields a different inclusion $\lineinsertion{\Lambda}{n}{x} f_i\otimes_\Lambda\blank \colon \Db(\Lambda) \to \Db(\lineinsertion{\Lambda}{n}{x})$.

\begin{remark}
\label{rem:ChenKrause}
Our construction is related to the \emph{abelian expansion} of Chen \& Krause \cite{ChenKrause}.
On the level of module categories we have an (underived)
$\jincl_i = \lineinsertion{\Lambda}{n}{x} f_i \otimes_\Lambda\blank \colon \modules{\Lambda} \to \modules{\lineinsertion{\Lambda}{n}{x}}$. For $n=1$, one can show that $\jincl_0 \colon \modules{\Lambda} \to \modules{\lineinsertion{\Lambda}{1}{x}}$ is an abelian expansion in  the sense of Chen \& Krause; see \cite[Ex.~3.6.2]{ChenKrause}.


We note that the convention in our article is to work with the idempotent $e=f_n$. In general, the inclusion $\jincl_1 \colon \modules{\Lambda} \to \modules{\lineinsertion{\Lambda}{1}{x}}$ defined by this idempotent $e$ is not an expansion of abelian categories; see \cite[Prop.~3.2.2(1)]{ChenKrause}. Note that the only difference between $\jincl_0$ and $\jincl_1$ is the way how $\modules{\Lambda}$ is embedded into $\modules{\lineinsertion{\Lambda}{1}{x}}$.

%
\end{remark}

\begin{lemma} \label{lem:insertion-basics}
Let $\lineinsertion{\Lambda}{n}{x}$ be the $\A{n}$-insertion in $\Lambda$ at $x\in Q_0(\Lambda)$.
Then
\begin{enumerate}
\item $\Lambda \cong e\lineinsertion{\Lambda}{n}{x}e$ and $\lineinsertion{\Lambda}{n}{x}/e \cong \kk\A{n}$;
\item if $\projdim (\Lambda(nx)e)_{\Lambda} < \infty$, then $e$ is a recollant idempotent and so there is a recollement
      \[\recoll{\Db(\Lambda(nx)/e)}{\Db(\Lambda(nx))}{\Db(\Lambda)};\]
\item if $\Lambda$ has finite global dimension, then so does $\lineinsertion{\Lambda}{n}{x}$;
\item the functor
      $\iincl = \Hom_{\lineinsertion{\Lambda}{n}{x}/e}(\lineinsertion{\Lambda}{n}{x}/e,\blank) \colon
                \Db(\Lambda(nx)/e) \into \Db(\Lambda(nx))$
      in (2) is fully faithful with image $\sod{ S(x_0),\ldots,S(x_{n-1})}$.
      Each $S(x_i)$ is exceptional as an object of $\Db(\lineinsertion{\Lambda}{n}{x})$.
\end{enumerate}
\end{lemma}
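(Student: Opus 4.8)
The plan is to establish the four parts in order, using part (1) to identify the outer terms of the desired recollement and then invoking Miyachi's theorem (\hyref{Proposition}{prop:recollement}) and Wiedemann's lemma (\hyref{Proposition}{prop:recoll-finite-gldim}) for parts (2) and (3). For part (1), I would first write down an explicit path bijection realising $\Lambda\cong e\lineinsertion{\Lambda}{n}{x}e$: send each vertex $v\neq x$ to itself and $x$ to $x_n$, leave every arrow not incident to $x$ and every arrow out of $x$ unchanged (the latter already emanate from $x_n$), and precompose each arrow into $x$ with the inserted path $\xi=x_0\to\cdots\to x_n$. A passage through $x$ then corresponds to a full traversal of $\xi$, and the relation-modification procedure is built precisely so that a relation $\rho\in I$ is carried to $\xi\,\rho(nx)$, a relation of the corner; this gives the algebra isomorphism. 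For the second isomorphism, the two-sided ideal generated by $e$ kills every vertex except $x_0,\ldots,x_{n-1}$, the surviving quiver is the linearly oriented $\A{n}$, and no relation survives, because every relation of $I(nx)$ either meets $\supp(e)$ or contains all of $\xi$ (hence reaches $x_n$); thus $\lineinsertion{\Lambda}{n}{x}/e\cong\kk\A{n}$.

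For part (2), I would check the three hypotheses of \hyref{Proposition}{prop:recollement} for $e\in\lineinsertion{\Lambda}{n}{x}$. The third holds because its second alternative $\projdim(\lineinsertion{\Lambda}{n}{x}e)_{e\lineinsertion{\Lambda}{n}{x}e}<\infty$ is exactly the hypothesis $\projdim(\lineinsertion{\Lambda}{n}{x}e)_\Lambda<\infty$ read through the identification $e\lineinsertion{\Lambda}{n}{x}e\cong\Lambda$ from part (1). The first two hypotheses rest on a single syzygy computation, which I expect to be the main obstacle: the projective cover of the left module $\lineinsertion{\Lambda}{n}{x}/e$ is $\bigoplus_{i=0}^{n-1}P(x_i)$, and its first syzygy is isomorphic to a direct sum of copies of the projective $P(x_n)$. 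Granting this, $\projdim{}_{\lineinsertion{\Lambda}{n}{x}}(\lineinsertion{\Lambda}{n}{x}/e)\le 1$, and since the top $S(x_n)$ of $P(x_n)$ is not a composition factor of $\lineinsertion{\Lambda}{n}{x}/e$ we get $\Hom(P(x_n),\lineinsertion{\Lambda}{n}{x}/e)=0$ and hence $\Ext^{>0}(\lineinsertion{\Lambda}{n}{x}/e,\lineinsertion{\Lambda}{n}{x}/e)=0$. The delicate step inside the syzygy computation is the injectivity of right multiplication by $\xi$ on the relevant projectives; this should follow from the observation that no relation of $I(nx)$ has its source at an interior chain vertex, since every relation originates in $\supp(e)$ and traverses the whole of $\xi$.

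Part (3) is then immediate: if $\Lambda$ has finite global dimension, the finitely generated right $\Lambda$-module $(\lineinsertion{\Lambda}{n}{x}e)_\Lambda$ has finite projective dimension, so the hypothesis of (2) is met and the recollement exists; \hyref{Proposition}{prop:recoll-finite-gldim} then reduces finiteness of $\gldim\lineinsertion{\Lambda}{n}{x}$ to that of $e\lineinsertion{\Lambda}{n}{x}e\cong\Lambda$ and $\lineinsertion{\Lambda}{n}{x}/e\cong\kk\A{n}$, both of which hold. Finally, for part (4), the functor $\iincl=i_*$ of a recollement is automatically fully faithful, and, being restriction of scalars along $\lineinsertion{\Lambda}{n}{x}\onto\lineinsertion{\Lambda}{n}{x}/e\cong\kk\A{n}$, it carries the simple $\kk\A{n}$-modules to the simples $S(x_0),\ldots,S(x_{n-1})$ of $\lineinsertion{\Lambda}{n}{x}$. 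Since $\Db(\kk\A{n})=\sod{S(x_0),\ldots,S(x_{n-1})}$ by \hyref{Lemma}{lem:exseq}, the image is $\sod{S(x_0),\ldots,S(x_{n-1})}$, and full faithfulness yields $\Hom^\bullet_{\lineinsertion{\Lambda}{n}{x}}(S(x_i),S(x_i))\cong\Hom^\bullet_{\kk\A{n}}(S(x_i),S(x_i))=\kk$, so each $S(x_i)$ is exceptional in $\Db(\lineinsertion{\Lambda}{n}{x})$.
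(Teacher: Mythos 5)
Your proposal is correct and follows the same overall route as the paper: the explicit path bijection $x\mapsto x_n$ with $\xi$-precomposition for part (1), verification of the hypotheses of \hyref{Proposition}{prop:recollement} via the syzygy $0\to P(x_n)\to P(x_j)$ for part (2), and \hyref{Proposition}{prop:recoll-finite-gldim} and \hyref{Lemma}{lem:exseq} for parts (3) and (4). The one place where you genuinely diverge is the $\Ext$-vanishing in (2): the paper exhibits an explicit null-homotopy, using that the map $P(x_n)\to P(x_{j'})$ is unique up to scalar, whereas you observe that the syzygy is a sum of copies of $P(x_n)$ while $\lineinsertion{\Lambda}{n}{x}/e$ is supported off $x_n$, so $\Hom(P(x_n),\lineinsertion{\Lambda}{n}{x}/e)=0$ kills $\Ext^{>0}$ outright. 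Your version is shorter and less error-prone, and it correctly isolates the real content, namely the injectivity of right multiplication by the chain segments, which holds because no relation of $\lineinsertion{I}{n}{x}$ has its source at an interior vertex of the inserted chain. One small imprecision in (1): only relations \emph{ending} at $x$ get carried to $\xi\,\lineinsertion{\rho}{n}{x}$ inside $e\lineinsertion{I}{n}{x}e$; relations not ending at $x$ contribute $e\lineinsertion{\rho}{n}{x}e$ directly, and the paper is careful to list both types of generators when checking $\varphi(e\lineinsertion{I}{n}{x}e)=I$. This does not affect the correctness of your argument.
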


\begin{proof}
(1) 
The isomorphism $\lineinsertion{\Lambda}{n}{x}/e \cong \kk\A{n}$ is clear.
To show $\Lambda \cong e\lineinsertion{\Lambda}{n}{x}e$, we note that 
\[
e(\kk \lineinsertion{Q}{n}{x}/\lineinsertion{I}{n}{x})e \cong (e\kk \lineinsertion{Q}{n}{x}e)/(e\lineinsertion{I}{n}{x}e)
\]
By construction, we get a bijection between the quiver of $e\kk \lineinsertion{Q}{n}{x}e$ and the original quiver $Q$ by sending $x_n$ to $x$, which extends to an isomorphism $\varphi\colon e\kk \lineinsertion{Q}{n}{x}e \cong \kk Q$ of algebras.

To establish the claim, we check that $e\lineinsertion{I}{n}{x}e$ becomes $I$ under this isomorphism.
Let $I = \genby{\rho_1,\ldots,\rho_m}$ where we may assume that each relation $\rho_i$ consists of linear combinations of paths starting in the same vertex and ending in a single other one. Then by construction
 $\lineinsertion{I}{n}{x} = \genby{\lineinsertion{\rho_1}{n}{x},\ldots, \lineinsertion{\rho_m}{n}{x}}$.
We assume that these relations are ordered in such a way that $\lineinsertion{\rho_{l+1}}{n}{x}$, \ldots, $\lineinsertion{\rho_m}{n}{x}$ end in $x_0$ and all other relations $\lineinsertion{\rho_i}{n}{x}$ end in vertices different from $x_0$.
One can check that
\begin{align*}
    e\lineinsertion{I}{n}{x}e 
 &= e\genby{\lineinsertion{\rho_1}{n}{x}, \ldots, \lineinsertion{\rho_m}{n}{x}, \xi\lineinsertion{\rho_{l+1}}{n}{x},\ldots, \xi \lineinsertion{\rho_m}{n}{x} }e \\
 &= \genby{e \lineinsertion{\rho_1}{n}{x} e, \cdots, e \lineinsertion{\rho_l}{n}{x} e, e \xi \lineinsertion{\rho_{l+1}}{n}{x} e, \cdots, e \xi \lineinsertion{\rho_m}{n}{x} e}.
\end{align*}
To see the first equality, note that any path in $\lineinsertion{I}{n}{x}$ ending in $x_0$ will become zero in $e\lineinsertion{I}{n}{x}e$, unless it is extended (at least) by $\xi = x_0 \to \cdots \to x_n$.

Applying the isomorphism $\varphi$ from above gives $\varphi(e\lineinsertion{\rho_i}{n}{x}e)=\rho_i$ and $\varphi(e\xi \lineinsertion{\rho_j}{n}{x}e)=\rho_j$, where $1 \leq i \leq l$ and $l+1 \leq j \leq n$. So $\varphi(e\lineinsertion{I}{n}{x}e)=I$, completing the proof.

(2) 
We will apply \hyref{Proposition}{prop:recollement} here, so we first check the condition
 $\Ext^k_{\lineinsertion{\Lambda}{n}{x}}(\lineinsertion{\Lambda}{n}{x}/e, \lineinsertion{\Lambda}{n}{x}/e) = 0$ for $k\neq 0$.
We decompose $\lineinsertion{\Lambda}{n}{x}/e$ into a direct sum of indecomposable $\lineinsertion{\Lambda}{n}{x}/e$-modules. As left $\lineinsertion{\Lambda}{n}{x}$-modules these summands have projective resolutions $0 \to P(x_n) \to P(x_j)$, where $x_j \in Q_0(\lineinsertion{\Lambda}{n}{x}/e)$. In particular, the left $\lineinsertion{\Lambda}{n}{x}$-module $\lineinsertion{\Lambda}{n}{x}/e$ has projective dimension one.
So there can be at most a non-trivial $\Ext^1$, so we check for $x_j$ and another vertex $x_{j'}$:
\[ \xymatrix@R=3ex@C=3ex{
& 0 \ar[r] & P(x_n) \ar[r] \ar[d] \ar@{-->}[ld]_{\lambda \id} & P(x_j) \ar[r] \ar@{-->}[ld]^{0} & 0\\
0 \ar[r] & P(x_n) \ar[r] & P(x_{j'}) \ar[r] & 0
} \]
There is only one map $P(x_n) \to P(x_{j'})$ up to scalars; it is given by the path from $x_{j'}$ to $x_n$.
Therefore, there is a $\lambda \in \kk$ such that the triangle on the left commutes.
This extends to a null-homotopy showing that $\Ext^1$ indeed vanishes. Since $\projdim (\lineinsertion{\Lambda}{n}{x}e)_{\Lambda} < \infty$ by assumption, we can apply \hyref{Proposition}{prop:recollement} to complete the proof.

(3) and (4) follow from \hyref{Proposition}{prop:recoll-finite-gldim} and \hyref{Lemma}{lem:exseq}, respectively, where for (3), we use that $\lineinsertion{\Lambda}{n}{x}/e \cong \kk\A{n}$ has global dimension 1.
\end{proof}

\begin{theorem} \label{thm:insertion-sphericals}
Let $\Lambda$ be an algebra of finite global dimension and $\lineinsertion{\Lambda}{n}{x}$ be the $\A{n}$-insertion in $\Lambda$ at $x\in Q_0(\Lambda)$.
Let $F\in\Db(\Lambda)$ be a $d$-spherical object.

Then $\jincl F\in\Db(\lineinsertion{\Lambda}{n}{x})$ is a properly $d$-spherelike object if and only if $\Hom_{\Lambda}^\bullet(S(x),F)\neq0$, and in this case the spherical subcategory of $\jincl F$ is
\[  
\Db(\lineinsertion{\Lambda}{n}{x})_{\jincl F} \cong \Db(\Lambda) \oplus \Db(\kk\A{n-1}) .  
\] 
\end{theorem}

\begin{proof}
By \hyref{Lemma}{lem:insertion-basics} there is the recollement
\[
\recoll{\sod{S(x_0),\ldots,S(x_{n-1})}}
       {\Db(\lineinsertion{\Lambda}{n}{x})}
       {\Db(\Lambda)} .
\]
Note that $\Hom^\bullet_\Lambda(F,\pi\Naka S(x_i)) = \Hom^\bullet_\Lambda(F,\pi S(x_{i+1})[1])$ for $i=0,\ldots,n-1$,
since $\Naka S(x_i) = S(x_{i+1})[1]$ where $\Naka$ is the Nakayama functor for $\lineinsertion{\Lambda}{n}{x}$.
For $i \leq n-2$, we get $\pi S(x_{i+1}) =0$ by support reasons, which in turn implies $\Hom^\bullet_\Lambda(F,\pi\Naka S(x_i)) = 0$.
Hence, we can apply \hyref{Proposition}{prop:idempotent-sod} and get that $\jincl F$ is properly $d$-spherelike if and only if 
\[
\begin{split}
      \Hom^\bullet_{\lineinsertion{\Lambda}{n}{x}}(S(x_{n-1}), \jincl F) 
 &  = \Hom^\bullet_\Lambda(F, \pi S(x_n)[1])^*
    = \Hom^\bullet_\Lambda(F,S(x)[1])^* \\
 &  = \Hom^\bullet_\Lambda(\Naka^{-1}S(x),F[-1]) = \Hom^\bullet_\Lambda(S(x),F[d-1])
\end{split}
\]
is non-zero, as claimed.
Here we used $\pi S(x_n) = S(x)$ and, in the last step, the Nakayama functor for $\Lambda$ and $d$-sphericality of $F$.

Moreover, we deduce that there is a weak recollement
\[
\wrecoll{\sod{ S(x_0),\ldots,S(x_{n-2})}}{\Db(\lineinsertion{\Lambda}{n}{x})_{\jincl F}}{\Db(\Lambda)} .
\]
Using Serre duality as above, we have
\[ \Hom^\bullet_{\lineinsertion{\Lambda}{n}{x}}(S(x_i),\jincl M) =
   \Hom^\bullet_\Lambda(M,\pi S(x_{i+1})[1])^* , \]
and these spaces vanish for any $M \in \Db(\Lambda)$ and $0 \leq i \leq n-2$.
Therefore, the outer parts of the recollement are fully orthogonal, i.e.\
 $\Db(\lineinsertion{\Lambda}{n}{x})_{\jincl F} = \sod{S(x_0),\ldots,S(x_{n-2})} \oplus \jincl(\Db(\Lambda))$.

It remains to show that 
 $\mathcal{S} \coloneqq  \sod{S(x_0),\ldots,S(x_{n-2})} \cong \Db(\kk\A{n-1})$.
Let $M_j \coloneqq  M(j,n-2)$ be the unique indecomposable $\lineinsertion{\Lambda}{n}{x}$-module with top $S(x_j)$ and socle $S(x_{n-2})$.
Then $T=\bigoplus_{j=0}^{n-2} M_j$ is a tilting object in $\mathcal{S}$.
To see this, first note that any such $M_j$ has a projective resolution $P(x_{r_n})\to P(x_j)$.
Therefore, the projective dimension of $T$ is at most one. One can check that all $\Ext^1(M_k,M_l)=0$, so all $\Ext$-groups of $T$ vanish.
Since $0 \to M_{j+1} \to M_j \to S(x_j) \to 0$ is exact, $T$ is a tilting object.
One can calculate that the endomorphism ring of $T$ is indeed $\kk\A{n-1}$.
This completes the proof.
\end{proof}

\begin{remark} \label{rem:insertion-sphericals-sod}
\SOD\ holds in the setup of \hyref{Theorem}{thm:insertion-sphericals}, which can be seen in the following way.
The proof shows that for properly spherelike $\jincl F$,
\begin{equation} \label{eq:recollment-insertion}
  \recoll{ \sod{S(x_0),\ldots,S(x_{n-2})} }
         { \Db(\lineinsertion{\Lambda}{n}{x})_{\jincl F} }
         { \Db(\Lambda) }  ,
\end{equation}
is actually a recollement.
Now by \hyref{Lemma}{lem:insertion-basics}(4), the derived category $\Db(\lineinsertion{\Lambda}{n}{x}/e)$ has a full exceptional sequence
  $\sod{S(x_0),\ldots,S(x_{n-2}),S(x_{n-1})}$.
After $n-1$ successive left mutations to $S(x_{n-1})$, we arrive at another exceptional sequence
  $\sod{ \mathsf{L}^{n-1}(S(x_{n-1})), S(x_0),\ldots,S(x_{n-2}) }$
for $\Db(\lineinsertion{\Lambda}{n}{x}/e)$.
Combined with \hyref{Lemma}{lem:insertion-basics}(2), we obtain the recollement for $\Db(\lineinsertion{\Lambda}{n}{x})$:
\[ \recoll{ \sod{ \mathsf{L}^{n-1}(S(x_{n-1})), S(x_0),\ldots,S(x_{n-2}) } }
          { \Db(\lineinsertion{\Lambda}{n}{x}) }
          { \Db(\Lambda) }. \]
Comparing this recollement with the one from \eqref{eq:recollment-insertion}, we find
\[ \recoll{ \sod{\mathsf{L}^{n-1}(S(x_{n-1}))} }
          { \Db(\lineinsertion{\Lambda}{n}{x}) }
          { \Db(\lineinsertion{\Lambda}{n}{x})_{\jincl F} }   \]
so condition \SOD\ is met.
See \cite[\S 1]{Rudakov} or \cite[\S 2.3]{Bridgeland-Stern} for mutations.
Actually, we can compute $\mathsf{L}^{n-1}(S(x_{n-1}))$ explicitly. We have
  $\sod{S(x_0),\ldots,S(x_{n-1})} = \Db(\kk\A{n})$
by \hyref{Lemma}{lem:exseq}, and helix theory (see e.g.\ \cite[Cor.~2.10]{Bridgeland-Stern}) yields
\[ \mathsf{L}_{S(x_{n-2}),\ldots,S(x_0)} S(x_{n-1}) = \Naka_{\kk\A{n}} S(x_{n-1}) = I(x_{n-1}) . \]
Here we use that the simple on the sink $x_{n-1}$ of $\kk\A{n}$ is also the projective module of that vertex; so its Serre dual is given by the injective of the sink.
\end{remark}

\begin{remark} \label{rem:insertion_simultan}
\hyref{Theorem}{thm:insertion-sphericals} can be extended to iterated insertions at a subset $\Gamma\subset Q_0(\Lambda)$ with $\Hom^\bullet_\Lambda(S(x),F)\neq0$ for all $x\in\Gamma$.
The spherical subcategory behaves as expected: it is of the form $\Db(\Lambda) \oplus \bigoplus_{x \in \Gamma} \Db(\kk A_{n_x-1})$.

We treat $\Gamma=\{x,y\}$ and leave the general case to the reader. We denote $\Lambda' = \lineinsertion{\Lambda}{n}{x}$ and $\Lambda'' = \lineinsertion{\Lambda'}{m}{y}$ with $\jincl' \colon \Db(\Lambda') \into \Db(\Lambda'')$.
By the considerations of the previous remark and \hyref{Lemma}{lem:insertion-basics}, we obtain the recollement
\[   \recoll{\sod{S(y_0),\ldots,S(y_{m-1}),\jincl' I(x_{n-1})}}
            {\Db(\Lambda'')}
            {\Db(\Lambda')_{\jincl F}}   \]
As in the proof of \hyref{Theorem}{thm:insertion-sphericals}, we use the duality-adjunction argument from \hyref{Proposition}{prop:idempotent-sod} to deduce $\Hom^\bullet(S(y_j),\jincl'\jincl F) \iff j<m-1$. Moreover,
 $\Hom^\bullet(\jincl'I(x_{n-1}),\jincl'\jincl F) = \Hom^\bullet(I(x_{n-1}),\jincl F) \neq 0$,
as follows from the short exact sequence
 $0 \to S(x_{n-1}) \to I(x_{n-1}) \to C \to 0$ 
with $C \in \sod{S(x_0),\ldots,S(x_{n-2})}$ and therefore $C\in {}\orth\jincl F$.

Therefore we arrive at the recollement
\[
\recoll{\sod{S(y_0),\ldots,S(y_{m-2})}}
       {\Db(\Lambda'')_{\jincl'\jincl F}}
       {\Db(\Lambda')_{\jincl F}}
\]
which turns out to be
$\Db(\Lambda'')_{\jincl'\jincl F} \cong \Db(\Lambda) \oplus \Db(\kk A_{n-1}) \oplus \Db(\kk A_{m-1})$.
\end{remark}

\subsection{Tacking on quivers}

We start with the following data:
\begin{itemize}
\item $\Lambda=\kk Q/I$, a finite-dimensional algebra, given as a quiver $Q$ bound by an ideal $I$;
\item $(T,t)$, a finite quiver $T$ without oriented cycles and $t$ a sink in $T$;
\item $n\colon Q_0\to\IN$, assigning a multiplicity to every vertex of $Q$.
\end{itemize}
From this, we construct the following upper triangular matrix algebra 
\begin{align*}
(T,t)\tack_n\Lambda \coloneqq \begin{pmatrix} \Lambda & M \\ 0 & \kk T \end{pmatrix}
\quad \text{with} \quad
 M \coloneqq \bigoplus_{x\in Q_0(\Lambda)} (\Lambda e_x)^{\oplus n(x)} \otimes_\kk e_t \kk T
. \end{align*} 

In particular, this construction only depends on the algebra $\Lambda$, not on the bound quiver presentation $\kk Q/I$.
As quiver with relations $(T,t)\tack_n\Lambda$ is given by
\begin{itemize}
\item the vertex set $Q_0 \cup T_0$ (disjoint union);
\item the arrow set $Q_1 \cup T_1$ together with $n(x)$ arrows $t\to x$ for all $x\in Q_0$;
\item the relations $I$.
\end{itemize}
When $t$ and $n$ are understood, we simply write $T\tack\Lambda$ instead of $(T,t)\tack_n\Lambda$.

\begin{example}
For $T=A_1$ and any multiplicity function $n\colon Q_0\to\IN$, the construction coincides with the one-point extension on a projective module. This is given by a matrix algebra
\[ \left( \begin{array}{cc} \Lambda & M \\ 0 & \kk \end{array} \right)
   \qquad\text{with } M = \bigoplus_{x\in Q_0(\Lambda)} (\Lambda e_x)^{\oplus n(x)} . \]
\end{example}

\begin{example}
The derived-discrete algebra $\Lambda(r,n,m)$ of \hyref{Section}{sec:DDA} is obtained from tacking $\A{m}$ to the quiver of $\Lambda(r,n,0)$. Here the multiplicity function is 1 for the vertex of $\Lambda(r,n,0)$ which becomes the trivalent vertex of $\Lambda(r,n,m)$, and 0 else.
\end{example}

In the following lemma, we consider a particular idempotent
 $e \in T\tack\Lambda$, and the fully faithful functor it induces:
\[ e \coloneqq e_{Q_0} = \sum_{x \in Q_0} e_x, \qquad
   \jincl \coloneqq (T\tack\Lambda)e\otimes_\Lambda\blank \colon \Db(\Lambda) \to \Db(T\tack\Lambda)
. \]

It is well-known that $(1)-(3)$ of the lemma below actually hold for upper triangular matrix algebras
\[ \begin{pmatrix} \Lambda & M \\ 0 & B \end{pmatrix} \]
where $B$ has finite global dimension; see for example \cite[Prop.~3.3]{Li}.

\begin{lemma} \label{lem:tacking-basics}
Let $\Lambda$ be as above, $(T,t)$ and $n\colon Q_0(\Lambda)\to\IN$ as above.
Then
\begin{enumerate}
\item $\Lambda \cong e(T\tack\Lambda)e$ and $(T\tack\Lambda)/e \cong \kk T$;
\item $e$ is a recollant idempotent and so there is a recollement \[\recoll{\Db((T\tack\Lambda)/e)}{\Db(T\tack\Lambda)}{\Db(\Lambda)};\]
\item if $\Lambda$ has finite global dimension, then so does $T\tack\Lambda$;
\item the functor $\iincl = \Hom_{T\tack\Lambda}((T\tack\Lambda)/e,\blank) \colon \Db((T\tack\Lambda)/e) \into \Db(T\tack\Lambda)$ is fully faithful with image $\sod{S(y) \mid y\in T_0}$. Here these $S(y)$ form an exceptional sequence of $\Db(\kk T)$, and we consider them as objects of
 $\Db(T\tack\Lambda)$ via $T\tack\Lambda \to (T\tack\Lambda)/e = \kk T$.
Each $S(y)$ is an exceptional object of $\Db(T\tack\Lambda)$.
\end{enumerate}
\end{lemma}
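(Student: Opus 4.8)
The plan is to mirror the structure of \hyref{Lemma}{lem:insertion-basics}, using that $T\tack\Lambda$ is an upper triangular matrix algebra whose corner $(T\tack\Lambda)/e\cong\kk T$ is hereditary, since $T$ has no oriented cycles. Once the idempotent identifications in (1) are established, items (1)--(3) become a special case of the general statement for such matrix algebras, so one may either quote \cite[Prop.~3.6]{Li} or give the short arguments I sketch below; part (4) is then extracted from the recollement together with \hyref{Lemma}{lem:exseq}.

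For (1), I would reason on quivers. The only arrows incident to the $Q_0$-vertices are those of $Q_1$, which stay inside $Q_0$, and the connecting arrows $t\to x$, which enter $Q_0$ but never leave it; hence every path starting at a $Q_0$-vertex remains within $Q$, and $e(T\tack\Lambda)e$ is spanned by the paths of $Q$ modulo $I$, giving $e(T\tack\Lambda)e\cong\Lambda$. Dually, the two-sided ideal generated by $e=e_{Q_0}$ absorbs all of $Q_1$ and all connecting arrows, leaving only the arrows $T_1$ and no relations, so $(T\tack\Lambda)/e\cong\kk T$.

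For (2), I would check the three hypotheses of \hyref{Proposition}{prop:recollement}. Decomposing the left module $(T\tack\Lambda)/e\cong\bigoplus_{y\in T_0}N_y$ into the inflations $N_y$ of the indecomposable projective $\kk T$-modules, each $N_y$ admits the two-term projective resolution
\[ 0 \to Me_y \to P(y) \to N_y \to 0, \]
where $P(y)$ is the indecomposable projective at $y$ and $Me_y=\bigoplus_{x\in Q_0}(\Lambda e_x)^{n(x)\cdot\dim_\kk(e_t\kk Te_y)}$ sits in the $\Lambda$-slot. As $Me_y$ is a projective $\Lambda$-module, it is projective over $T\tack\Lambda$, whence $\projdim{}_{T\tack\Lambda}((T\tack\Lambda)/e)\leq1$. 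The remaining hypothesis on a one-sided projective dimension holds because $(T\tack\Lambda)e\cong\Lambda$ is free of rank one as a right $e(T\tack\Lambda)e\cong\Lambda$-module. Finally, applying $\Hom_{T\tack\Lambda}(\blank,(T\tack\Lambda)/e)$ to the resolution above kills $\Ext^1$: the summands $P(x)$ of $Me_y$ have top supported on $Q_0$, so they admit no maps to a module supported on $T_0$. Thus $\Ext^k_{T\tack\Lambda}((T\tack\Lambda)/e,(T\tack\Lambda)/e)=0$ for $k>0$, \hyref{Proposition}{prop:recollement} yields the recollement, and (3) follows from \hyref{Proposition}{prop:recoll-finite-gldim} with $\gldim(\kk T)\leq1$.

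For (4), the functor $\iincl=i_*$ is fully faithful as the left inclusion of the recollement, and it is the restriction of scalars along the quotient $T\tack\Lambda\onto(T\tack\Lambda)/e=\kk T$, hence sends the simple $\kk T$-module $S(y)$ to the simple $T\tack\Lambda$-module $S(y)$. Since $T$ is acyclic, \hyref{Lemma}{lem:exseq} orders $T_0$ so that $\Db(\kk T)=\sod{S(y)\mid y\in T_0}$ is a full exceptional collection; applying the fully faithful $\iincl$ identifies its image with $\sod{S(y)\mid y\in T_0}\subseteq\Db(T\tack\Lambda)$ and transports the exceptionality of each $S(y)$. The one genuinely computational point is the resolution in (2), but it is immediate once one notes that $Me_y$ is projective over $\Lambda$ by the very definition of $M$; the disjointness of the supports of $Q_0$ and $T_0$ then makes the $\Ext$-vanishing and the identification in (4) essentially automatic, so I anticipate no serious obstacle beyond careful bookkeeping of left- versus right-module conditions.
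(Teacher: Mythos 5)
Your proposal is correct and follows the paper's overall template: establish (1) on the level of quivers, verify the hypotheses of \hyref{Proposition}{prop:recollement} for $e=e_{Q_0}$, and then deduce (3) and (4) from \hyref{Proposition}{prop:recoll-finite-gldim} and \hyref{Lemma}{lem:exseq}. Two of your verifications, however, take a genuinely different (and in both cases slightly cleaner) route. First, for the $\Ext$-vanishing you apply $\Hom(\blank,N_{y'})$ to the projective resolution $0\to Me_y\to P(y)\to N_y\to 0$ and observe $\Hom(P(x),N_{y'})\cong e_xN_{y'}=0$ for $x\in Q_0$; the paper instead coresolves $(T\tack\Lambda)/e$ by the $\kk T$-injectives $I(j)$ and notes these remain injective over $T\tack\Lambda$. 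Both arguments are valid and of comparable length. Second, for the third Miyachi condition you check $\projdim\bigl((T\tack\Lambda)e\bigr)_{e(T\tack\Lambda)e}=0$, using that $(1-e)(T\tack\Lambda)e=0$ (no paths from $Q$ to $T$) forces $(T\tack\Lambda)e\cong\Lambda$ to be free of rank one over $e(T\tack\Lambda)e\cong\Lambda$; the paper instead verifies the other branch of the disjunction, $\projdim\bigl((T\tack\Lambda)/e\bigr)_{T\tack\Lambda}<\infty$, via the injective dimension of the simples $S(y)$ and duality. Your choice avoids that duality detour entirely and is the more economical one here, at the price of being special to the situation where the corner idempotent sees no incoming paths. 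The remaining steps — the two-term resolution with $Me_y$ projective over $T\tack\Lambda$ because it is a projective $\Lambda$-module inflated along the matrix decomposition, and the identification of the image of $\iincl$ with $\sod{S(y)\mid y\in T_0}$ — coincide with the paper's argument.
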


\begin{proof}
(1) The isomorphisms $(T\tack\Lambda)/e \cong \kk T$ and $\Lambda \cong e(T\tack\Lambda)e$ are obvious from the construction of $T\tack\Lambda$ and the choice of $e=e_{Q_0}$, as there are no paths whatsoever from $Q$ to $T$.

(2) We are going to check the conditions of \hyref{Proposition}{prop:recollement}.
First, we show $\Ext^k_{T\tack\Lambda}((T\tack\Lambda)/e,(T\tack\Lambda)/e)=0$ for $k\neq 0$.
We can resolve $(T\tack\Lambda)/e$ as a $(T\tack\Lambda)/e$-module using direct sums of the indecomposable injective modules $I(j)$ for $j \in T_0$. 
Note that these modules stay injective when considered as $T\tack\Lambda$-modules, so the resolution stays the same.
Hence there is no $\Ext^1$ between those $T\tack\Lambda$-modules, because there is no $\Ext^1$ between the corresponding $(T\tack\Lambda)/e$-modules.

Next, we check that $(T\tack\Lambda)/e$ has finite projective dimension as a left $T\tack\Lambda$-module.
For this, we decompose $(T\tack\Lambda)/e$ into a direct sum of indecomposable projective $(T\tack\Lambda)/e$-modules. A projective resolution for one single summand as a $T\tack\Lambda$-module then looks like
\[ 0 \to \bigoplus_{t\to x} P(x)^{n(x)\cdot m(y)} \to P(y) \] 
where the sum is over the successors of $t$, so there are $n(x)$ arrows $t\to x$, and $m(y)$ is the number of paths from $y \in T_0$ to $t$.
This shows that the projective dimension of $(T\tack\Lambda)/e$ as a left $T\tack\Lambda$-module is at most $1$.

Finally, for any $y\in T = Q_0((T\tack\Lambda)/e)$, the injective resolution of the simple $S(y)$ as a $T\tack\Lambda$-module coincides with its $\kk T$-resolution, hence $\injdim_{T\tack\Lambda}(S(y))\leq1$. Therefore, all $(T\tack\Lambda)/e$-modules have finite injective dimension, and in particular the dual $D((T\tack\Lambda)/e)$ has. It follows that $(T\tack\Lambda)/e$ has finite projective dimension as a right $T\tack\Lambda$-module.

(3) and (4) follow from \hyref{Proposition}{prop:recoll-finite-gldim} and \hyref{Lemma}{lem:exseq}, respectively.
\end{proof}

\begin{theorem} \label{thm:tacking-sphericals}
Let $\Lambda$ be a finite-dimensional algebra of finite global dimension, $(T,t)$ a finite quiver with a sink and $n\colon Q_0(\Lambda)\to\IN$ any function, and let $T\tack\Lambda=(T,t)\tack_n \Lambda$ be the algebra obtained from tacking $T$ onto $\Lambda$ via $n$. Suppose $F\in\Db(\Lambda)$ is a $d$-spherical object.

Then $\jincl F\in\Db(T\tack\Lambda)$ is properly $d$-spherelike if and only if $n(x)>0$ for some $x\in\supp(F)$, and then the spherical subcategory is
\[  \Db(T\tack\Lambda)_{\jincl F} \cong \Db(\Lambda) \oplus \Db(\kk T/e_t)  . \]
\end{theorem}

\begin{proof}
By \hyref{Lemma}{lem:tacking-basics}, there is a recollement
\[
\recoll{\genby{S(y) \mid y \in T_0}}
       {\Db(T\tack\Lambda)}
       {\Db(\Lambda)}
\]
In order to apply \hyref{Proposition}{prop:idempotent-sod}, we look at
\begin{equation}
\label{eq:adj2}
    \Hom^\bullet(S(y),\jincl F)
  = \Hom^\bullet(\jincl F, \Naka S(y))^* 
  = \Hom^\bullet(F, \pi \Naka S(y))^*
\end{equation}
where $\pi = \Hom((T\tack\Lambda)e,\blank)$ and $y \in T_0$.
Fix a vertex $y\in T_0, y \neq t$. Then the simple $S(y)$ has a projective resolution $0\to P \to P(y) \to S(y) \to0$, where $P$ is the direct sum of $P(y')$ for all arrows $y \to y'$ in $T$. Hence $\Naka S(y)$ is isomorphic to $I \to I(y)$.
Kernel and cokernel of this map are supported on $T_0$ which follows from the construction of $T\tack\Lambda$, and as $y \neq t$.
Therefore $\pi \Naka S(y)=0$ and thus $S(y) \in {}\orth\jincl F$ for all $y \in T_0 \setminus \{t\}$ by \eqref{eq:adj2}.

Hence \hyref{Proposition}{prop:idempotent-sod} yields that $\jincl F$ is properly $d$-spherelike if and only if $\Hom^\bullet(S(t),\jincl F) \neq 0$.
Next we want to see that this non-vanishing is implied by $n(x)>0$ for some $x\in\supp(F)$.
The support condition translates to $e_x(H^j(F))\neq0$ for some $j\in\IZ$, hence $e_x(H^j(\jincl F))\neq0$. Let
 $0 \to P'\oplus P(x) \xrightarrow{a} P(t) \to S(t) \to 0$ be a projective resolution of $S(t)$ as an $T\tack\Lambda$-module; here $P(x)$ occurs as a non-trivial summand due to $n(x)>0$. Consider the morphism of complexes
\[ \xymatrix{
              & 0 \ar[d]^0 \ar[r]          & P'\oplus P(x) \ar[r]^a \ar[d]^{(0,\varphi)} & P(t) \ar[r] \ar[d]^0  & 0 \ar[d]^0 \\
\cdots \ar[r] & F^{j-1}     \ar[r]^{d^{j-1}} & F^j \ar[r]^{d^j}                        & F^{j+1} \ar[r]^{d^{j+1}} & F^{j+2} \ar[r]^{d^{j+2}}  & \cdots 
} \]
where $\varphi\colon P(x)\to\ker(d^j)\subset F^j$ is such that the induced map $P(x)\to H^j(F)$ is non-zero --- this is possible due to $e_x(H^j(F))\neq0$. In particular, $\varphi$ does not factor through $d^{j-1}$. Furthermore we have $\Hom(P(t),F^j)=0$, as $F^j$ is supported off $t$. Hence the morphism of complexes $(0,\varphi)$ is not null-homotopic and thus $\Hom^\bullet(S(t),\jincl F) \neq 0$.

Again by \hyref{Proposition}{prop:idempotent-sod}, we obtain a weak recollement
\[
\wrecoll{\sod{S(y)_{y\in T_0\setminus\{t\}}}}
        {\Db(T\tack\Lambda)_{\jincl F}}
        {\Db(\Lambda)}
\]
Replacing $F$ in \eqref{eq:adj2} by an arbitrary $M \in \Db(\Lambda)$ shows that the outer parts are mutually orthogonal, hence $\Db(T\tack\Lambda)_{\jincl F} \cong \Db(\Lambda) \oplus \sod{S(y)_{y\in T_0\setminus\{t\}}}$. 
The right-hand summand is equivalent to $\Db(\kk T/e_t)$ because of $\Db(\kk T) = \sod{S(y)_{y\in T_0}}$, and $t$ being a sink (so that the simple $S(t)$ comes last in the exceptional sequence).
\end{proof}

\begin{remark}
Following the argument of \hyref{Remark}{rem:insertion-sphericals-sod}, condition \SOD\ is again met.
\end{remark}

\begin{remark}
It would be interesting to extend \hyref{Theorem}{thm:tacking-sphericals} to more general upper triangular matrix algebras.
\end{remark}

\begin{remark} \label{rem:algebraic-blowup}
As Lemmas~\ref{lem:insertion-basics} and \ref{lem:tacking-basics} show, both constructions presented in this section share the following formal properties:
\begin{itemize}
\item They produce a new algebra from an algebra and combinatorial data.
\item The derived category of the new algebra is a recollement of the derived category of the old algebra and an exceptional sequence.
\end{itemize}
These properties are reminiscent of blowing up smooth, projective varieties in points; see \cite[\S 11.2]{Huybrechts}. The analogy is a bit closer: let $\Lambda$ be the algebra given by the Beilinson quiver
 $\xymatrix@C=1.4em@1{ 0 \ar@3[r] & 1 \ar@3[r] & 2 }$
(with commutativity relations), then $\Db(\Lambda)\cong\Db(\IP^2)$, by Beilinson's famous equivalence. This corresponds to the exceptional sequences $\sod{P(0), P(1), P(2)}$ and $\sod{\OO,\OO(1),\OO(2)}$ for $\Lambda$ and $\IP^2$, respectively. Blowing up $\IP^2$ in a point produces a strong exceptional sequence $\sod{\OO_E(-1)[-1],\OO,\OO(1),\OO(2)}$, which corresponds exactly to $A_1\tack_n\Lambda$ with $n(0)=1$ and $n(1)=n(2)=0$.
\end{remark}

\begin{lemma} \label{lem:arbitrary_posets}
Let $P$ be a finite poset. Then there exists a finite-dimensional hereditary algebra $\Lambda$ such that $P\subseteq \poset_1(\Lambda)$ as a subposet.
\end{lemma}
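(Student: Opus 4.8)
\emph{Reduction to subquiver inclusions.} The plan is to separate the combinatorics of $P$ from its categorical realisation. Sending each $p$ to its principal down-set $\downarrow p=\{q\in P\mid q\le p\}$ gives an order-embedding $P\into(2^P,\subseteq)$, since $p\le p'$ precisely when $\downarrow p\subseteq\downarrow p'$. Thus it suffices to produce, inside a single $\Db(\kk Q)$ for an acyclic quiver $Q$, a family of $1$-spherelike objects $\{E_p\}_{p\in P}$ with
\[ \Db(\kk Q)_{E_p}\subseteq\Db(\kk Q)_{E_{p'}} \iff p\le p', \]
and with the $\Db(\kk Q)_{E_p}$ pairwise distinct. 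The down-set encoding suggests realising each spherical subcategory as the full subcategory $\Db(\kk Q_p)$ on a full subquiver $Q_p\subseteq Q$ depending monotonically on $p$, so that subcategory inclusion is reduced to subquiver inclusion and incomparabilities to genuine non-containment of subquivers.

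\emph{The two ingredients.} First, a combinatorial step: realise $P$ as the inclusion poset of $n=|P|$ full subquivers $Q_p\subseteq Q$, each of Euclidean (tame) type $\tilde D$ or a star, arranged so that $Q_p\subseteq Q_{p'}\iff p\le p'$. One gains the needed flexibility by building $Q$ as a branched tree with many ``fork'' leaves and realising each $Q_p$ by a choice of forks and a connecting path, so that chains are obtained by growing a central arm (as in \hyref{Corollary}{cor:increasing_heights}) while incomparable elements correspond to subquivers sharing only a common part. Second, a categorical step: since $\kk Q_p$ is tame hereditary it has homogeneous tubes, so a quasi-simple $E_p$ in such a tube is $1$-spherical in $\Db(\kk Q_p)$; being an indecomposable brick with one-dimensional self-extension, $E_p$ remains $1$-spherelike in the ambient $\Db(\kk Q)$, and $\Db(\kk Q)$ has a Serre functor because $\gldim\kk Q=1$, so $E_p$ has a Serre dual and lies in $\poset_1(\kk Q)$.

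\emph{Identifying the spherical subcategories.} For each $p$ I would prove $\Db(\kk Q)_{E_p}=\Db(\kk Q_p)$. The inclusion ``$\supseteq$'' is the maximality property of spherical subcategories recalled in \hyref{Section}{sec:prelim} from \cite{HKP}: $\Db(\kk Q_p)$ is a full triangulated subcategory containing $E_p$ in which $E_p$ is spherical. For ``$\subseteq$'' I would use the orthogonality description $\Db(\kk Q)_{E_p}=\lorth Q_{E_p}$ together with the tame analysis of \hyref{Example}{ex:hereditary} applied to $\kk Q_p$: one computes the asphericality $Q_{E_p}$ of $E_p$ regarded in $\Db(\kk Q)$ and shows it is supported on the vertices outside $Q_p$, so that its left orthogonal is exactly $\Db(\kk Q_p)$. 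Granting this, $p\mapsto\Db(\kk Q)_{E_p}$ preserves and reflects inclusions and is injective, yielding the desired induced embedding $P\into\poset_1(\kk Q)$.

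\emph{Main obstacle.} The genuine difficulty is the identification $\Db(\kk Q)_{E_p}=\Db(\kk Q_p)$, i.e.\ showing that the maximal subcategory in which $E_p$ becomes spherical is \emph{exactly} the subquiver's derived category and not something larger: this requires controlling the asphericality $Q_{E_p}$ as $E_p$ is viewed in the ambient (possibly wild) hereditary algebra, and controlling the $\Hom^\bullet$-orthogonality that witnesses incomparability when $p,p'$ are incomparable. A secondary but real subtlety is the combinatorial step: one must exhibit a single acyclic $Q$ whose Euclidean full subquivers reproduce the given inclusion poset while no \emph{unintended} tame full subquiver creates a spurious spherical subcategory that would add comparabilities and spoil the ``induced'' requirement. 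I expect the categorical identification to be the chief technical obstacle, with the combinatorial realisation of $Q$ being intricate bookkeeping that can be organised coordinate-by-coordinate along the chains supplied by \hyref{Corollary}{cor:increasing_heights}.
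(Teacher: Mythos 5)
Your first step --- embedding $P$ into $(2^P,\subseteq)$ via principal down-sets --- is exactly the paper's first step, and you have correctly located the hard part (computing the spherical subcategories on the nose). But the realisation you propose cannot be carried out. You want each $E_p$ to be $1$-spherical in $\Db(\kk Q_p)$ for a full subquiver $Q_p\subseteq Q$, with the $Q_p$ nested according to $P$. Sphericality of $E_p$ forces $\tau E_p\cong E_p$, which for a connected hereditary algebra happens only in the tame (Euclidean) case --- for wild connected quivers $\tau$ acts freely on the regular components, so there are no $1$-spherical objects at all. And a proper \emph{connected} full subquiver of a Euclidean quiver is Dynkin. So you cannot have $Q_p\subsetneq Q_{p'}$ with both connected and tame, and already a two-element chain in $P$ is unreachable by ``growing a central arm''. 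Independently, the identification $\Db(\kk Q)_{E_p}=\Db(\kk Q_p)$ is not what the machinery produces: in every computation in the paper (Propositions \ref{prop:insertion-sphericals} and \ref{prop:tacking-sphericals}, the canonical-algebra case) the spherical subcategory of an embedded spherical object is $\Db(\Lambda)\oplus(\text{an extra exceptional summand})$, strictly larger than the subcategory where the object was born spherical. Your argument for ``$\subseteq$'' --- that the asphericality is supported off $Q_p$, hence its left orthogonal is $\Db(\kk Q_p)$ --- is false as stated: left orthogonals of objects with prescribed support are much bigger than the complementary-support subcategory (for $1\to 2$ one has $P(1)\in\lorth S(2)$, yet $P(1)$ is supported on both vertices).

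The paper escapes both problems by giving up connectedness and giving up the ``subquiver'' picture. It starts from the \emph{decomposable} algebra $\Lambda=\kk Q^1\times\cdots\times\kk Q^m$, one Kronecker factor per element of $P$, each carrying a genuine $1$-spherical quasi-simple $F^i$; it then iteratively performs one-point extensions $A_1\tack_{n_i}(-)$, one new vertex $i$ per element of $P$, where the multiplicity function $n_i$ puts an arrow from $i$ to the $j$-th Kronecker factor precisely when $i\notin\iota(j)$. Proposition \ref{prop:tacking-sphericals} (with $T=A_1$, so the extra summand $\Db(\kk T/e_t)$ vanishes) then computes each $\Db(\Lambda_m)_{\bar F^j}$ exactly: it is $\Db(\Lambda)$ together with the simples $S(i)$ for $i\in\iota(j)$, and the inclusions among these subcategories reproduce $\iota(i)\subseteq\iota(j)$, i.e.\ $i\leq j$. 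If you want to salvage your plan, you should replace ``nested Euclidean full subquivers of one connected quiver'' by this disconnected-base-plus-tacking construction, or at minimum allow the $Q_p$ to be disconnected with one fixed tame component supporting $E_p$ and a varying exceptional part --- at which point you have rediscovered the paper's proof.
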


\begin{proof}
Write $P=(\{1, \ldots, m\}, \leq)$ for the finite poset $P$.
Let $S=2^P$ the set of all subsets of $P$. Then $S=(S, \subseteq)$ is a poset and the map
\[ \iota\colon P \to S, \quad p \mapsto \iota(p) \coloneqq \{ q \in P \mid q \leq p \} \]
is an inclusion of posets. We identify $P$ with its image in $S$.

 We consider the decomposable algebra
 $\Delta \coloneqq \kk Q^1 \times \cdots \times \kk Q^m$
with $m$ factors, where each $Q^i$ is a copy of the Kronecker quiver. We denote the sink and the source of $Q^i$ by $i'$ and $i''$, respectively. 
For any point $[\lambda:\mu] \in \IP^1$, there is a quasi-simple regular $Q^i$-representation $\xymatrix{\kk \ar@<0.6ex>[r]|{\,\lambda\,}  \ar@<-0.6ex>[r]|{\,\mu\,} & \kk }$. As a $\Delta$-module, it is a 1-spherical object of $\Db(\Delta)$. For each component $Q^i$, choose such spherical objects $F^i\in\Db(\Delta)$.

We will repeatedly tack quivers onto $\Delta$, and thereby obtain an algebra $\Lambda=\Delta_m$ with a spherelike poset containing $P$.

The construction is iterative: put $\Delta_0\coloneqq\Delta$. For $\Delta_{i-1}$ already defined, set 
\[ \begin{array}{l @{\hspace{1em}} l}
 \Delta_i \coloneqq A_1 \tack_{n_i} \Delta_{i-1} & \text{with $Q_0(A_1) \coloneqq \{i\}$ and} \\[1ex]
 n_i\colon Q_0(\Delta_{i-1}) \to \IN,              &
n_i(x) = 
\begin{cases}
1 & \text{if $x=j'$ and $i \notin \iota(j)$;}\\
0 & \text{else.}
\end{cases}
\end{array} \]
\hyref{Theorem}{thm:tacking-sphericals} shows that the spherical object $F^j\in\Db(\Delta)$ gives rise to a spherelike object $\bar F^j\in\Db(\Delta_m)$ with spherical subcategory
\[
\Db(\Delta_m)_{\bar F^j} =\mathsf{thick}(S(1'),S(1''),\ldots,S(m'),S(m''), \{ S(i) \text{  for } i \in \iota(j)\})
\]
In particular, 
\[      \Db(\Delta_m)_{\bar F^i} \subsetneq \Db(\Delta_m)_{\bar F^j} 
   \iff \iota(i) \subsetneq \iota(j) 
   \iff i < j, \]
so the claim holds for $\Lambda=\Delta_m$.
\end{proof}

This result has the following immediate consequence:

\begin{corollary} \label{cor:increasing_heights}
There is a sequence of hereditary algebras such that the heights (respectively widths) of their spherelike posets are strictly increasing.
\end{corollary}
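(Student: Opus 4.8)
The plan is to derive the corollary directly from \hyref{Lemma}{lem:arbitrary_posets}, exploiting the fact that both height and width are monotone under passage to induced subposets. Indeed, if $P$ is an induced subposet of $\poset(\DD)$, then every chain of $P$ is a chain of $\poset(\DD)$ and every antichain of $P$ is an antichain of $\poset(\DD)$, so $\operatorname{height}(P)\leq\operatorname{height}(\poset(\DD))$ and $\operatorname{width}(P)\leq\operatorname{width}(\poset(\DD))$. Thus it suffices to exhibit, for each $n$, a hereditary algebra whose spherelike poset has an induced subposet of height (resp.\ width) at least $n$, and the lemma supplies exactly such subposets.

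For the height statement I would take $P=C_n$, the totally ordered chain on $n$ elements, which has height $n$. Applying \hyref{Lemma}{lem:arbitrary_posets} to $P=C_n$ produces a finite-dimensional hereditary algebra $\Lambda^{(n)}$ with $C_n\subseteq\poset_1(\Lambda^{(n)})\subseteq\poset(\Lambda^{(n)})$ as an induced subposet, whence $h_n\coloneqq\operatorname{height}(\poset(\Lambda^{(n)}))\geq n$. For the width statement I would instead take $P=A_n$, the $n$-element antichain (no two distinct elements comparable), which has width $n$; the same lemma then yields a hereditary algebra with $w_n\coloneqq\operatorname{width}(\poset(\Lambda^{(n)}))\geq n$. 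These are precisely the families foreshadowed by the Hasse diagrams $\poset(\Lambda_3),\poset(\Lambda_4),\poset(\Lambda_5)$ in the introduction.

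The only remaining point is to pass from ``$\geq n$'' to a genuinely strictly increasing sequence. Since $h_n\geq n$, the sequence $(h_n)_n$ is unbounded; starting from $n_1=1$ and, given $n_k$, choosing any index $n_{k+1}>h_{n_k}$ forces $h_{n_{k+1}}\geq n_{k+1}>h_{n_k}$ (and automatically $n_{k+1}>n_k$, since $h_{n_k}\geq n_k$). Thus the subfamily $(\Lambda^{(n_k)})_k$ has strictly increasing heights, and the identical extraction applied to $(w_n)_n$ yields a sequence of hereditary algebras with strictly increasing widths.

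There is no genuine obstacle here; the corollary is essentially immediate once \hyref{Lemma}{lem:arbitrary_posets} is in hand. The single subtlety worth flagging is that the lemma only guarantees the ambient height (resp.\ width) to be \emph{at least} $n$ rather than exactly $n$, which is precisely why one passes to a subsequence rather than asserting $h_n=n$.
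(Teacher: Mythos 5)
Your proposal is correct and takes exactly the route the paper intends: the paper states the corollary as an immediate consequence of Lemma~\ref{lem:arbitrary_posets}, applied to chains for height and antichains for width. Your extra care in extracting a strictly increasing subsequence (since the lemma only guarantees height/width at least $n$) is a valid and welcome tightening of a step the paper leaves implicit.
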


\begin{example}[Posets of increasing heights] \label{ex:kronecker-tacking}
For any $i\in\IN$, consider the poset $P_i = \{1<2<\cdots<i\}$. We give a concrete description for the algebras $\Delta_i$ constructed in  proof: set $\Delta_0 = \kk Q^1 \times \cdots \times \kk Q^i$ where the $Q^j$ are Kronecker quivers, and $\Delta_j = A_1 \tack \Delta_{j-1}$. Tacking multiplicities in each step are 1 on smaller Kronecker sinks, and 0 else. By the lemma, $P_i \subseteq \poset_1(\Delta_i)$.

Below we show the quivers for $i=3$.
The $A_1$ tacked on is indicated via the dashed arrows.

\medskip
\noindent
\parbox[t]{0.3\textwidth}{
$\Delta_1: \xymatrix@R=2ex@C=1.85em{
 3'' \ar@<0.5ex>[r] \ar@<-0.5ex>[r] & 3' &   \\
 2'' \ar@<0.5ex>[r] \ar@<-0.5ex>[r] & 2' &   \\
 1'' \ar@<0.5ex>[r] \ar@<-0.5ex>[r] & 1' & 1 
} $
}
\hfill
\parbox[t]{0.3\textwidth}{
$\Delta_2: \xymatrix@R=2ex@C=1.85em{
 3'' \ar@<0.5ex>[r] \ar@<-0.5ex>[r] & 3' &   \\
 2'' \ar@<0.5ex>[r] \ar@<-0.5ex>[r] & 2' & 2 \ar@{-->}[dl] \\
 1'' \ar@<0.5ex>[r] \ar@<-0.5ex>[r] & 1' & 1 
} $
}
\hfill
\parbox[t]{0.3\textwidth}{
$\Delta_3: \xymatrix@R=2ex@C=1.85em{
 3'' \ar@<0.5ex>[r] \ar@<-0.5ex>[r] & 3' & 3 \ar@{-->}[ld] \ar@{-->}[ldd] \\
 2'' \ar@<0.5ex>[r] \ar@<-0.5ex>[r] & 2' & 2 \ar[dl] \\
 1'' \ar@<0.5ex>[r] \ar@<-0.5ex>[r] & 1' & 1 
} $
}
\end{example}

\begin{remark}
Also in geometry, one can construct a poset of arbitrary height. For example, by iteratively blowing up a $(-2)$-curve on a smooth projective surface.
\end{remark}

\begin{example}[Poset containing a cycle] \label{ex:poset_cycle}
\hyref{Lemma}{lem:arbitrary_posets} gives a recipe to construct an algebra $\Delta$ whose spherical poset contains a cycle, i.e.\ the poset $P=\{1,2,3,4\}$ with predecessor sets
 $\iota(1) = \{1\}, \iota(2) = \{1,2\}, \iota(3) = \{1,3\}, \iota(4) = \{1,2,3,4\}$.
Following the proof of the lemma, we need four copies of the Kronecker quiver, to which four $A_1$-quivers are tacked:
\[ 
\parbox{0.3\textwidth}{
\qquad \xymatrix@=1.0ex{
  & 4 \ar@{-}[dl] \ar@{-}[dr] &   \\
  2 &                         & 3 \\
  & 1 \ar@{-}[ul] \ar@{-}[ur] &   
}

Hasse diagram of $P$
}
\qquad\qquad
\parbox{0.5\textwidth}{
\xymatrix@R=2ex{
 2'' \ar@<0.6ex>[r] \ar@<-0.6ex>[r] & 2'              & 1                         & 4 ' & 4'' \ar@<0.6ex>[l] \ar@<-0.6ex>[l] \\ 
                                    & 3 \ar[u] \ar[d] & 4 \ar[ul] \ar[dr] \ar[dl] & \\
 1'' \ar@<0.6ex>[r] \ar@<-0.6ex>[r] & 1'              & 2 \ar[l] \ar[r]           & 3 ' & 3'' \ar@<0.6ex>[l] \ar@<-0.6ex>[l] \\
} 
}
\]

\medskip\noindent
As is clear from inspecting the resulting quiver, the algorithm is not optimal. For example, the disconnected vertex $1$ is superfluous.
\end{example}

\subsection{Spherical objects from cluster-tilting theory} \label{sub:general}
We have seen two constructions that produce spherelike objects out of spherical ones. In this section, we use a general recipe of Keller \& Reiten \cite{Keller-Reiten} to obtain spherical objects. Their construction yields algebras $\Lambda$, which have perfect derived categories $\Kb(\projectives{\Lambda})$ containing $(d+1)$-Calabi--Yau objects. For $d=1$ and $d=2$, there are always $(d+1)$-spherical objects among these $(d+1)$-Calabi--Yau objects.

Let $\EE$ be a $\kk$-linear, idempotent complete Frobenius category, such that $\projectives{\EE}=\add(P)$ for some $P \in \EE$. Let
 $\CC=\underline{\EE} \coloneqq \EE/\projectives{\EE}$
be the associated stable category, which is triangulated by work of Happel \cite{Happel}. We assume that $\CC$ is Hom-finite and $d$-Calabi--Yau, i.e.\ $[d]$ is a Serre functor for $\CC$. An object $T \in \CC$ is called \emph{$d$-cluster-tilting object} if
\[  \add(T) = \{N\in\CC \mid \Ext^i_{\CC}(T,N)=0 \text{ for } 1\leq i\leq d-1\} . \] 
The \emph{higher Auslander algebra} $\Lambda$ and the \emph{cluster-tilted algebra} $\uLambda$ of $T$ are:
\[ \Lambda = \End_{\EE}(P \oplus T) \qquad\text{and}\qquad \uLambda=\End_\CC(T) . \]
See \cite{Iyama} for $\Lambda$, and note that $\uLambda$ is the quotient algebra of $\Lambda$ by the ideal of morphisms factoring through $\add(P)$.

By \cite[Thm.~5.4(c)]{Keller-Reiten}, every $\uLambda$-module has finite projective dimension as a $\Lambda$-module. Therefore, the subcategory $\DD^b_\smuLambda(\Lambda) \subseteq \Db(\Lambda)$ of complexes with cohomologies in $\modules{\uLambda}$ is contained in $\Kb(\projectives{\Lambda})$. This subcategory is equal to the thick subcategory
 $\mathsf{thick}(\modules{\uLambda}) \subseteq \Kb(\projectives{\Lambda})$
generated by finite-dimensional $\uLambda$-modules. In particular, this subcategory is Hom-finite.

Moreover, again by \cite[Thm.~5.4(c)]{Keller-Reiten}, every object of $\mathsf{thick}(\modules{\uLambda})$ is a $(d+1)$-Calabi--Yau object of $\Kb(\projectives{\Lambda})$. It seems to be unknown whether $\mathsf{thick}(\modules{\uLambda})$ is a $(d+1)$-Calabi--Yau category.

\begin{lemma}\label{lem:Keller-Reiten}
  With the notation introduced above, let $d=1$ or $d=2$. Let $F$ be a $\uLambda$-module satisfying $\End_\smuLambda(F, F)=\kk$ and $\Ext^1_\smuLambda(F, F)=0$.
\begin{enumerate}
\item  The object $F$ is $(d+1)$-spherical in
       $\mathsf{thick}(\modules{\uLambda}) \subseteq \Kb(\projectives{\Lambda})$.
\item If $\Lambda$ is finite-dimensional, then $F$ is $(d+1)$-spherical in $\Kb(\projectives{\Lambda})$.
\end{enumerate}
\end{lemma}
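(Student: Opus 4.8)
The plan is to carry out everything inside the triangulated category $\mathcal{T} \coloneqq \DD^b_\smuLambda(\Lambda) = \thick(\modules{\uLambda})$. Since this is a full subcategory of $\Db(\Lambda)$, all graded homomorphisms are computed as $\Ext^\bullet_\Lambda$ between the $\uLambda$-modules involved. The key external input I would invoke is the Calabi--Yau structure of Keller \& Reiten \cite{Keller-Reiten}: $\mathcal{T}$ is $(d+1)$-Calabi--Yau, so its Serre functor is the shift $[d+1]$. This at once supplies the Serre-dual condition $\SSS F \cong F[d+1]$ and the Serre duality $\Ext^i_{\mathcal T}(F,F) \cong \Ext^{d+1-i}_{\mathcal T}(F,F)^*$. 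Consequently the whole statement reduces to the spherelike condition: it suffices to prove $\Hom^\bullet_{\mathcal T}(F,F) \cong \kk \oplus \kk[-(d+1)]$.

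First I would pin down the two extreme degrees. In degree $0$ a $\Lambda$-linear endomorphism of the $\uLambda$-module $F$ is the same as a $\uLambda$-linear one, so $\Hom_{\mathcal T}(F,F) = \End_\Lambda(F) = \End_\smuLambda(F) = \kk$ by hypothesis; Serre duality then forces $\Ext^{d+1}_{\mathcal T}(F,F) \cong \End_{\mathcal T}(F)^* = \kk$. Negative $\Ext$-groups vanish because $F$ is a module, and $\Ext^i_{\mathcal T}(F,F) = 0$ for $i>d+1$ follows from Serre duality. The crux is the vanishing of the intermediate groups $\Ext^i_{\mathcal T}(F,F)$ for $1 \le i \le d$. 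Here I would first establish the comparison $\Ext^1_\Lambda(F,F) \cong \Ext^1_\smuLambda(F,F)$: a class in $\Ext^1_\Lambda(F,F) = \Hom_{\Db(\Lambda)}(F,F[1])$ is a triangle $F \to E \to F \to F[1]$ whose middle term $E$ lies in the thick subcategory $\mathcal T$ and, being concentrated in degree $0$ by the long exact cohomology sequence, is a $\uLambda$-module; thus the extension is one of $\uLambda$-modules, and conversely. Hence $\Ext^1_{\mathcal T}(F,F) \cong \Ext^1_\smuLambda(F,F) = 0$. This is exactly where the restriction $d \le 2$ enters: for $d=1$ the only intermediate degree is $i=1$, while for $d=2$ the remaining group $\Ext^2_{\mathcal T}(F,F) \cong \Ext^1_{\mathcal T}(F,F)^*$ vanishes by Serre duality. (For $d \ge 3$ a self-dual middle group would survive and could not be controlled by $\Ext^1_\smuLambda(F,F)$ alone.) Assembling the degrees shows that $F$ is $(d+1)$-spherelike, and together with $\SSS F \cong F[d+1]$ that $F$ is $(d+1)$-spherical in $\mathcal T$.

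Finally, for the statement over $\Kb(\projectives{\Lambda})$ when $\Lambda$ is finite-dimensional, I would note that $\mathcal T$ is a full subcategory of $\Kb(\projectives{\Lambda})$ by \cite[Thm.~5.4(c)]{Keller-Reiten}, so the graded endomorphism algebra of $F$ is unchanged and $F$ stays $(d+1)$-spherelike there. It then only remains to check that the Serre dual in the ambient category still satisfies $\Naka F \cong F[d+1]$; this is the $(d+1)$-Calabi--Yau-object property of $\uLambda$-modules guaranteed by Keller \& Reiten (using that the higher Auslander algebra $\Lambda$ is Gorenstein, so that $\Kb(\projectives{\Lambda})$ carries the Nakayama Serre functor). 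I expect the main obstacle to be the careful invocation of this Calabi--Yau structure and its compatibility between $\mathcal T$ and $\Kb(\projectives{\Lambda})$, rather than the essentially formal $\Ext$-comparison and the degree bookkeeping, which are routine once the Serre duality is in hand.
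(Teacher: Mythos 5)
Your proposal is correct and follows essentially the same route as the paper: both rest on Keller--Reiten's Theorem 5.4(c) giving the $(d+1)$-Calabi--Yau property, deduce the degree-$0$ and degree-$(d+1)$ parts from $\End_\smuLambda(F)=\kk$ plus duality, kill the extreme degrees (the paper via the bound $\projdim_\Lambda F\leq d+1$, you directly via Serre duality), identify $\Ext^1_\Lambda(F,F)$ with $\Ext^1_\smuLambda(F,F)$ because $\Modules{\uLambda}\subseteq\Modules{\Lambda}$ is extension-closed (the paper cites Oort, you re-derive it from the triangle), and handle the remaining middle degree for $d=2$ by the self-duality $\Hom(F,F[2])\cong\Hom(F,F[1])^*$. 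The differences are only cosmetic.
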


\begin{proof}
We already know that $F$ has the required Calabi--Yau property. It remains to show that $F$ is $(d+1)$-spherelike.

We have $\kk \cong \End(F) \cong \Hom(F, F[d+1])^{*}$ by our assumptions and the functorial CY-isomorphism: $\Hom_\Lambda(F,\blank) \isom \Hom_\Lambda(\blank,F[d+1])^*$. This isomorphism also implies that $F$ has projective dimension at most $d+1$ as a $\Lambda$-module. Hence $\Hom(F, F[n])=0$ for all $n<0$ and $n>d+1$. Since $\Modules{\uLambda} \subseteq \Modules{\Lambda}$ is closed under extensions, it is well-known that $\Ext^1_\smuLambda(X, Y)= \Ext^1_\Lambda(X, Y)$ for $\uLambda$-modules $X, Y$, see e.g.\ \cite[Lem.~3.2]{Oort}. By our assumption this completes the proof for $d=1$. For $d=2$, the claim follows from the CY isomorphism: $0=\Hom(F, F[1]) \cong \Hom(F, F[2])^{*}$.
\end{proof}

As a consequence we obtain the following.

\begin{proposition} \label{prop:cluster-sphericals}
Let $d=1$ or $d=2$ and assume that $\Lambda$ is finite-dimensional. Then every simple $\underline{\Lambda}$-module $S$ is $(d+1)$-spherical in $\Kb(\projectives{\Lambda})$.
\end{proposition}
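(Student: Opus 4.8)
The plan is to deduce \hyref{Proposition}{prop:cluster-sphericals} directly from \hyref{Lemma}{lem:Keller-Reiten} by verifying its hypotheses for the specific choice $F = S$, a simple $\uLambda$-module. The lemma requires exactly two conditions on $F$, namely $\End_\smuLambda(F,F)=\kk$ and $\Ext^1_\smuLambda(F,F)=0$, and once these hold the conclusion is immediate from the final assertion of the lemma (using that $\Lambda$ is finite dimensional). So the entire task reduces to checking these two vanishing/dimension statements for simple modules over the cluster-tilted algebra $\uLambda$.

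First I would treat the endomorphism condition. Since $\kk$ is algebraically closed and $S$ is a simple module, Schur's lemma gives $\End_\smuLambda(S,S)=\kk$ with no extra input needed; this is the easy half. The substantive part is showing $\Ext^1_\smuLambda(S,S)=0$ for \emph{every} simple $\uLambda$-module $S$. Equivalently, by the description of quivers of basic algebras recalled in the conventions, this says the quiver $Q(\uLambda)$ of the cluster-tilted algebra has no loops. I would therefore reduce the proposition to the statement that cluster-tilted algebras (in the $d=1$ and $d=2$ cases arising from $d$-cluster-tilting objects in stably $d$-Calabi--Yau categories) have loop-free quivers.

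The hard part will be establishing this loop-freeness, i.e.\ that $\Ext^1_\smuLambda(S,S)=0$. I expect this to rest on a known structural result about (higher) cluster-tilted algebras: the Calabi--Yau symmetry forces the Ext-quiver to be loop-free. Concretely, one can argue that a loop at a vertex would produce a nonzero self-extension of the corresponding simple, and then use the $d$-Calabi--Yau property of $\CC=\underline\EE$ together with the Keller--Reiten isomorphism \eqref{E:CY} to derive a contradiction with the graded endomorphism algebra of $T$ being basic with radical square zero in the relevant degrees; alternatively one cites the no-loops property of cluster-tilted algebras directly (this is classical for $d=1$, due to Buan--Marsh--Reiten, and has analogues for $d=2$). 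Thus the main obstacle is locating or reproving the precise no-loops statement in the generality needed here.

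Once $\Ext^1_\smuLambda(S,S)=0$ is in hand, I would simply invoke \hyref{Lemma}{lem:Keller-Reiten} with $F=S$: its hypotheses are met, so $S$ is $(d+1)$-spherical in $\DD^b_\smuLambda(\Lambda)$, and since $\Lambda$ is assumed finite dimensional the lemma upgrades this to $(d+1)$-sphericality in $\Kb(\projectives{\Lambda})$, which is exactly the assertion of the proposition. The proof is therefore short modulo the input on loop-free quivers of cluster-tilted algebras.
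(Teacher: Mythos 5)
Your overall strategy matches the paper's: apply \hyref{Lemma}{lem:Keller-Reiten} to $F=S$, get $\End(S)=\kk$ from Schur's lemma, and reduce everything to $\Ext^1_\smuLambda(S,S)=0$. The gap is in how you propose to establish that last vanishing. You want to quote (or re-derive from the Calabi--Yau symmetry) a ``no loops'' theorem for the cluster-tilted algebra $\uLambda$ itself. No such theorem is available in the generality needed here, and in fact the statement is \emph{false} without the hypothesis that the higher Auslander algebra $\Lambda=\End_\EE(P\oplus T)$ is finite dimensional: for $2$-cluster-tilting objects in stable categories of maximal Cohen--Macaulay modules over one-dimensional hypersurface singularities (the setting of \cite{BIKR}, used in \hyref{Remark}{rem:cluster-tilting-connection}), the quivers of the stable endomorphism algebras can have loops --- there $\Lambda$ is an order over the complete local ring and is not finite dimensional over $\kk$. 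Your sketch (``contradiction with the graded endomorphism algebra of $T$ being basic with radical square zero in the relevant degrees'') does not produce a contradiction from the CY property alone: a loop at a vertex only gives a nonzero element of $\mathrm{rad}\End(T_i)$ not in $\mathrm{rad}^2$, which the $d$-CY structure does not forbid. Also, the attribution is off: Buan--Marsh--Reiten's no-loop/no-$2$-cycle result concerns $2$-cluster-tilted algebras coming from cluster categories of hereditary algebras, not the $d=1$ case and not arbitrary stably $d$-CY Frobenius categories.

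The paper's route avoids all of this by \emph{not} working over $\uLambda$. Since $\Modules{\uLambda}\subseteq\Modules{\Lambda}$ is closed under extensions, $\Ext^1_\smuLambda(S,S)=\Ext^1_\Lambda(S,S)$ (this identification is already made in the proof of \hyref{Lemma}{lem:Keller-Reiten}). By \cite[Thm.~5.4(c)]{Keller-Reiten}, $S$ has finite projective dimension as a $\Lambda$-module, and $\Lambda$ is finite dimensional by hypothesis, so the strong no loop conjecture, proved by Igusa--Liu--Paquette \cite{Igusa-Liu-Paquette}, gives $\Ext^1_\Lambda(S,S)=0$. Note that one cannot apply that theorem to $\uLambda$ directly, since $S$ typically has infinite projective dimension over the (merely Gorenstein) algebra $\uLambda$. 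The fact that your argument for the $\Ext^1$-vanishing never invokes the finite-dimensionality of $\Lambda$ is the clearest sign that a key ingredient is missing.
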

\begin{proof}
We can apply \hyref{Lemma}{lem:Keller-Reiten}. Indeed $\End(S)=\kk$ by Schur's Lemma and $\Ext^1_\smuLambda(S, S)=\Ext^1_\Lambda(S, S)$ (see proof of \hyref{Lemma}{lem:Keller-Reiten}). Since $\Lambda$ is finite-dimensional and $S$ has finite projective dimension as a $\Lambda$-module, the latter Ext-group vanishes by the validity of the strong no loops conjecture, see \cite{Igusa-Liu-Paquette}. This completes the proof.
\end{proof}

\begin{remark} \label{rem:no-loops}
Here is a more general criterion for arbitrary $d>0$: let $T'$ be an indecomposable direct summand of $T$  and let $S'$ be the simple $\Lambda$-module corresponding to the projective module $P' \coloneqq \Hom_\Lambda(P\oplus T, T')$. Then the following statements are equivalent:
\begin{enumerate}
\item $S'$ is a $(d+1)$-spherical $\Lambda$-module;
\item \label{it:loop-summand} 
      in the $d$-AR sequence starting and ending at $T'$,
      none of the inner terms contain $T'$ as a direct summand.
\end{enumerate}
The existence of a $d$-AR sequence starting and ending in $T'$ is the CY-condition, whereas the absence of $T'$ as an inner term means that \emph{$T$ has no loops at $T'$}; see \cite[Def.~5.4]{Iyama-Yoshino}.
To see the equivalence of these statements, apply $\Hom_\Lambda(P\oplus T,\blank)$ to the AR sequence starting and ending in $T'$, yielding a projective resolution of $S'$. 
\end{remark}

\begin{example}[Auslander algebra] \label{ex:Auslander}
Let $R\coloneqq \kk[x]/(x^3)$. By definition, the Auslander algebra of $\EE=\modules{R}$ is $\Lambda\coloneqq \End_R(R\oplus\kk[x]/(x^2)\oplus\kk[x]/(x))$. It is a higher Auslander algebra with quiver
\[
\xymatrix@C=1.3em{
          1 \ar@<0.5ex>[r]^a &
          2 \ar@<0.5ex>[r]^b \ar@<0.5ex>[l]^c & 
          3 \ar@<0.5ex>[l]^d }
\]
and relations $c a=0$ and $ac-db=0$. Here, the vertices $1$, $2$ and $3$ correspond to the $R$-modules $\kk[x]/(x)$, $\kk[x]/(x^2)$ and $R$. Moreover, $a$ and $b$ are inclusions whereas $c$ and $d$ are projections.
One can check that the simple $\Lambda$-modules $S(1)$ and $S(2)$ are $2$-spherical objects in $\Db(\Lambda)$, in accordance with \hyref{Proposition}{prop:cluster-sphericals} for $d=1$. This example generalises to $R = \kk[x]/(x^n)$.
 \end{example}
 
 \begin{example}[Preprojective algebra] \label{ex:RelCluster}
The preprojective algebra $\Pi\coloneqq\Pi(A_3)$ of type $A_3$ is given by the quiver
\[
\xymatrix@C=1.3em{
          1 \ar@<0.5ex>[r]^a &
          2 \ar@<0.5ex>[r]^b \ar@<0.5ex>[l]^c & 
          3 \ar@<0.5ex>[l]^d }
\]
with relations $ca = 0 = bd$ and $ac-db=0$. Then $\EE=\modules{\Pi}$ is a Frobenius category. The stable category $\stmodules{\Pi}$ is a $2$-Calabi--Yau triangulated category, known as the $2$-cluster category of type $A_3$, see \cite{BMRRT, Geiss-Leclerc-Schroer}. More generally, one could take the triangulated categories arising from preprojective algebras of acyclic quivers $Q$ and Weyl group elements $w \in W_{Q}$, see \cite{BIRSc, Geiss-Leclerc-Schroer}.
\[
 T = T_{1} \oplus T_{2} \oplus T_{3} \coloneqq
     1 \oplus \begin{array}{l}\scriptstyle ~2\\[-0.8ex] \scriptstyle 1 \end{array} 
       \oplus \begin{array}{l}\scriptstyle ~2\\[-0.8ex] \scriptstyle1~3 \end{array} 
 \]
is a $2$-cluster-tilting object in $\stmodules{\Pi}$.
We consider the corresponding higher Auslander algebra $\Lambda\coloneqq \End_\Pi(T\oplus \Pi)$ with quiver
\[ \xymatrix{
6 \ar[rr]^h &             & 5 \ar[dr]_e &                       & 4 \ar[dr]^b \ar[ll]_d & \\
            & 2 \ar[lu]^g &             & 3 \ar[ur]_a \ar[ll]^f &                       & 1 \ar[ll]^c \\
} \]
and relations $da-hgf, ed-cb, ac, ba, feh, ehg$.
Here, the vertices $1$ to $3$ correspond to the modules $T_1$ to $T_3$, the vertices $4$ to $6$ correspond to the projective $\Pi$-modules $P(1)$ to $P(3)$. Then the simple $\Lambda$-modules $S(1)$ to $S(3)$ are $3$-spherical objects, in accordance with \hyref{Proposition}{prop:cluster-sphericals} for $d=2$.
\end{example}

\section{Circular quivers}
\label{sec:circular-quivers}

\noindent
Let $Q_n$ be the clockwise oriented circle with $n$ vertices $1,2,\ldots,n$ and arrows $i\xrightarrow{a_i}i+1$, with $n+1=1$.
For any tuple of integers $n>1$, $t \geq 1$ and $r_1, \ldots, r_t$ with $1\leq r_1< \cdots < r_t < n$, we define the following `circular' $\kk$-algebra
\[  C_n(r_1,\ldots ,r_{t})\coloneqq \kk Q_n/(a_{r_2} \cdots  a_{r_1}, \ldots, a_{r_{t}} \cdots a_{{r_{t-1}}}, a_{n} \cdots  a_{r_{t}})  \]
These are finite-dimensional Nakayama algebras of global dimension $t+1$.
As a special case, we introduce the `circular basic algebra' $\CB_n$
\begin{align*}
   \CB_n &\coloneqq  C_n(1, 2, \ldots, n-1).
\end{align*}
In other words, $\CB_n$ has all but one possible zero relations of length two.
\begin{figure}
\tikzcircular
\caption{ \label{fig:quiver:Qn}
Left:   $Q_7$. \qquad
Centre: $\CB_7$. \qquad
Right:  $C_7(5)$.
}
\end{figure}

\begin{lemma} \label{lem:CB}
The simple module $S(1)$ is a $t$-spherical object in $\Db(\CB_{t})$.
\end{lemma}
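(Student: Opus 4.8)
The plan is to compute the minimal projective resolution of $S(1)$ explicitly and to read off both the graded endomorphism algebra and the Serre dual from it.

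First I would record the local structure of $\CB_t$. By construction the only length-two path of $\CB_t$ that does not vanish is $a_1 a_t$, running $t\to 1\to 2$; every other length-two path, and hence every longer path, is zero. Consequently all indecomposable projectives have length two, with $\operatorname{rad}P(i)\cong S(i+1)$, except $P(t)$, which has length three with composition factors $S(t),S(1),S(2)$ from top to socle; dually all indecomposable injectives have length two, with $\operatorname{top}I(i)\cong S(i-1)$ and socle $S(i)$ (indices mod $t$), except $I(2)\cong P(t)$. The two facts I will exploit are $\operatorname{rad}P(t)\cong P(1)$ and $\Naka P(i)=I(i)$, the latter because $\CB_t$ has finite global dimension $t$, so that the Serre functor is $\SSS=\Naka$.

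Next I would splice the short exact sequences $0\to S(i+1)\to P(i)\to S(i)\to 0$ for $i=1,\ldots,t-1$ with the terminating sequence $0\to P(1)\to P(t)\to S(t)\to 0$ coming from $\operatorname{rad}P(t)\cong P(1)$. This gives the minimal projective resolution
\[ 0\to P(1)\to P(t)\to P(t-1)\to\cdots\to P(2)\to P(1)\to S(1)\to 0, \]
of length $t$, whose two end terms are $P(1)$ and whose interior terms are the pairwise distinct projectives $P(2),\ldots,P(t)$, none equal to $P(1)$. Applying $\Hom(-,S(1))$ and using $\Hom(P(i),S(1))=\delta_{i1}\kk$, the resulting complex is $\kk$ in homological degrees $0$ and $t$ and zero in between; since these two degrees are non-adjacent the differential vanishes, so $\Hom^\bullet(S(1),S(1))\cong\kk\oplus\kk[-t]$ and $S(1)$ is $t$-spherelike.

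It then remains to verify the Calabi--Yau condition $\Naka S(1)\cong S(1)[t]$. Applying $\Naka$ termwise to the resolution above and using $\Naka P(i)=I(i)$ yields $\Naka S(1)\cong[\,I(1)\to I(t)\to\cdots\to I(2)\to I(1)\,]$ placed in degrees $-t,\ldots,0$. I would identify this with $S(1)[t]$ by comparing it to the minimal injective coresolution of $S(1)$, built the same way from $0\to S(i)\to I(i)\to S(i-1)\to 0$ together with the terminating $0\to S(2)\to I(2)\to I(1)\to 0$, namely $0\to S(1)\to I(1)\to I(t)\to\cdots\to I(2)\to I(1)\to 0$. The main obstacle is precisely this last identification: a priori $\Naka S(1)$ is only a complex carrying the correct injective terms, and I must rule out spurious cohomology. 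I expect to settle it in one of two ways. Either I check that the differentials $\Naka$ produces coincide, up to rescaling, with those of the coresolution: each consecutive Hom-space $\Hom(P(i+1),P(i))$ and the wrap-around $\Hom(P(1),P(t))$ is one-dimensional, so the minimal complex is rigid and is forced to be $S(1)[t]$. Or, more cheaply, I run a Serre-duality dimension count, computing $\Hom^\bullet(S(1),S(j))$ from the projective resolution and $\Hom^\bullet(S(j),S(1)[t])$ from the injective coresolution, and verifying $\Hom^\bullet(S(j),S(1)[t])\cong\Hom^\bullet(S(1),S(j))^{*}$ for every simple $S(j)$, which pins down $\Naka S(1)\cong S(1)[t]$ and completes the proof that $S(1)$ is $t$-spherical.
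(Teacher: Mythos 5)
Your proof is correct and follows essentially the same route as the paper: the same minimal projective resolution $0 \to P(1) \to P(t) \to \cdots \to P(2) \to P(1) \to S(1) \to 0$, the same computation of $\Hom^\bullet(S(1),S(1))$ via $\Hom(P(i),S(j))=\delta_{ij}\,\kk$, and the same termwise application of the Nakayama functor to obtain the complex of injectives. The only difference is that you spell out why that complex is actually $S(1)[t]$ --- your rigidity argument using the one-dimensionality of the consecutive $\Hom$-spaces (rather than the sketchier Serre-duality dimension count, which by itself would not pin down the object) is the right way to close this step, which the paper simply asserts.
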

\begin{proof}
The minimal projective resolution of $S(1)$ is given as follows 
\[  0 \to P(1) \to P(t) \to \cdots \to P(3) \to P(2) \to P(1) \to S(1) \to 0  . \]
Since $\Hom^\bullet(P(i),S(j)) = \kk \cdot \delta_{ij}$, the object $S(1)$ is $t$-spherelike. Applying the Nakayama functor to $S(1)$, we get a complex of injectives $I(1) \to \cdots \to I(3) \to I(2) \to I(1)$, which has only one non-zero cohomology: namely, $S(1)$ in degree $t$. Hence $S(1)$ is a $t$-spherical object.
\end{proof}

\begin{remark}
Let $\CI_{d}\coloneqq\kk Q_d/(a_2 a_1, a_3 a_2, \ldots, a_1 a_d)$, so that all possible zero relations of length two occur. This is a self-injective algebra, yielding a Frobenius category $\modules{\CI_d}$, which satisfies the conditions from \hyref{Section}{sub:general}. The stable category $\stmodules{\CI_d}$ is a generalized $d$-cluster category of type $A_1$, i.e.\ it is given as a triangulated orbit category $\Db(\kk)/[d]$, see \cite{Keller-orbit}. In particular, it is $d$-CY. Each of the $d$ indecomposable objects $X_i$ in this category is $d$-cluster-tilting and their \emph{relative cluster-tilted algebras} $\End_{\CI_d}(\CI_d \oplus X_i)$ are isomorphic to the algebra $\CB_{d+1}$ from above. This explains why the simple module $S(1)$ is a $(d+1)$-CY object in $\Db(\CB_{d+1})$ (see \hyref{Lemma}{lem:CB}).
\end{remark}

Note that any circular algebra $C=C_n(r_1,\ldots,r_t)$ is built from $\CB_{t+1}$ by simultaneous $\A{r_{i+1}-r_i-1}$-insertions at all vertices.
Then the idempotent $e=\sum_{i=1}^{t+1} e_{r_i}$ yields an isomorphism $\CB_{t+1} \cong eCe$.
By \hyref{Lemma}{lem:insertion-basics}, this induces a fully faithful embedding $\jincl\colon \Db(\CB_{t+1}) \to \Db(C_n(r_1,\ldots,r_t))$.

\begin{proposition} \label{prop:circular-df}
Let $C=C_{n}(r_{1}, \ldots, r_{t})$ be a circular algebra. Then $\jincl(S(1))$ is a $(t+1)$-spherelike object of $\Db(C)$ and its spherical subcategory is the following derived category (with $r_0\coloneqq 0$ and $r_{t+1}\coloneqq n$)
\[  \Db(C)_{\jincl(S(1))} \cong \Db( \CB_{t+1}) \oplus \bigoplus_{i=0}^t \Db(\kk\A{r_{i+1}-r_{i}-2})  . \]
\end{proposition}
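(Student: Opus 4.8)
The plan is to realise $C=C_n(r_1,\ldots,r_t)$ as an iterated $\A{}$-insertion of the circular basic algebra $\CB_{t+1}$ and then to apply \hyref{Proposition}{prop:insertion-sphericals} through the simultaneous-insertion mechanism of \hyref{Remark}{rem:insertion_simultan}. First I would verify that $C$ arises from $\CB_{t+1}$ by inserting, for each arc $i=0,\ldots,t$, a quiver of type $\A{r_{i+1}-r_i-1}$ at the target vertex $r_{i+1}$, thereby stretching the $i$-th arc to length $r_{i+1}-r_i$ (with $r_0=0$ and $r_{t+1}=n$). Equivalently, $eCe\cong\CB_{t+1}$ for the idempotent $e=e_{r_1}+\cdots+e_{r_t}+e_n$ supported on the arc targets, and $C/e\cong\bigoplus_{i}\kk\A{r_{i+1}-r_i-1}$. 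This identification is the combinatorial core of the set-up: every arc of $\CB_{t+1}$ has length one and carries a single length-two zero relation, and the relation-modification rule of the $\A{}$-insertion extends such a relation by $\xi$ once per inserted vertex, producing precisely the length-$(r_{i+1}-r_i+1)$ relations that define $C$; the unique relation-free arc of $\CB_{t+1}$ becomes the relation-free arc of $C$ running from $n$ to $r_1$. By \hyref{Lemma}{lem:insertion-basics} this exhibits $\jincl\colon\Db(\CB_{t+1})\into\Db(C)$ as the fully faithful insertion functor.

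By \hyref{Lemma}{lem:CB} the simple $S(1)$ is $(t+1)$-spherical in $\Db(\CB_{t+1})$, so $\jincl(S(1))$ is $(t+1)$-spherelike in $\Db(C)$ because $\jincl$ preserves graded endomorphism algebras. To put each insertion into case (2) of \hyref{Proposition}{prop:insertion-sphericals}, I would check that $\Hom^\bullet_{\CB_{t+1}}(S(x),S(1))\neq0$ for every arc-target vertex $x\in\{r_1,\ldots,r_t,n\}$. This is a brief Nakayama computation: the syzygies satisfy $\Omega^jS(x)\cong S(x+j)$ until the minimal projective resolution terminates on the projective $P(1)$, and pairing this resolution against $S(1)$ leaves a single copy of $\kk$ for $x\neq1$, together with $\Hom^\bullet(S(1),S(1))=\kk\oplus\kk[-(t+1)]$. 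Hence every insertion is of the properly spherelike type.

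It then remains to assemble the contributions. Each $\A{r_{i+1}-r_i-1}$-insertion at a vertex with nonvanishing $\Hom^\bullet(S(x),S(1))$ contributes, by the duality-adjunction argument of \hyref{Remark}{rem:idempotent-sod}, exactly one summand $\Db(\kk\A{r_{i+1}-r_i-2})$: of the $r_{i+1}-r_i-1$ interior vertices of the $i$-th arc, the single one adjacent to the target $r_{i+1}$ is excluded because its image under $\pi\Naka$ pairs nontrivially with $\jincl(S(1))$, leaving a linearly oriented $\A{r_{i+1}-r_i-2}$. The core $\Db(\CB_{t+1})=\jincl(\Db(\CB_{t+1}))$ survives and is orthogonal to each of these summands, giving the asserted orthogonal decomposition. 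Arcs with $r_{i+1}-r_i\le2$ contribute $\Db(\kk\A{r_{i+1}-r_i-2})=0$, matching the formula.

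The main obstacle is this last step, since \hyref{Remark}{rem:insertion_simultan} only records two simultaneous insertions and leaves the general case to the reader. Performing all $t+1$ insertions at once requires tracking the injectives $I(x_{n-1})$ thrown off by each previous insertion, as in that remark, and checking that the successive left-hand terms of the recollements stay mutually orthogonal and split off as a genuine direct sum rather than as an iterated extension. I expect this bookkeeping, rather than any single homological input, to demand the real care; by contrast the quiver-and-relation identification of the first paragraph, though fiddly on account of the cyclic indexing and the wrap-around arc through vertex $n$, should be routine.
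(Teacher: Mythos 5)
Your proposal follows essentially the same route as the paper: realise $C_n(r_1,\ldots,r_t)$ as a simultaneous $\A{r_{i+1}-r_i-1}$-insertion on $\CB_{t+1}$ via the idempotent supported on the vertices $r_1,\ldots,r_t,n$, verify $\Hom^\bullet_{\CB_{t+1}}(S(x),S(1))\neq0$ for each such vertex using the minimal projective resolution terminating in $P(1)$, and conclude by Proposition~\ref{prop:insertion-sphericals} together with Remark~\ref{rem:insertion_simultan}. The paper's own proof is exactly this (and likewise simply cites Remark~\ref{rem:insertion_simultan} for the many-insertion bookkeeping you rightly flag as the only delicate point), so your argument is correct and matches the intended one.
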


\begin{proof}
This follows from \hyref{Theorem}{thm:insertion-sphericals} and \hyref{Remark}{rem:insertion_simultan}, it just remains to check that $\Hom^\bullet_{\CB_{t+1}}(S(i), S(1)) \neq 0$ for all $i=1, \ldots, t+1$. Indeed,
\[  0 \to P(1) \to P(t+1) \to \cdots \to P(i+2) \to P(i+1) \to P(i) \to S(i) \to 0  \]
is the minimal projective resolution of the simple $\CB_{t+1}$-module $S(i)$.
\end{proof}

\begin{example}
Consider the circular algebra $C\coloneqq C_7(5)=\kk Q_7/(a_7a_6a_5)$ from \hyref{Figure}{fig:quiver:Qn}. It is isomorphic to an $\A{4}$-insertion at vertex $1$ followed by an $\A{1}$-insertion at vertex $2$ for the algebra $\CB_2$. Set $e=e_5+e_7$. 
According to \hyref{Proposition}{prop:circular-df}, the complex of $C$-modules $\jincl(S(1))=P_5 \xrightarrow{\cdot a_4 a_3 a_2a_1a_7} P_7 \xrightarrow{\cdot a_6 a_5} P_5$ is $2$-spherelike and the spherical subcategory is given by
\[  \Db(C)_{\jincl(S(1))} \cong \Db(eCe) \oplus \sod{S(1), S(2), S(3)}
                   \cong \Db(\CB_2) \oplus \Db(\kk\A{3})  . \]
\end{example}

\section{Canonical algebras and weighted projective lines}
\label{sec:canonical_algebras}

\noindent
The derived categories of canonical algebras or, equivalently by tilting, weighted projective lines possess properly spherelike objects in the exceptional tubes. See \cite{Geigle-Lenzing} and \cite{Ringel} for a general reference to this class of algebras.

Fix a weight sequence $p=(p_1,\ldots,p_t)$ with all $p_i\geq2$ and a sequence of distinct points $\lambda=(\lambda_3,\ldots,\lambda_t)$ with $\lambda_i=(a_i:b_i)\in\smash{\IP^1_\kk}$. We denote by 
 $C(p_1,\ldots,p_t;\lambda_3,\ldots,\lambda_t) = C(p;\lambda) = \kk Q(p)/I$
the associated canonical algebra, where $Q(p)$ is the quiver consisting of a source 0, a sink 1 and $t$ paths $\vec{p}_i$ of length $p_i$ from 0 to 1. There are $t-2$ relations for $C(p;\lambda)$, given by $x_i^{p_i} = a_i x_2^{p_2} - b_i x_1^{p_1}$ for $3\leq i\leq t$, where all arrows along the path $\vec{p}_i$ are denoted $x_i$;
see \cite[Sec.\ 4]{Geigle-Lenzing}.
One can assume all $a_i=1$ and $b_i\in\kk$ and, moveover, $b_3=1$. 

The quiver for $C(5,4,6)$:

{\centering
\tikzcanonicalalgebra
\par }

There is a family of tubes in $\Db(C(p;\lambda))$ and among these, there are $t$ tubes of ranks $p_1,\ldots,p_t$ at the points $\lambda_1,\ldots,\lambda_t\in\IP^1$. See \hyref{Figure}{fig:canonical:tubes}. 
For each other point in $\IP^1$ there is a tube of rank one.

\begin{figure}
\tikztubes
\caption{  \label{fig:canonical:tubes}
AR-quivers of a homogoneous tube (left) and an exceptional tube of rank 3 (right) of $C(p,\lambda)$. The dotted edges of the tube are identified. The spherical and spherelike modules are drawn as boxes.}
\end{figure}

\begin{remark}
\label{rem:fracCY}
Let $M \in \Db(C(p;\lambda))$ be an object of a tube of rank $r$. Then $\tau^r M\cong M$ and hence the tube is $\frac{r}{r}$-fractionally Calabi--Yau, i.e.\ $\Naka^r M \cong M[r]$. Since tubes in derived categories of canonical algebras are standard, see \cite[Thm.~1.6, Cor.~1.7]{Skowronski}, we can compute $\Hom^\bullet(M,N)$ using the diagrams in \hyref{Figure}{fig:canonical:tubes} and get for $M$ indecomposable of quasi-length $s$:
\[  \dim \Hom(M,M)   = \left\lceil  \frac{s}{r} \right\rceil  ,  \qquad
    \dim \Ext^1(M,M) = \left\lfloor \frac{s}{r} \right\rfloor .  \]
So for a tube of rank $r$, all indecomposable modules of quasi-length $r$ are 1-spherelike objects, and they are mapped into each other by AR-translations. Moreover, these are spherical if and only if $r=1$.
\end{remark}

Explicitly, for the $i$-th tube of rank $p_i$, we obtain one of the spherelike objects, which we call $F_i$, as the cokernel of its projective resolution
 $P(1) \raisebox{-1pt}{$\xrightarrow{\cdot w}$} P(0)$
where $w=\vec{p}_i$ is the path from $0$ to $1$ along the $i$-th arm.
With this resolution one can check that we obtain indeed spherelike objects for each tube.
Finally, $F_i$ has the following representation: one-dimensional vector spaces on all vertices of $Q_0(p)$; the identity for all arrows of $Q_1(p)$, except for zero on the single arrow on the $i$-th arm going into the sink.

Note that all other spherelike objects in the $i$-th tube can be obtained from $F_i$ by AR-translations.

\begin{proposition}
\label{prop:sphsubcat-canonical}
For $p=(p_1,\ldots,p_t)$ and $\lambda=(\lambda_1,\ldots,\lambda_t)$, the $C(p,\lambda)$-module $F_i$ is a 1-spherelike object of $\Db(C(p;\lambda))$, with spherical subcategory
\[ \Db(C(p;\lambda))_{F_i} \cong \Db(C(p_1,\ldots \hat p_i \ldots,p_t; \lambda_1,\ldots \hat\lambda_i\ldots,\lambda_t)) \oplus \Db(\kk\A{p_i-2}) . \]
\end{proposition}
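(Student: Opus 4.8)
The plan is to put ourselves into the situation of the $\A{n}$-insertion and then quote \hyref{Proposition}{prop:insertion-sphericals} almost verbatim. Write the $i$-th arm of $Q(p)$ as $0\to v_1\to\cdots\to v_{p_i-1}\to1$ and set $C\coloneqq C(p;\lambda)$, $C'\coloneqq C(p_1,\ldots\hat p_i\ldots,p_t;\lambda_1,\ldots\hat\lambda_i\ldots,\lambda_t)$ and $e\coloneqq 1-e_{v_1}-\cdots-e_{v_{p_i-1}}$, the idempotent killing the interior of the $i$-th arm. First I would prove the two algebra isomorphisms $eCe\cong C'$ and $C/e\cong\kk\A{p_i-1}$. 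The latter is clear, since $C/e$ is the linear quiver $v_1\to\cdots\to v_{p_i-1}$. For the former, the only paths of $C$ between kept vertices that pass through the interior of arm $i$ assemble into the single arm-$i$ path $w\colon0\to1$; but the relation $x_i^{p_i}=a_ix_2^{p_2}-b_ix_1^{p_1}$ rewrites $w$ in terms of the arm-$1$ and arm-$2$ paths, so $w$ is redundant and $eCe$ is generated by the remaining arms, i.e.\ it equals $C'$. Using the $\IP^1$-symmetry of the weight points one may assume $i\geq3$ here, so that such a relation is available; the cases $i=1,2$ follow after a M\"obius change of the $\lambda$'s, using $t\geq3$.

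This is exactly the content of \hyref{Lemma}{lem:insertion-basics}(1) with $n=p_i-1$, and the projective-dimension and $\Ext$-vanishing hypotheses of \hyref{Proposition}{prop:recollement} are checked as in that lemma. Hence there is a recollement $\recoll{\Db(\kk\A{p_i-1})}{\Db(C)}{\Db(C')}$ with induction functor $\jincl=Ce\otimes_{C'}\blank$. I would then identify $F_i\cong\jincl G$: its resolution $P(1)\xrightarrow{\cdot w}P(0)$ has both terms of the form $\jincl P_{C'}(\blank)$ and differential $\jincl$ of the corresponding map, so $F_i$ is induced from the $C'$-module $G$ with resolution $P_{C'}(1)\xrightarrow{\cdot w'}P_{C'}(0)$, $w'=a_ix_2^{p_2}-b_ix_1^{p_1}$. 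Since $\lambda_i$ is no longer a weight of $C'$, its tube is homogeneous, so $G$ is the quasi-simple of a rank-one tube and is therefore $1$-spherical (such a tube is $\frac11$-fractionally Calabi--Yau). As $p_i\geq2$ the object $F_i$ is properly $1$-spherelike, so we are precisely in case (2) of \hyref{Proposition}{prop:insertion-sphericals}.

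It then remains to run the duality-adjunction argument of \hyref{Remark}{rem:idempotent-sod}. With $\pi=\Hom(Ce,\blank)$ right adjoint to $\jincl$ and $\Naka$ the Nakayama functor of $C$,
\[
 \Hom^\bullet(S(v_j),\jincl G)=\Hom^\bullet(G,\pi\Naka S(v_j))^* .
\]
On the interior of arm $i$ one has $\Naka S(v_j)\cong S(v_{j+1})[1]$ (with the convention $v_{p_i}\coloneqq1$), exactly as for the inserted simples in the proof of \hyref{Proposition}{prop:insertion-sphericals}. For $j\leq p_i-2$ the target $S(v_{j+1})$ is supported on a deleted vertex, so $\pi\Naka S(v_j)=0$ and $S(v_j)\in\lorth F_i$; moreover this vanishing persists with $G$ replaced by any $M\in\Db(C')$, so the two outer parts are completely orthogonal. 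For $j=p_i-1$ one gets $\pi\Naka S(v_{p_i-1})=\pi S(1)=S(1)$, and the resulting group is nonzero because $G$ has the sink in its support. Therefore $S(v_1),\ldots,S(v_{p_i-2})\in\lorth F_i$ while $S(v_{p_i-1})\notin\lorth F_i$, and just as in \hyref{Proposition}{prop:insertion-sphericals} the surviving simples span $\sod{S(v_1),\ldots,S(v_{p_i-2})}\cong\Db(\kk\A{p_i-2})$. Collecting everything,
\[
 \Db(C)_{F_i}\cong\jincl\Db(C')\oplus\sod{S(v_1),\ldots,S(v_{p_i-2})}\cong\Db(C')\oplus\Db(\kk\A{p_i-2}).
\]

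I expect the one genuinely non-formal step to be the identification $eCe\cong C'$: one must see that deleting the interior of the $i$-th arm, together with the single canonical relation carried by that arm, really collapses arm $i$ away and returns a \emph{smaller} canonical algebra, and one must dispose of the exceptional arms $i=1,2$ by the symmetry of the weight points. Everything after that is the insertion machinery of Section~3 applied word for word.
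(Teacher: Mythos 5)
Your overall strategy coincides with the paper's: realise $F_i$ as $\jincl G$ for a quasi-simple $G$ in a homogeneous tube of the smaller canonical algebra $C'$ (hence $1$-spherical there), and then run the weak-recollement and duality-adjunction machinery of \hyref{Theorem}{thm:projection_functor} and \hyref{Remark}{rem:idempotent-sod}. The paper merely organises the reduction in two stages --- first shrinking arm $i$ to length $2$ and treating its single interior vertex by a direct argument, then viewing the general case as a genuine $\A{}$-insertion at that vertex --- whereas you delete the whole arm at once. That reorganisation is legitimate in principle: your isomorphisms $eCe\cong C'$ and $C/e\cong\kk\A{p_i-1}$ are correct, although $C(p;\lambda)$ is not literally an $\A{p_i-1}$-insertion of $C'$ at any vertex (insertion replaces a vertex, it does not adjoin a new arm between two existing ones), so \hyref{Lemma}{lem:insertion-basics} cannot be quoted verbatim and the recollement hypotheses have to be reverified for your idempotent.

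There is, however, one step that is false as written: the claim $\Naka S(v_{p_i-1})\cong S(1)[1]$. The computation $\Naka S(v_j)\cong S(v_{j+1})[1]$ in the insertion setting relies on the target vertex having a \emph{unique} incoming arrow, so that $I(v_{j+1})\to I(v_j)$ is surjective with kernel $S(v_{j+1})$; this is fine for $j\leq p_i-2$, but the sink $1$ of $C(p;\lambda)$ receives one arrow from each of the $t$ arms, so the kernel of $I(1)\to I(v_{p_i-1})$ is much larger than $S(1)$. Concretely, Serre duality gives $\Hom^\bullet(S(0),\Naka S(v_{p_i-1}))=\Hom^\bullet(S(v_{p_i-1}),S(0))^*=0$ because there is no path from $v_{p_i-1}$ to the source, whereas $\Hom(S(0),S(1)[2])=\Ext^2(S(0),S(1))\neq0$ for $t\geq3$ since the canonical relations run from $0$ to $1$; hence $\Naka S(v_{p_i-1})\not\cong S(1)[1]$. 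This is exactly the point where the analogy with insertion breaks down, and it is why the paper isolates the vertex adjacent to the sink in its first stage. The conclusion you need, $\Hom^\bullet(S(v_{p_i-1}),F_i)\neq0$, is nevertheless true and is obtained in the paper directly: the last arrow of arm $i$ acts as zero on $F_i$, so $S(v_{p_i-1})$ is a submodule of $F_i$ and already $\Hom(S(v_{p_i-1}),F_i)\neq0$. With that substitution (and the reverification of the recollement hypotheses mentioned above) your argument goes through.
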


\begin{corollary}
Given $p=(p_1,\ldots, p_t)$ and $\lambda=(\lambda_1,\ldots,\lambda_t)$ as above, let $N$ the number of pairwise different weights $p_i$. Then the stable spherelike poset of $\Db(C(p;\lambda))$ contains the discrete poset of $N$ elements:
\[ \{\bullet_1\cdots\bullet_N\} \subseteq \posetaut(C(p,\lambda)) .\]
\end{corollary}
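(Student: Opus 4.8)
The plan is to exhibit an explicit antichain of size $N$ inside $\posetaut(C(p,\lambda))$. Write $\DD \coloneqq \Db(C(p;\lambda))$. By the preceding Proposition, each weight $p_i\geq 2$ produces a properly $1$-spherelike module $F_i$ with
\[ \DD_{F_i} \cong \Db(C(p_1,\ldots \hat p_i \ldots,p_t;\lambda_1,\ldots\hat\lambda_i\ldots,\lambda_t)) \oplus \Db(\kk\A{p_i-2}). \]
Choosing one index $i_1,\ldots,i_N$ realising each of the $N$ distinct weight values, I would show that the classes $[\DD_{F_{i_1}}],\ldots,[\DD_{F_{i_N}}]$ are pairwise incomparable, hence form a discrete subposet of $\posetaut(C(p,\lambda))$.

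First I record that all these subcategories have the same Grothendieck rank. Setting $n \coloneqq \rk K_0(\DD) = 2 + \sum_k (p_k-1)$ (the number of vertices of $C(p;\lambda)$), the canonical summand of $\DD_{F_i}$ has rank $n-(p_i-1)$ and the Dynkin summand $\Db(\kk\A{p_i-2})$ has rank $p_i-2$, so $\rk K_0(\DD_{F_i}) = n-1$ for every $i$. Next I treat comparability: suppose there is $\varphi\in\Aut(\DD)$ with $\varphi(\DD_{F_i})\subseteq\DD_{F_j}$, which by \hyref{Lemma}{lem:sphericals_under_equivalences} means $\DD_{\varphi(F_i)}\subseteq\DD_{F_j}$. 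Because $C(p;\lambda)$ has finite global dimension, $\DD$ has a Serre functor, so together with condition \SOD\ every spherical subcategory is admissible in $\DD$ (right-admissibility upgrades to admissibility via Serre duality), and this survives applying $\varphi$. Two admissible subcategories of $\DD$ one contained in the other give an admissible inclusion $\DD_{\varphi(F_i)}\subseteq\DD_{F_j}$ (the required adjoints are obtained by composing the adjoints in $\DD$), hence a semiorthogonal decomposition of $\DD_{F_j}$ whose orthogonal complement splits off $K_0$. Comparing the equal ranks $n-1$ forces this complement to have rank $0$, hence to vanish, so $\varphi(\DD_{F_i})=\DD_{F_j}$ and $[\DD_{F_i}]=[\DD_{F_j}]$. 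Thus any comparability among these classes is in fact an equality.

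It remains to separate the classes for distinct weights. An equality $[\DD_{F_i}]=[\DD_{F_j}]$ yields a triangle equivalence $\DD_{F_i}\isom\DD_{F_j}$, so it suffices to distinguish these categories abstractly. The multiset of ranks of the non-homogeneous tubes is a triangulated invariant: tubes are the connected components of the Auslander--Reiten quiver of shape $\IZ\A{\infty}$ modulo $\Naka$, and their ranks are the periods of the AR translation, all preserved by equivalences. The Dynkin summand $\Db(\kk\A{p_i-2})$ contributes no tubes, so for $\DD_{F_i}$ this multiset equals $\{p_k : k\neq i\}$. These multisets coincide exactly when $p_i=p_j$. Consequently, for $a\neq b$ we have $p_{i_a}\neq p_{i_b}$, the classes $[\DD_{F_{i_a}}]$ and $[\DD_{F_{i_b}}]$ are distinct, and by the previous paragraph they are therefore incomparable. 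This produces the antichain $\{\bullet_1\cdots\bullet_N\}\subseteq\posetaut(C(p,\lambda))$.

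The main obstacle is the incomparability step rather than distinctness: the delicate point is that a containment \emph{up to autoequivalence} must be upgraded to an equality. This is where one genuinely needs that the spherical subcategories are admissible (so that the induced semiorthogonal decomposition splits the Grothendieck group additively) and that a nonzero admissible subcategory of the derived category of a finite-dimensional algebra of finite global dimension has strictly positive $K_0$-rank. One must in particular resist arguing incomparability directly from the tube-rank of $\varphi(F_i)$, since the AR translation of the subcategory $\DD_{F_j}$ need not be the restriction of that of $\DD$, so the tube-rank invariant is only safely applied intrinsically to the abstract categories $\DD_{F_i}$, as above.
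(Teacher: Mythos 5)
Your overall architecture is sound and is essentially the argument the paper leaves implicit: pick one $F_i$ per distinct weight, note from the Proposition that every $\DD_{F_i}$ has corank one in $K_0(\DD)$, upgrade any containment $\varphi(\DD_{F_i})\subseteq\DD_{F_j}$ to an equality by a rank count, and separate the classes by the multiset of ranks of non-homogeneous tubes. The rank computation and the distinctness step are correct. The genuine gap is the last step of your comparability argument: from $\rk K_0=0$ you conclude that the complement of $\varphi(\DD_{F_i})$ inside $\DD_{F_j}$ vanishes by invoking the principle that a nonzero admissible subcategory of $\Db(\Lambda)$, for $\Lambda$ finite-dimensional of finite global dimension, has nonzero $K_0$. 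That principle is exactly the non-existence of phantom subcategories; it is not a citable theorem (it is open in this generality, and its analogue for smooth projective varieties is false, as phantoms with trivial Grothendieck group do exist there). You explicitly identify this as the crux but offer no proof of it, so as written the incomparability claim is not established.

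The gap is repairable with data you already have. Since each $\DD_{F_i}$ has corank one and satisfies \SOD, its orthogonal complement in $\DD$ is generated by a single exceptional object, $\DD_{F_i}\orth=\sod{E_i}$ (cf.\ \hyref{Remark}{rem:insertion-sphericals-sod} and the proof of the Proposition), whence $\varphi(\DD_{F_i})\orth=\sod{\varphi(E_i)}\cong\Db(\kk)$. The complement of $\varphi(\DD_{F_i})$ inside $\DD_{F_j}$ is $\sod{\varphi(E_i)}\cap\DD_{F_j}$, a thick subcategory of $\Db(\kk)$, and $\Db(\kk)$ has only the two trivial thick subcategories. It cannot be all of $\sod{\varphi(E_i)}$, since then $\varphi(E_i)\in\DD_{F_j}$ would force $\DD=\sod{\varphi(\DD_{F_i})\orth,\varphi(\DD_{F_i})}\subseteq\DD_{F_j}$, contradicting that $F_j$ is properly spherelike (its spherical subcategory has rank $n-1<n$). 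Hence the complement is zero and the containment is an equality, with no appeal to a no-phantom theorem. With this substitution your proof is complete; the rest of the write-up, including the caution about applying the tube-rank invariant only intrinsically, is fine.
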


\begin{proof}[Proof of the proposition]
Without loss of generality we can assume $i=t$. We first look at the special case that $p_t = 2$. Denote $C=C(p_1,\ldots,p_{t-1},2;\lambda)$ and let $e$ the idempotent yielding $eCe = C(p_1,\ldots,p_{t-1};\lambda_1,\ldots,\lambda_{t-1})$. Using the relations in $C$, $w=\vec{p}_t$ is also a combination $L=a_t\vec{p}_1+b_t\vec{p}_2$ of paths in $eCe$, so we can write $F=F_t$ as $\jincl E$ for the $eCe$-module
 $E \coloneqq \coker(P_{eCe}(1) \smash{\raisebox{-1pt}{$\xrightarrow{\cdot L}$}} P_{eCe}(0))$,
where $\jincl\colon \CC\coloneqq \Db(eCe)\into\DD\coloneqq\Db(C)$. Since $E$ is a homogenous quasi-simple for $eCe$, it is $1$-spherical in $\CC$.

We have to show $ \DD_{F} \cong \CC$. From \hyref{Theorem}{thm:projection_functor} we get the weak recollement $\wrecoll{\CC\orth\cap{}\orth F}{\DD_F}{\CC}$. Thus we are left to show $\CC\orth\cap{}\orth F = 0$. As in \hyref{Lemma}{lem:insertion-basics}, $\CC\orth = \sod{S}$ with $S = C/e$. Using the representation of $F$, we see that $S$ is a submodule of $F$, so $\Hom(S,F) \cong \kk \neq 0$.

Now denote $\EE \coloneqq \Db(C(p;\lambda))$. Since $C(p;\lambda) = C(p_1,\ldots,p_t;\lambda_1,\ldots,\lambda_t)$ is obtained from $C$ by $\A{p_t-1}$-insertion, \hyref{Lemma}{lem:insertion-basics} gives the recollement $\recoll{\sod{S(1),\ldots,S(p_t-1),S}}{\EE}{\CC}$. Combining the previous calculation with \hyref{Theorem}{thm:projection_functor} yields the weak recollement
\[  \wrecoll{ \sod{S(1),\ldots,S(p_t-1)} \cap {}\orth F }{ \EE_F }{ \CC }  . \]
By arguments analogous to the ones given in the proof of \hyref{Theorem}{thm:insertion-sphericals}, we obtain the statement.
\end{proof}

\begin{remark}
\label{rem:DMcurves}
This statement can be interpreted geometrically, i.e.\ from the point of view of weighted projective lines or of Deligne--Mumford curves: while skyscraper sheaves in ordinary points are spherical, skyscraper sheaves of length 1 supported on points with non-trivial isotropy are exceptional. Moreover, if the local isotropy at a point $x$ has order $p$, then a $p$-fold extension of these exceptional sheaves is a 1-spherelike sheaf. The spherical subcategory of that sheaf contains the weighted/stacky curve with trivial isotropy at $x$.

We mention that the objects at the mouth of exceptional tubes form what is called an exceptional cycle in \cite{BPP}. There is an associated autoequivalence which in this case goes back to Meltzer's tubular mutations \cite{Meltzer}. From the geometric point of view, these are line bundle twists on the stacks.
\end{remark}

\section{Derived-discrete algebras} \label{sec:DDA}

\noindent
We turn to algebras with discrete derived categories, also called `derived-discrete algebras'. These have been introduced by Vossieck \cite{Vossieck} and also classified up to Morita equivalence. Bobi\'nski, Gei\ss\ \& Skowro\'nski \cite{BGS} provide normal forms for the derived equivalence classes of these algebras, and they also compute the Auslander--Reiten quivers of the derived categories.

As we are interested in spherelike objects and their spherical subcategories, we employ the derived normal form of \cite{BGS} and consider the algebras $\Lambda(r,n,m)$ whose bound quiver consists of an oriented cycle of length $n\geq2$ with $1\leq r<n$ consecutive zero relations and, for $m\geq0$, an oriented $\A{m}$-quiver inserted into the last vertex of the cycle part of the $r$ relations.
We stress that $n>r$ translates to finite global dimension of $\Lambda(r,n,m)$.

\begin{center}
\includegraphics[width = 0.6\textwidth]{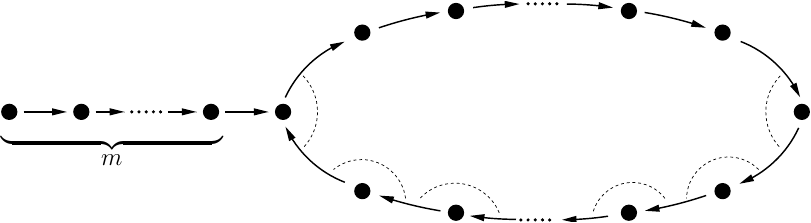}
\end{center}

By \cite{BGS}, the Auslander--Reiten quiver of $\Db(\Lambda(r,n,m))$ consists of $3r$ components $\XX^0,\ldots,\XX^{r-1}$, $\YY^0,\ldots,\YY^{r-1}$, $\ZZ^0,\ldots,\ZZ^{r-1}$ and by abuse of notation we denote the full subcategories of $\Db(\Lambda(r,n,m))$ of objects in an AR-component by the same symbol.

The $\XX$ and $\YY$ components are of type $\IZ A_\infty$, whereas the $\ZZ$ components are of type $\IZ A_\infty^\infty$. Hence there is a notion of height $h(A)$ for indecomposable objects $A$ in the $\XX$ and $\YY$ components; we take height 0 to mean objects at the mouths.
We write $\XX$ for the additive hulls of the union of $\XX^0,\ldots,\XX^{r-1}$, and analogously for $\YY$ and $\ZZ$. The subcategories $\XX$ and $\YY$ are triangulated, and the shift in these categories satisfies 
$X[r] = \tau^{-m-r}(X)$ and $Y[r] = \tau^{n-r}(Y)$ for $X \in \XX$ and $Y \in \YY$, the objects of $\XX$ and $\YY$ are fractionally Calabi--Yau of CY-dimensions $\frac{m}{m+r}$ and $\frac{n}{n-r}$, respectively; see \cite[Thm.~B]{BGS}.

The existence of spherelike objects is fully understood: \cite[Prop.~5.4]{BPP} states that all indecomposable objects in $\ZZ$ are exceptional, and moreover
\[ \begin{array}{rcl}
   X\in\ind\XX \text{ spherelike} & \iff & h(X) = m+r-1; \\
   Y\in\ind\YY \text{ spherelike} & \iff & h(Y) = n-r-1.
\end{array} \]
and then $X$ is $(1-r)$-spherelike, and $Y$ is $(1+r)$-spherelike, respectively.
In certain cases the spherelike objects are actually spherical:
\[ \begin{array}{rcl}
   X\in\ind\XX \text{ spherical} & \iff & m=0, r=1 \text{ and } h(X)=0 ; \\
   Y\in\ind\YY \text{ spherical} & \iff & n=r+1 \text{ and } h(Y)=0.
\end{array} \]

\begin{proposition} \label{prop:spherelikes_DDA}
Let $X\in\XX$ and $Y\in\YY$ be nilpotent properly spherelike objects in $\DD = \Db(\Lambda(r,n,m))$.
Then the spherical subcategories $\DD_X, \DD_Y$ are generated by exceptional sequences and
satisfy \SOT, \SOD. 
Moreover,
\begin{align*}
 \DD_Y &=  \Db(\Lambda(r,n,m))_Y \cong \Db(\kk\A{n-r-2}) \oplus \Db(\Lambda(r,r+1,m)) \text{, and} \\
 \DD_X &=  \Db(\Lambda(1,n,m))_X \cong \Db(\kk\A{m-1}) \oplus \Db(\Lambda(1,n,0)) \text{ if $r=1$}.
\end{align*}
If $r>1$, then the spherical subcategory $\DD_X$ is not the bounded derived category of a finite-dimensional $\kk$-algebra.
\end{proposition}

\begin{proof}
Fix an indecomposable object $X\in\XX$ of height $m+r-1$. Then $X$ is $(1-r)$-spherelike and
 $\omega(X) \coloneqq \nu X[r-1] = \tau X[r] = \tau^{1-m-r}X$,
using that $X[r] = \tau^{-m-r}(X)$. Up to scalars, there is a unique non-zero morphism $X\to\omega(X)$, and its cone is the asphericality $Q_X$. Denote by $E\in\XX$ the unique indecomposable object of height $m+r-2$ admitting an irreducible morphism $E\to X$. Note that $E$ is an exceptional object.

We claim that $Q_X \cong E[1]\oplus E[r]$. For this, we consider the triangle $E\to X\to X_0$, where $X_0\in\ind\XX$ is the unique object with $h(X_0)=0$ and $\Hom(X,X_0)\neq 0$. By duality, there is a non-zero map $\omega(X)\to E[r]$, giving rise to a triangle ${}_0\omega(X) \to \omega(X) \to E[r]$, where now ${}_0\omega(X) \in \ind\XX$ is the unique object with $h({}_0\omega(X))=0$ and $\Hom({}_0\omega(X),\omega(X))\neq 0$. At this point, $h(X)=m+r-1$ implies $X_0 = {}_0\omega(X)$, see e.g.\ the Hom hammock computations of \cite[\S2]{BPP}. The octahedral axiom applied to the composition of $X \to X_0$ and $X_0 \to \omega(X)$ yields a triangle $E[1] \to Q_X \to E[r]$. Since $E$ is exceptional, the connecting morphism $E[r]\to E[2]$ has to vanish, hence $E[1] \to Q_X \to E[r]$ splits as claimed.
(For a systematic computation of cones such as $Q_X$ in $\Db(\Lambda(r,n,m))$, see \cite[Appendix]{Broomhead}.)

Hence we get $\DD_X = {}\orth Q_X = {}\orth E$. In particular, $E$ exceptional shows that both conditions \SOT\ and \SOD\ are met; note $\Db(\Lambda(r,n,m))$ has a Serre functor because $\Lambda(r,n,m)$ is of finite global dimension if and only if $n>r$ --- an assumption we have made at the outset. Exactly the same reasoning works for $Y\in\ind\YY$ of height $n-r-1$.

If $r=1$, we use that $\Lambda(1,n,m) = A_m \tack \Lambda(1,n,0)$ is obtained by tacking $\A{m}$ onto the circular quiver algebra $\Lambda(1,n,0)$. Now $\Db(\Lambda(1,n,0))$ has up to shift a unique 0-spherical object $X'$ and the subgroup $\sod{[1],\tau} \subset \Aut(\Db(\Lambda(1,n,m)))$ acts transitively on 0-spherelike objects. Hence the embedding
 $\Db(\Lambda(1,n,0)) \into \Db(\Lambda(1,n,m))$
maps $X' \to \tau^s X[t]$ for some $s,t\in\IZ$. Applying \hyref{Lemma}{lem:sphericals_under_equivalences} for the autoequivalence $\tau^s[t]$ and \hyref{Theorem}{thm:tacking-sphericals} show
\[ \Db(\Lambda(1,n,m))_X \cong \Db(\Lambda(1,n,m))_{\tau^s X[t]} \cong \Db(\kk\A{m-1}) \oplus \Db(\Lambda(1,n,0)) . \]

If $r>1$, then the spherical subcategory $\DD_X$ is not the derived category of a finite-dimensional $\kk$-algebra: $\DD_X\cong\Db(A)$ for a finite-dimensional $\kk$-algebra $A$ implies, by \hyref{Lemma}{lem:smooth_spherical_subcategories}, that $A$ has finite global dimension.
However, that contradicts \hyref{Lemma}{lem:negative-spheres} and the fact that $X\in\DD_X$ is $(1-r)$-spherical.

In order to compute $\DD_Y$, note that $\Lambda(r,n,m)$ is an $\A{n-r-1}$-insertion of $\Lambda(r,r+1,m)$ at the successor of the trivalent vertex. Up to shift, there is a unique $(1+r)$-spherical object $Y'$ in $\Db(\Lambda(r,r+1,m))$. The embedding $\Db(\Lambda(r,r+1,m))\into\Db(\Lambda(r,n,m))$ maps $Y'\mapsto \tau^sY[t]$ for integers $s, t$, since the subgroup $\sod{[1], \tau} \subset \Aut(\Db(\Lambda(r,n,m)))$ acts transitively on $(1+r)$-spherelike objects in $\Db(\Lambda(r,n,m))$. \hyref{Theorem}{thm:insertion-sphericals} shows that 
\[ \Db(\Lambda(r,n,m))_{\tau^sY[t]} \cong \Db(\kk\A{n-r-2}) \oplus \Db(\Lambda(r,r+1,m)) . \]
In particular, since $(\tau^s[t])(\Db(\Lambda(r,n,m))_Y) \cong \Db(\Lambda(r,n,m))_{\tau^sY[t]}$, as in the proof of \hyref{Lemma}{lem:sphericals_under_equivalences}, the claim about $\DD_Y$ follows.
\end{proof}

\begin{remark}
It is easy to compute the left orthogonal of the (shifted) asphericality $Q_X=E$, using the Hom hammock computations of \cite[\S2]{BPP}.
From
 $\Hom^\bullet(A,E) = \Hom^\bullet(E,\Naka A)^*
                   = \Hom^\bullet(\tau\inv E,A)^*$, 
we get ${}\orth E = \tau\inv E\orth$. By \cite[\S2]{BPP}, the right orthogonal of $\tau\inv E$ consists of the full $\YY$ component and strips of triangles of width $m+r-2$ at the bases of all $\XX$ components together with the lattice consisting of $\sod{X}$, i.e.\ extensions and shifts of $X$.

Regarding the spherelike object $Y$, the indecomposables of $\DD_Y$ look like those of $\DD_X$, with the roles of $\XX$ and $\YY$ reversed: $\XX\subset\DD_Y$ and the indecomposables of $\DD_Y$ at the bottom of $\YY^0$ make up triangles of width $n-r-2$.
\end{remark}

\begin{figure}
\tikzdiscreteX
\caption{ \label{fig:discrete_X0}
The $\XX^0$ part of the spherical subcategory of $X\in\Db(\Lambda(2,n,2))$.
}
\end{figure}

\begin{figure}
\tikzdiscreteZ
\caption{ \label{fig:discrete_Z0} 
The $\ZZ^0$ part of the spherical subcategory of $X\in\Db(\Lambda(2,n,2))$.}
\end{figure}

\begin{example} \label{ex:DDA_spherical_subcategory}
Let $r=m=2$ and $n>2$ arbitrary, and $X$ as before. Write $\DD = \Db(\Lambda(2,n,2))$ and note $[2]|_\XX=\tau^{-4}$ on objects. \hyref{Figure}{fig:discrete_X0} depicts $\DD_X\cap\XX^0$. The shaded regions are the various Hom hammocks; the darker region on the left hand is $\{A\in\XX^0 \mid \Hom(A,E) = \Hom(\tau\inv E[-1],A)^*\neq0\}$, and the one on the right is $\{A\in\XX^0 \mid \Hom(\tau\inv E,A)\neq0\}$.
The intersection $\DD_X\cap\XX^1$ is not shown; it looks similar to \hyref{Figure}{fig:discrete_X0} and contains the odd shifts of $X$.

The additive subcategory generated by unshaded objects of heights $3+4k$ with $k \in \IN$ in $\XX^0$ and $\XX^1$ is $\sod{X}$, the triangulated category generated by $X$.

\hyref{Figure}{fig:discrete_Z0} shows the intersection $\DD_X\cap\ZZ^0$. Again, the picture for $\DD_X\cap\ZZ^1$ looks similar. Finally, we have $\DD_X\cap\YY=\YY$. Altogether, we get a complete and explicit description of $\DD_X$ in this example.
\end{example}

\begin{theorem}
The spherelike poset of $\DD = \Db(\Lambda(r,n,m))$ is
\begin{equation*}
   \poset(\DD) = 
   \begin{cases}
      \{ \DD \}                               & \text{if } (r,n,m) = (1,2,0); \\
      \{ \DD > \DD_{X_1},\ldots,\DD_{X_{m+r}} \} & \text{if }  r = n-1 \text{ and } m+r > 1; \\
      \{ \DD > \DD_{Y_1},\ldots,\DD_{Y_{n-r}} \} & \text{if }  r=1, m=0, n\geq2; \\
      \{ \DD_{X_1},\ldots,\DD_{X_{m+r}},\DD_{Y_1},\ldots,\DD_{Y_{n-r}} \} & \text{else,}
   \end{cases}
\end{equation*}
where $X\in\XX$ and $Y\in\YY$ are fixed nilpotent spherelike objects and $X_i\coloneqq \tau^{i-1}X$ 
and $Y_i\coloneqq \tau^{i-1}Y$.
\end{theorem}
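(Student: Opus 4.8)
The plan is to combine the classification of indecomposable spherelike objects recalled before \hyref{Proposition}{prop:spherelikes_DDA} with the computation of the individual subcategories carried out there, and then to organise the result using the autoequivalences $[1]$ and $\tau$. First I would record, from \cite{BPP}, that the indecomposable spherelike objects are precisely the objects of $\XX$ of height $m+r-1$ and the objects of $\YY$ of height $n-r-1$, none lying in $\ZZ$. Since $\DD_{F[k]}=\DD_F$ and, by \hyref{Lemma}{lem:sphericals_under_equivalences}, $\DD_{\psi F}=\psi(\DD_F)$ for $\psi\in\Aut(\DD)$, and since $\sod{[1],\tau}$ acts transitively on the indecomposable spherelike objects of $\XX$ (respectively of $\YY$), as in the proof of \hyref{Proposition}{prop:spherelikes_DDA}, every spherical subcategory coming from $\XX$ equals some $\DD_{X_i}$ with $X_i=\tau^{i-1}X$, and every one coming from $\YY$ equals some $\DD_{Y_j}$ with $Y_j=\tau^{j-1}Y$. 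The relations $[r]|_\XX=\tau^{-m-r}$ and $[r]|_\YY=\tau^{n-r}$ give $\tau^{m+r}X=X[-r]$ and $\tau^{n-r}Y=Y[r]$, whence $\DD_{X_{i+m+r}}=\DD_{X_i}$ and $\DD_{Y_{j+n-r}}=\DD_{Y_j}$; so there are at most $m+r$ subcategories from $\XX$ and at most $n-r$ from $\YY$.

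Next I would split into the four cases according to which of $X,Y$ is spherical. By the recalled classification $X$ is spherical exactly when $m=0$ and $r=1$, i.e.\ $m+r=1$, and then the single class $\DD_{X_1}$ equals $\DD$ and is the maximal element of $\poset(\DD)$; likewise $Y$ is spherical exactly when $n=r+1$, i.e.\ $n-r=1$, and then $\DD_{Y_1}=\DD$. Reading off the possibilities: both spherical forces $(r,n,m)=(1,2,0)$ and $\poset(\DD)=\{\DD\}$; only $Y$ spherical is the range $r=n-1$ with $m+r>1$, where $\DD$ is the top and the proper subcategories are the $\DD_{X_i}$; only $X$ spherical is $r=1$, $m=0$, $n\geq3$, where $\DD$ is the top and the proper subcategories are the $\DD_{Y_j}$; and in the remaining range neither $X$ nor $Y$ is spherical, so $\DD\notin\poset(\DD)$ and all subcategories are proper (both families being nonempty there, as then $X$ and $Y$ are both \emph{properly} spherelike).

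It then remains to prove that the proper subcategories are pairwise distinct and incomparable, i.e.\ that they form an antichain of the stated cardinalities. Here I would use the description $\DD_X=\lorth E=(\tau\inv E)\orth$ from \hyref{Proposition}{prop:spherelikes_DDA} and its following remark, together with the Hom-hammock computations of \cite[\S2]{BPP} visualised in \hyref{Figure}{fig:discrete_X0}. Cross-incomparability between an $X$-type and a $Y$-type is immediate, since each $\DD_{X_i}$ contains all of $\YY$ but not all of $\XX$, whereas each $\DD_{Y_j}$ contains all of $\XX$ but not all of $\YY$. Incomparability among the $\DD_{X_i}$ reduces, after applying $\tau^{1-i}$ and using $\DD_{X_i}=\tau^{i-1}(\DD_X)$, to showing that the region $R\coloneqq\DD_X\cap\XX^0$ and its translate $\tau^k R$ are neither equal nor nested for $0<k<m+r$. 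Since $R$ is the complement in $\XX^0$ of the cone-shaped hammock of the exceptional object $E$, its minimal $\tau$-period is exactly $m+r$ and distinct translates overlap without inclusion; the same argument applies to the $\DD_{Y_j}$. This translate-incomparability, extracted from the BPP hammocks (and already on display in \hyref{Example}{ex:DDA_spherical_subcategory}), is the one step that requires genuine computation rather than bookkeeping. Combining these incomparabilities with the placement of $\DD$ as the unique maximum in the three spherical cases yields the four displayed posets.
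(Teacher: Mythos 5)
Your proposal is correct and follows essentially the same route as the paper's proof: classify the indecomposable spherelike objects via \cite{BPP}, use the action of $\sod{[1],\tau}$ together with $[r]|_\XX=\tau^{-m-r}$, $[r]|_\YY=\tau^{n-r}$ to reduce to the $m+r$ (resp.\ $n-r$) translates $\DD_{X_i}$, $\DD_{Y_j}$, split into cases according to which of $X,Y$ is spherical, and check incomparability. In fact you supply more detail than the paper does on the one nontrivial point --- that distinct $\tau$-translates of $\DD_X$ (resp.\ $\DD_Y$) are pairwise incomparable, which the paper asserts with a bare ``hence'' --- and your reduction via $\DD_X={}\lorth E$ and the $\tau$-periodicity of the hammocks is a sound way to close that gap (nesting of translates plus $\tau^{m+r}$-periodicity already forces equality, so only distinctness needs the hammock or exceptional-object computation).
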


\begin{proof}
Fix nilpotent spherelike objects $X\in\XX$ and $Y\in\YY$. They are both spherical if and only if $m=0$, $r=1$ and $n=2$. In this case, the poset $\poset(\DD)$ is reduced to one element.

Now $Y$ is spherical if and only if $n=r+1$. Given this, $X$ is properly spherelike if and only if $m>0$ or $r>1$; this is equivalent to $m+r>1$.
The spherical subcategory of $Y$ is $\DD=\DD_Y$ and $X, \tau X, \ldots, \tau^{m+r-1}X$ are $m+r$ pairwise incomparable properly spherelike objects, hence $\DD_Y > \DD_{\tau^i X}$.

Next, $X$ spherical and $Y$ properly spherelike is equivalent to $r=1, m=0$ and $n>1$. Here, the top element is $\DD = \DD_X$ and the botton elements are the spherical subcategories of $Y_1,\ldots,Y_{n-r}$ which differ by consecutive $\tau$-translations, hence are incomparable.

Finally, we have the `generic' case, when $\DD$ has no spherical objects. Here, we have the mutually incomparable spherelike objects $X_1,\ldots,X_{m+r}$ and $Y_1,\ldots,Y_{n-r}$, i.e.\ $\poset(\DD)$ is an antichain of cardinality $m+n$.
\end{proof}

The following corollary also holds for $r=n$, because then the $\XX$ components remain the same (and there are no $\YY$ components).

\begin{corollary} The $d$-spherelike poset of $\DD = \Db(\Lambda(r,n,m))$ has cardinality
\[  \#\posett_d(\DD) =
    \begin{cases}
       m+r, & \text{if } d=1-r; \\
       n-r, & \text{if } d=1+r; \\
       0    & \text{otherwise}.
    \end{cases}
\]
In particular, the parameters $r,n,m$ are determined by the integer-indexed sequence
 $(\#\posett_d(\Lambda(r,n,m)))_{d\in\IZ}$,
making this sequence a complete derived invariant for derived-discrete algebras.
\end{corollary}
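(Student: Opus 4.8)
The plan is to read the cardinalities off the preceding theorem while keeping careful track of the homological degrees. The input is the classification of spherelike objects recalled at the start of this section: every indecomposable spherelike object of $\DD$ lies in an $\XX$- or a $\YY$-component, since the $\ZZ$-components consist solely of exceptional objects by \cite[Prop.~5.4]{BPP}. Moreover such an object is $(1-r)$-spherelike when it sits in $\XX$ and $(1+r)$-spherelike when it sits in $\YY$. Because $r\geq1$, the two degrees $1-r$ and $1+r$ are always distinct, so no single spherelike object contributes to two different values of $d$.

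First I would combine this with the preceding theorem. That theorem exhibits $\poset(\DD)$ as the spherical subcategories $\DD_{X_1},\dots,\DD_{X_{m+r}}$ coming from $\XX$ together with $\DD_{Y_1},\dots,\DD_{Y_{n-r}}$ coming from $\YY$, where in the presence of a spherical object the top element $\DD$ coincides with the single subcategory in the relevant family. Restricting $\poset(\DD)$ to its $d$-spherelike members therefore gives $\posett_{1-r}(\DD)=\{\DD_{X_1},\dots,\DD_{X_{m+r}}\}$ and $\posett_{1+r}(\DD)=\{\DD_{Y_1},\dots,\DD_{Y_{n-r}}\}$, of cardinalities $m+r$ and $n-r$, while $\posett_d(\DD)=\varnothing$ for all other $d$. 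The one point needing care is the degenerate case $(r,n,m)=(1,2,0)$, in which $\DD$ is simultaneously $0$-spherical and $2$-spherical; here $\posett_0(\DD)=\posett_2(\DD)=\{\DD\}$, and one checks $m+r=1=n-r$, so the stated formula persists.

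For the final assertion I would invert the formula. Since $r\geq1$ and $n>r$, both $m+r$ and $n-r$ are strictly positive, so the sequence $(\#\posett_d(\DD))_{d\in\IZ}$ has exactly two nonzero entries, located at $d_-=1-r<1+r=d_+$. Their positions recover $r=(d_+-d_-)/2$, and then $m=\#\posett_{d_-}(\DD)-r$ and $n=\#\posett_{d_+}(\DD)+r$. Thus the sequence determines $(r,n,m)$, which is the derived normal form of \cite{BGS}; as the sequence is itself a derived invariant by \hyref{Lemma}{lem:sphericals_under_equivalences}, it is a complete derived invariant for discrete derived algebras.

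The main (indeed only) obstacle is the bookkeeping in the cases where a spherical object is present: one must verify that the whole category $\DD$, when it belongs to $\poset(\DD)$, is attributed to exactly the right degree (or, in the $(1,2,0)$ case, to both) and does not spoil the counts $m+r$ and $n-r$. The inequality $1-r\neq 1+r$ for $r\geq1$ is precisely what separates the $\XX$- and $\YY$-families into different degrees and makes this bookkeeping routine.
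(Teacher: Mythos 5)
Your proposal is correct and follows exactly the route the paper intends: the corollary is immediate from the preceding theorem together with the classification of spherelike objects (those in $\XX$ are $(1-r)$-spherelike, those in $\YY$ are $(1+r)$-spherelike, and $\ZZ$ contains only exceptionals), with $1-r\neq 1+r$ separating the two families and the case $(r,n,m)=(1,2,0)$ checked separately. The only nitpick is phrasing: in that degenerate case it is not $\DD$ but the objects $X$ and $Y$ that are $0$- and $2$-spherical, with $\DD_X=\DD_Y=\DD$; your counts are nonetheless right.
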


\begin{corollary}
There are three possibilities for the stable spherelike poset $\posetaut(\DD)$ of $\DD=\Db(\Lambda(r,n,m))$:

\noindent
\begin{tabular}{ll}
$\posetaut = \{\bullet\}$           & if all spherelike objects are spherical, $(r,n,m)=(1,2,0)$; \\
$\posetaut = \{\bullet~~ \bullet\}$ & if there are no spherical objects; \\
$\posetaut = \{\bullet < \bullet\}$ & if spherical and properly spherelike objects exist.
\end{tabular}
\end{corollary}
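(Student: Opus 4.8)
The plan is to obtain $\posetaut(\DD)=\poset(\DD)/\Aut(\DD)$ directly from the explicit description of $\poset(\DD)$ in the preceding Theorem, by determining which of the listed spherical subcategories are identified under autoequivalences. The crucial autoequivalence is the AR-translation $\tau$: by \hyref{Lemma}{lem:sphericals_under_equivalences} we have $\DD_{\tau F}=\tau(\DD_F)$, and since the theorem records $X_i=\tau^{i-1}X$ and $Y_i=\tau^{i-1}Y$, the entire family $\DD_{X_1},\ldots,\DD_{X_{m+r}}$ forms a single $\Aut(\DD)$-orbit, and likewise $\DD_{Y_1},\ldots,\DD_{Y_{n-r}}$ collapses to one orbit. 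The maximal element $\DD$ (the whole category, which lies in $\poset(\DD)$ precisely when a spherical object exists) is fixed by every autoequivalence and so forms its own singleton orbit.

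Next I would check that the $X$-orbit and the $Y$-orbit are genuinely distinct. Any autoequivalence commutes with the shift and with the Serre functor, hence preserves the graded algebra $\Hom^\bullet(F,F)$ and in particular the spherelike dimension $d$. Since $X$ is $(1-r)$-spherelike and $Y$ is $(1+r)$-spherelike with $r\geq1$, one has $1-r<1+r$, so no autoequivalence can carry the $X$-class to the $Y$-class. This invariance of $d$ is the only structural input of the argument.

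It then remains to match the four cases of the preceding Theorem to the three claimed shapes. For $(r,n,m)=(1,2,0)$ one has $\poset(\DD)=\{\DD\}$, so $\posetaut=\{\bullet\}$; this is exactly the regime where every spherelike object is spherical. In the two cases where precisely one of $X,Y$ is spherical --- namely $r=n-1$ with $m+r>1$, and $r=1,m=0,n\geq2$ --- the poset is a top element $\DD$ sitting strictly above a single nontrivial orbit of properly spherelike subcategories, giving $\posetaut=\{\bullet<\bullet\}$; equivalently, both spherical and properly spherelike objects exist. In the remaining generic case there is no spherical object, $\poset(\DD)$ is the discrete union of the $X$-family and the $Y$-family, and these collapse to two incomparable orbits, so $\posetaut=\{\bullet~~\bullet\}$.

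I expect no real obstacle, since the whole proof is a reduction modulo the transitive $\tau$-action together with the $d$-invariance noted above. The one point to confirm in passing is that an indecomposable spherelike object of each type always exists --- which holds because the relevant heights $m+r-1$ and $n-r-1$ are both nonnegative once $n>r\geq1$ --- so that in the generic case the two orbits are indeed nonempty and the outcome is a genuine two-element antichain.
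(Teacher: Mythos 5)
Your proof is correct and follows essentially the same route as the paper: collapse each of the $X$- and $Y$-families to a single orbit via the $\tau$-action, keep the two orbits apart, and read off the three shapes from the case analysis of the preceding theorem. The only cosmetic difference is that the paper justifies the incomparability of the two orbits by citing the explicit descriptions in Proposition~\ref{prop:spherelikes_DDA}, whereas you use the invariance of the spherelike dimension $d$ under autoequivalences together with the discreteness already recorded in the theorem --- both are valid.
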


\begin{proof}
As we look at the spherelike poset up to autoequivalences, all spherelike objects in $\XX$ become identified under AR-translation; same for $\YY$. If both $\XX$ and $\YY$ possess properly spherelike objects, then the associated spherical subcategories are incomparable by \hyref{Proposition}{prop:spherelikes_DDA}, leading to $\posetaut = \{\bullet~~ \bullet\}$. The other cases are obvious.
\end{proof}

\begin{corollary}
Let $\Lambda$ be an arbitrary finite-dimensional algebra and let $\DD \coloneqq \Db(\Lambda)$.
If $\posetaut(\DD)$ has cardinality greater than two, or if the height of $\poset(\DD)$ is greater than two, then $\Lambda$ is not a derived-discrete algebra.
\end{corollary}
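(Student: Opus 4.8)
The plan is to prove the contrapositive: for \emph{every} discrete derived algebra both invariants stay small, so exceeding either threshold forces $\Lambda$ to be non-discrete. The two facts I would combine are that the spherelike poset is a derived invariant (\hyref{Lemma}{lem:sphericals_under_equivalences}) and that, by Bobi\'nski--Gei\ss--Skowro\'nski \cite{BGS}, the discrete derived algebras are classified up to derived equivalence by the normal forms $\Lambda(r,n,m)$.

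First I would invoke the classification: if $\Lambda$ is a discrete derived algebra, there is a triangulated equivalence $\varphi\colon \Db(\Lambda)\isom\Db(\Lambda(r,n,m))$ for some $1\leq r<n$ and $m\geq0$. By \hyref{Lemma}{lem:sphericals_under_equivalences}, $\varphi$ sends spherelike objects to spherelike objects and satisfies $\varphi(\DD_F)=\DD'_{\varphi(F)}$; hence it induces an isomorphism of posets $\poset(\Db(\Lambda))\cong\poset(\Db(\Lambda(r,n,m)))$. As this isomorphism intertwines the two $\Aut$-actions, it descends to an isomorphism of stable spherelike posets $\posetaut(\Db(\Lambda))\cong\posetaut(\Db(\Lambda(r,n,m)))$.

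Next I would read off the bounds from the two preceding results. The Theorem computing $\poset(\Db(\Lambda(r,n,m)))$ exhibits in each of its four cases a poset of height at most $2$: the case $(r,n,m)=(1,2,0)$ and the generic case are antichains, whereas in the two intermediate cases every nontrivial chain has the shape $\DD>\DD_{X_i}$ or $\DD>\DD_{Y_j}$. Similarly, the Corollary listing the three possibilities for $\posetaut(\Db(\Lambda(r,n,m)))$ shows its cardinality is always at most $2$. Transporting these bounds through the isomorphisms of the previous paragraph, every discrete derived algebra $\Lambda$ satisfies $\#\posetaut(\Db(\Lambda))\leq2$ and $\poset(\Db(\Lambda))$ has height at most $2$, which is precisely the contrapositive of the claim.

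The argument is short because all the substantive work has already been done; the one point needing care is that $\Lambda$ need not itself be a normal-form algebra but only derived equivalent to one, so the conclusion must be obtained via the invariance \hyref{Lemma}{lem:sphericals_under_equivalences} rather than by any direct poset computation with $\Lambda$ itself.
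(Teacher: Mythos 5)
Your argument is correct and is essentially the paper's own (the corollary is stated there without proof, as an immediate consequence of the preceding theorem and corollary, transported along derived equivalences via \hyref{Lemma}{lem:sphericals_under_equivalences}). The only point to tighten is the appeal to the classification: a discrete derived algebra need not be derived equivalent to some $\Lambda(r,n,m)$ with $1\leq r<n$ --- it may instead be piecewise hereditary of Dynkin type, where $\poset=\varnothing$ by \hyref{Example}{ex:hereditary}, or derived equivalent to a normal form with $r=n$, where the $\XX$ and $\YY$ components and hence the spherelike objects are unchanged --- but in these residual cases the bounds on height and cardinality hold trivially, so the conclusion stands.
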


\begin{remark}
Note that $\DD_X$ and $\DD_Y$ are the left orthogonal complements of certain exceptional objects; in particular, they form parts of a recollement of
$\Db(\Lambda)$ with their respective complements. There is an interesting contrast if we take some indecomposable object $Z\in\ZZ$ (which is always exceptional) instead: by \cite[Prop.~6.5]{BPP}, the orthogonal complement of $Z$ is the derived category of an iterated tilted algebra of type $A_{n+m-1}$; more precisely, there is a recollement $\recoll{ \Db(\kk A_{n+m-1})}{\Db(\Lambda(r,n,m))}{\sod{Z}}$. Note that $\Db(\kk\A{n})$ does not contain any nilpotent spherelike objects.
\end{remark}

\section{A non-commutative curve}
\label{sec:noncomm-curve}

\noindent
In \hyref{Example}{ex:non-commutative_curve}, we present a 3-spherelike object over a finite-dimensional algebra whose associated spherical subcategory is not of the form $\Db(\Lambda)$ for a finite-dimensional algebra $\Lambda$.

This object has a natural interpretation as a sheaf supported on the singularity  of a non-commutative nodal cubic curve. There is a dual 3-spherelike object with similarly peculiar spherical subcategory. However, the intersection of these two spherical subcategories has a clear interpretation, both algebraically and geometrically.

\begin{example} \label{ex:non-commutative_curve}
 We give an example of a $3$-spherelike object, such that the spherical subcategory is neither the derived category nor the perfect derived category of a finite-dimensional algebra. 
Consider the algebra $\Lambda = \Lambda_1$ from \hyref{Section}{sec:derived-invariant} given as
\[
\kk \Big( \xymatrix@C=1.3em{
          1 \ar@<0.5ex>[r]^a \ar@<-0.5ex>[r]_b &
          2 \ar@<0.5ex>[r]^a \ar@<-0.5ex>[r]_b & 3 }\Big) / (a^2,b^2)
\]
and the following $\Lambda$-module $E$
\[
\xymatrix@C=1.3em{
          \kk \ar@<0.5ex>[r]^0 \ar@<-0.5ex>[r]_1 &
          \kk \ar@<0.5ex>[r]^1 \ar@<-0.5ex>[r]_0 & \kk .}
\]
Then $E$ is an exceptional object of $\Db(\Lambda)$ and, moreover, $(\tau^{-1}E, E)$ forms an exceptional $(1,3)$-cycle in the sense of \cite{BPP}, i.e.\ $\nu E \cong (\tau^{-1}E)[3]$ (see e.g.\ \cite[Lem.~5.4]{BK}) and $\nu(\tau^{-1}E) \cong E[1]$ by definition of the AR-translation $\tau=\nu \circ [-1]$. There is an Auslander--Reiten triangle
 $E \to F \to \tau^{-1}E$ and by \cite[Prop.~3.7]{BPP}, $F$ is $3$-spherelike. Its asphericality can be computed to be $Q_F=E[1] \oplus E[-2]$.
In particular, $\Db(\Lambda)_F = {}\orth E$.

We use a K-theoretic argument first observed by Bondal, see e.g.~\cite{Kuznetsov}. Computing the Euler form $\chi(\blank,\blank)$ on $K(\Lambda)$ in the basis $S(1),S(2),S(3)$, and its restriction to the subgroup ${}\orth[E]$ gives
\[ \chi = \begin{pmatrix*}[r] 1 & -2 & 2 \\ 0 & 1 & -2 \\ 0 & 0 & 1 \end{pmatrix*} 
   \quad \text{and} \quad
   \chi|_{{}\orth[E]} = \begin{pmatrix*}[r] 0 & 1 \\ -1 & 0 \end{pmatrix*} ,
\]
where ${}\orth[E] = \genby{ [S(1)]+[S(2)], [S(2)]+[S(3)] }$. Therefore ${}\orth E$ has an anti-symmetric Euler form and consequently cannot be of the form $\Kb(\projectives A)$ or $\Db(A)$ for a finite-dimensional algebra $A$, since $\chi([A], [A])= \dim_\kk A \neq 0$. 
 
As $E$ is exceptional and $\Lambda$ has finite global dimension, condition \SOD\ and therefore also condition \SOT\ are satisfied. 

\addtocontents{toc}{\protect\setcounter{tocdepth}{1}}   
\subsection{Interpretation in non-commutative geometry}

Burban \& Drozd \cite[\S7]{BD} have shown that $\Lambda$ is derived equivalent to a non-commutative nodal cubic curve $\mathbb{X} \coloneqq (X, \mathcal{A})$, where $X \subset \mathbb{P}^2$ is a nodal cubic curve and $\AA=\EE nd_X(\OO_X\oplus\JJ)$ is a sheaf of $\OO_X$-algebras; here $\JJ\subset\OO_X$ is the ideal sheaf of the singularity. This equivalence $\mathbb{T}^{-1}\colon \Db(\Lambda) \to \Db(\mathbb{X})$ sends the $\Lambda$-module $E$ to the exceptional simple $\AA$-module $\SS_\gamma$ supported on the singular point of $X$, see \cite[Prop.~12]{BD}. 

Since the shift $[1]$ and the Nakayama (Serre) functor $\nu$ are equivalences, $F' \coloneqq \nu F [-2]$ is $3$-spherelike with asphericality $Q_{F'}=\nu E[-1] \oplus \nu E[-4] \eqqcolon E'[1] \oplus E'[-2]$. One can compute that $E'$ is the following exceptional $\Lambda$-module
\[
\xymatrix@C=1.3em{
          \kk \ar@<0.5ex>[r]^1 \ar@<-0.5ex>[r]_0 &
          \kk \ar@<0.5ex>[r]^0 \ar@<-0.5ex>[r]_1 & \kk }
\]
which corresponds to the other exceptional simple $\AA$-module $\SS_\alpha$ supported on the singular point of $X$, see \cite[Prop.~12]{BD}.

We claim that the intersection $\Db(\Lambda)_F \cap \Db(\Lambda)_{F'} \subseteq \Db(\Lambda)$ of the corresponding spherical subcategories is the full subcategory $\II$ of objects which are 
invariant under the Auslander--Reiten translation $\tau$. By \cite[Cor.~6]{BD} this subcategory corresponds to the category of perfect complexes $\Perf(X)$ on $X$ sitting inside the derived category of $\mathbb{X}$.

Since $E$ and $E'$ are $\frac{4}{2}$-fractionally Calabi--Yau (see e.g.\ \cite[Lem.~5.4]{BK}) and objects in $\II$ are $\smash{\frac{1}{1}}$-fractionally Calabi--Yau by definition, \cite[Lem.~5.3]{BK} provides the inclusion
$\II \subseteq {}\orth E \cap {}\orth E' = \Db(\Lambda)_F \cap \Db(\Lambda)_{F'}$. To see the other inclusion, we consider the image $q(T)$ of $T \in {}\orth E \cap {}\orth E'$ under the canonical projection
$q \colon \Db(\Lambda) \to \Db(\Lambda)/\II=:\mathcal T$. It suffices to show $q(T) \cong 0$. Firstly, for all $i \in \mathbb{Z}$
\begin{align*}
\Hom_\mathcal T(q(T), q(E[i] \oplus E'[i])) \cong \Hom_{\Db(\Lambda)}(T, E[i] \oplus E'[i]) =0 
\end{align*}
using \cite[proof of Cor.~5.5]{BK} in the first step and the definition of $T$ in the second. Applying \cite[Thm.~5.6]{BK} in combination with \cite[Thm.~4.8(a)]{BK}, the indecomposable objects in the idempotent completion $\mathcal T^\omega$ of $\mathcal T$ are completely understood. In combination with \cite[proof of Cor.~5.5]{BK}, this can be used to show that ${}\orth q(E \oplus E') = 0$. Hence $q(T) \cong 0$ which completes the argument.

To sum up, we have the following picture

\medskip
\noindent
\begin{tikzpicture}[description/.style={fill=white,inner sep=2pt}]
    \matrix (n) [matrix of math nodes, row sep=1em,
                 column sep=0.8em, text height=1.5ex, text depth=0.25ex,
                 inner sep=0pt, nodes={inner xsep=0.3333em, inner
ysep=0.3333em}]
    {  
      &             \Db(\Lambda)                         &&&&&& \Db(\mathbb{X}) \\ \\
      \Db(\Lambda)_F && \Db(\Lambda)_{F'} &&&&  {}\orth \mathcal S_\gamma && {}\orth \mathcal S_\alpha  \\ \\
     & \II &&&&&& \Perf(X) \\
     & \Db(\Lambda)_F \cap \Db(\Lambda)_{F'} &&&&&& {}\orth \mathcal S_\gamma \cap {}\orth \mathcal S_\alpha  \\  
    };

\path[right hook-> ] (n-3-1) edge (n-1-2);
\path[left hook-> ] (n-3-3) edge (n-1-2);

\path[left hook-> ] (n-5-2) edge (n-3-1);
\path[right hook-> ] (n-5-2) edge (n-3-3);

\path[-> ] (n-3-1) edge [bend left=10] node[ scale=0.75, fill=white] [midway] {$\nu \circ [-2]$} 
(n-3-3);
\path[-> ] (n-3-3) edge [bend left=10] node[ scale=0.75, fill=white] [midway] {$\nu \circ [-2]$} 
(n-3-1);

\draw[-] ($(n-5-2) + (-1pt, -7pt)$) edge
($(n-6-2)+ (-1pt, 7pt)$);
\draw[-] ($(n-5-2) + (1pt, -7pt)$) edge
($(n-6-2)+ (1pt, 7pt)$);



\path[right hook-> ] (n-3-7) edge (n-1-8);
\path[left hook-> ] (n-3-9) edge (n-1-8);

\path[left hook-> ] (n-5-8) edge (n-3-7);
\path[right hook-> ] (n-5-8) edge (n-3-9);

\path[-> ] (n-3-7) edge [bend left=10] node[ scale=0.75, fill=white] [midway] {$\nu \circ [-2]$} 
(n-3-9);
\path[-> ] (n-3-9) edge [bend left=10] node[ scale=0.75, fill=white] [midway] {$\nu \circ [-2]$} 
(n-3-7);

\draw[-] ($(n-5-8) + (-1pt, -7pt)$) edge
($(n-6-8)+ (-1pt, 7pt)$);
\draw[-] ($(n-5-8) + (1pt, -7pt)$) edge
($(n-6-8)+ (1pt, 7pt)$);


\path[-> ] ($(n-3-3.north)+(0, 18pt)$) edge [bend left=25] node[ scale=0.75, yshift=8pt] [midway] {$\mathbb{T}^{-1}$} node[ scale=0.75, yshift=-6pt] [midway] {$\sim$} 
($(n-3-7.north)+(0, 18pt)$);
\end{tikzpicture}
\end{example}

\begin{remark} \label{rem:Fukaya}
Versions of the above algebra $\Lambda$ come up in symplectic geometry: 
the very same algebra occurs in \cite[\S2C]{Seidel} as the double cover over a disk branched along 5 points ($g=1$). There, the algebra $\Lambda$ turns up with exactly the same relations $a^2=0, b^2=0$ as in \hyref{Example}{ex:non-commutative_curve}.

Moreover, by \cite[\S3A]{Seidel}, a version of it describes the directed Fukaya category of a Lefschetz pencil associated to $\OO(2)$ on $\IP^2$. In this reference, the relations are given by $a^2=b^2$ and $ab=ba$. For $\chara(k)\neq2$, the base change ($a\mapsto a-b$, $b\mapsto a+b$) presents the relation in the form of \hyref{Example}{ex:non-commutative_curve}.

In fact, all categories and algebras of \cite{Seidel} use $\IZ/2\IZ$ coefficients. Hence, the two algebras are not isomorphic. It would be interesting to see if the results of this section apply to the directed Fukaya category of \cite[\S3A]{Seidel}.

Finally, we mention that this is not the only appearance of gentle algebras in the context of Fukaya categories, see for example \cite{Haiden-etal} and \cite{Lekili-Polishchuk}.
\end{remark}

\begin{remark} \label{rem:cluster-tilting-connection}
We explain how the example above is related to the construction from cluster-tilting theory discussed in \hyref{Section}{sub:general}. The required definitions and properties of categories of maximal Cohen-Macaulay modules over isolated Gorenstein singularities can be found in \cite[\S1]{BIKR}.

Write $s \in X$ for the singular point and $\mathfrak{m}_s \subseteq \OO_s$ for the maximal ideal. We start with the following two rings, defined as completions of the (non-commutative and commutative) local rings at $s$: 
\[ A \coloneqq \varprojlim \AA_s/\mathfrak{m}_s^t \AA_s \qquad\text{and}\qquad
   R \coloneqq \widehat{\OO}_s = \varprojlim \OO_s/\mathfrak{m}_s^t \OO_s \cong \kk\llbracket x,y \rrbracket/(xy) \:, \]
By the proof of \cite[Thm.~2.6(2)]{BD},
\[ A = \End_R(R \oplus \mathfrak{m}_s) = \End_{R}(R \oplus \kk\llbracket x \rrbracket \oplus \kk\llbracket y \rrbracket) \]
is the Auslander algebra of the category of maximal Cohen-Macaulay $R$-modules $\mathsf{MCM}(R)$. This is a Frobenius category since $R$ is Gorenstein. Because $R$ has an isolated singularity, the stable category $\underline{\mathsf{MCM}}(R)$ is Hom-finite. Moreover, it is $0$-CY as $R$ is a curve singularity. Since $R$ is a hypersurface, $[2] \cong \mathsf{id}$ in $\underline{\mathsf{MCM}}(R)$ and thus this category is also $2$-CY. It has two $2$-cluster-tilting objects $\kk\llbracket x \rrbracket$ and $\kk\llbracket y \rrbracket$ by \cite[Prop.~2.4]{BIKR}. 

Let $B\coloneqq\End_R(R \oplus \kk\llbracket x \rrbracket)$ be the higher Auslander algebra. Then the cluster-tilted algebra
 $\underline{B}=\underline{\End}_R(R \oplus \kk\llbracket x \rrbracket)$ 
is isomorphic to $\kk$, see \cite[Prop.~2.4]{BIKR}. Therefore \hyref{Lemma}{lem:Keller-Reiten} applies to the simple $\underline{B}$-module $S$. This gives a $(2+1)$-spherical object $S \in \thick(\modules{\underline{B}}) \subseteq \Kb(\projectives{B})$. The triangulated category $\thick(\modules{\underline{B}})$ is generated by the spherical object $S$
--- Keller, Yang \& Zhou \cite{Keller-Yang-Zhou} describe
categories generated by $d$-spherical objects and show that they do not depend on the ambient triangulated category.

We collect the various categories and functors in a commutative diagram:

\medskip
\noindent
\begin{tikzpicture}[description/.style={fill=white,inner sep=2pt}]
    \matrix (n) [matrix of math nodes, row sep=0.7em,
                 column sep=0.8em, text height=1.5ex, text depth=0.25ex,
                 inner sep=0pt, nodes={inner xsep=0.3333em, inner
ysep=0.3333em}]
    {  
      \Kb(\projectives{B}) && \Kb(\projectives{A})  \\
      &&                                 \Db(A)                     && \Db(\mathbb{X}) && \Db(\Lambda)\\
      &&                                 \Db_{\rm fd}(A)        && \Db_{\{s\}}(\mathbb{X})\\
      \thick(\modules{\underline{B}}) && \thick(\modules{\underline{A}}) \\ \\
      \thick(S) && \thick(\widehat{(\SS_\alpha)}_s, \widehat{(\SS_\gamma)}_s) && \thick(\SS_\alpha, \SS_\gamma) && \thick(E', E) \\
      S && C
      && && F[1]
      \\  
    };

\path[right hook-> ] (n-1-1) edge node[ scale=0.75, yshift=8pt] [midway] {$Ae \otimes_B -$} (n-1-3);

\path[right hook-> ] (n-4-1) edge node[ scale=0.75, yshift=8pt] [midway] {$\mathbb{I}$} (n-4-3);

\path[right hook-> ] (n-6-1) edge node[ scale=0.75, yshift=8pt] [midway] {$\mathbb{I}$} (n-6-3);
\path[<- ] (n-6-3) edge node[ scale=0.75, yshift=5pt] [midway] {$\sim$} (n-6-5);
\path[<- ] (n-3-3) edge node[ scale=0.75, yshift=-5pt] [midway] {$\sim$} node[ scale=0.75, yshift=11pt] [midway] {$\widehat{(-)}_s$}(n-3-5);

\path[-> ] (n-6-5) edge node[ scale=0.75, yshift=5pt] [midway] {$\sim$} (n-6-7);

\path[-> ] (n-2-5) edge node[ scale=0.75, yshift=-5pt] [midway] {$\sim$} node[ scale=0.75, yshift=11pt] [midway] {$\mathbb{T}$}(n-2-7);

\path[|-> ] (n-7-1) edge (n-7-3);
\path[|-> ] (n-7-3) edge (n-7-7);


\path[right hook-> ] (n-4-1) edge node[ scale=0.75, xshift=-8pt] [midway] {$\iota$} (n-1-1);

\draw[-] ($(n-4-1) + (-1pt, -7pt)$) edge
($(n-6-1)+ (-1pt, 7pt)$);
\draw[-] ($(n-4-1) + (1pt, -7pt)$) edge
($(n-6-1)+ (1pt, 7pt)$);

\draw[-] ($(n-4-3) + (-1pt, -7pt)$) edge
($(n-6-3)+ (-1pt, 10pt)$);
\draw[-] ($(n-4-3) + (1pt, -7pt)$) edge
($(n-6-3)+ (1pt, 10pt)$);

\path[<-] (n-2-3) edge node[ scale=0.75, xshift=+9pt] [midway] {$\sim$}  (n-1-3);

\path[right hook-> ] (n-3-3) edge  (n-2-3);
\path[right hook-> ] (n-4-3) edge  (n-3-3);

\path[right hook-> ] (n-6-5) edge  ($(n-3-5)+(0, -10pt)$);
\path[right hook-> ] (n-6-7) edge  (n-2-7);

\path[right hook-> ] (n-3-5) edge  (n-2-5);
\end{tikzpicture}

Here, the idempotent $e \in A$ corresponds to the projection $A \onto R \oplus \kk\llbracket x\rrbracket$, and $\iota$ is induced by the projection $B \onto \underline{B}$, and
 $\underline{A}\coloneqq \underline{\End}_R(R \oplus \kk\llbracket x \rrbracket \oplus \kk\llbracket y \rrbracket)$. 
One can check that the composition $(Ae \otimes_B \blank) \circ \iota$ has image in $\thick(\modules{\underline{A}})$. This defines the inclusion $\mathbb{I} \colon \thick(\modules{\underline{B}}) \to \thick(\modules{\underline{A}})$. 

The right hand side of the diagram was already explained in \hyref{Example}{ex:non-commutative_curve} above. We note that
\[ C=\mathsf{cone}(\widehat{(\SS_\alpha)}_s[-1] \to \widehat{(\SS_\gamma)}_s[1]) \qquad\text{and}\qquad
   F[1]=\mathsf{cone}(E'[-1] \to E[1]) . \]
\end{remark}

\section{A tilted surface with spherelike poset of infinite width}

\begin{example} \label{ex:infinite_width}
Let $\DD = \Db(\Lambda)$ with the tensor algebra $\Lambda = \kk \tilde A_1\otimes_\kk \kk \tilde A_1$ of the Kronecker quiver
 $\smash{\tilde A_1} = \xymatrix@R=2ex{ 1 \ar@<0.5ex>[r] \ar@<-0.5ex>[r] & 2}$.
We show that $\poset(\DD)$ has infinite width and, particularly, infinite cardinality. 
 
Consider the idempotent $e=e_1 \otimes e_1 + e_1 \otimes e_2 \in \Lambda$. One can check that
 $e\Lambda e \cong \kk \tilde A_1 = \kk(\xymatrix@R=2ex{\bullet \ar@<0.5ex>[r] \ar@<-0.5ex>[r] & \bullet})$
and that $\Lambda/e \cong \Lambda (1 - e)$ is a projective $\Lambda$-module. This shows that the conditions of \hyref{Proposition}{prop:recollement} are satisfied.
 
 Thus $\jincl=\Lambda e \otimes_{e \Lambda e} (\blank)ﾊ\colon \Db(e\Lambda e) \to \Db(\Lambda)$ is an inclusion which has a right adjoint. Therefore we may apply \hyref{Theorem}{thm:projection_functor} to obtain a weak recollement
\[ \wrecoll{\thick(\modules {\Lambda/e}) \cap {}\orth\jincl(F_x)}{\DD_{\jincl(F_x)}}{\thick(\Lambda e)}, \]
where for each $x \in \IP^1$, $F_x$ denotes the corresponding quasi-simple $e \Lambda e$-module --- in other words, the modules sitting at the bottom of homogeneous tubes. Hence these are $1$-spherical objects.

For $y \in \IP^1$, let $G_y$ be the corresponding quasi-simple module over
 $\Lambda/e \cong \kk \tilde A_1 \cong \kk(\xymatrix@R=2ex{ \bullet \ar@<0.5ex>[r] \ar@<-0.5ex>[r] & \bullet})$. A computation in the homotopy category $\Kb(\projectives \Lambda)$ shows that $G_y \in {}\orth\jincl(F_x)$ if and only if $x \neq y$.
The weak recollement translates this to $G_y \in \Db(\Lambda)_{\jincl(F_x)}$ if and only if $x \neq y$. Consequently, the spherical subcategories $\smash{\Db(\Lambda)_{\jincl(F_x)}}$ are pairwise incomparable in $\DD$.

We remark that all these spherical subcategories $\smash{\Db(\Lambda)_{\jincl(F_x)}}$ become one element in $\posetaut(\Lambda)$, since $\Aut(\IP^1) \subseteq \Aut(\Db(\Lambda/e))$ acts transitively on tubes.

The above example corresponds to $\IP^1\times\IP^1$ since $\Db(\kk\tilde A_1)\cong\Db(\IP^1)$, where $F_x$ corresponds to $\pi^* \OO_p$ with $\pi \colon \IP^1\times\IP^1 \to \IP^1$ the projection to the first factor and $p \in \IP^1$. Here, by \cite[Ex.~5.7]{HKP}
\[ \Db(\IP^1\times\IP^1)_{\pi^* \OO_p} = \sod{\pi^* \Db_{\IP^1\setminus p}(\IP^1) \otimes \OO(-1,0),\pi^* \Db(\IP^1)} . \]
Now, let $X$ be $\IP^1\times\IP^1$ blown-up in $5$ points in general position, so $X$ is a del Pezzo surface of degree $3$. 
Since its anti-canonical bundle is ample, $\Aut(\Db(X))$ is generated by the shift, tensoring with line bundles and pullbacks via automorphisms; see \cite[Prop.~4.17]{Huybrechts}.
But by \cite{Koitabashi}, $\Aut(X)$ is trivial, so only tensoring with line bundles and the shift remain.

If we consider the composition $\tilde\pi\colon X \to \IP^1\times\IP^1 \xrightarrow{\pi} \IP^1$ and a point $p$ such that $\tilde\pi\inv(p)$ does not contain an exceptional divisor, then the spherical subcategory of $\tilde\pi^*\OO_p$ is essentially the same as above (where we add terms of the form $\OO_E(-1)$ for each exceptional divisor $E$; see \cite[\S5.2]{HKP}).
Since shift and twisting with a line bundle do not connect two different spherical subcategories of this kind (for they do not change the support), there is a subposet of infinite width in $\posetautt_1(X)$.
The surface $X$ admits a tilting bundle by \cite{Hille-Perling}, and so this geometric example also gives rise to a representation theoretic example.
\end{example}

\addtocontents{toc}{\protect\setcounter{tocdepth}{-1}}
\section*{Acknowledgements}

\noindent
We are grateful to Gustavo Jasso and Yanki Lekili for discussion and comments. 
Moreover, we thank Matthew Pressland for suggesting how to generalise \hyref{Lemma}{lem:Keller-Reiten} to \hyref{Remark}{rem:no-loops}. Finally we want to thank the anonymous referees for valuable comments.
Martin Kalck is grateful for the support by DFG grant Bu-1886/2-1 and EPSRC grant EP/L017962/1.

\setstretch{0.85}

\bigskip
\noindent
\resizebox{\textwidth}{!}{\emph{Contact:} \texttt{andreas.hochenegger@unimi.it, m.kalck@ed.ac.uk, david.ploog@ovgu.de}}

\end{document}